\documentclass[a4paper,11pt]{amsart}

\usepackage[utf8]{inputenc}		
\usepackage[T1]{fontenc}
\usepackage[english]{babel}

\usepackage{amsfonts}			
\usepackage{amsmath}
\usepackage{amssymb}
\usepackage{amsthm}

\usepackage[normalem]{ulem}

\usepackage{graphicx}			
\usepackage{tikz}
\usetikzlibrary{cd}

\usepackage{hyperref}			
\hypersetup{colorlinks=false, urlcolor=black, linkcolor=black}
\usepackage{cleveref}

\usepackage{enumitem}			

\usepackage{color}				
\usepackage{float}

\usepackage{thmtools}
\usepackage{thm-restate}

\usepackage{mathrsfs}

\newcommand{\Z}{\mathbb{Z}}						
\newcommand{\R}{\mathbb{R}}						
\newcommand{\C}{\mathbb{C}}						

\renewcommand{\S}{\mathbb{S}}					
\newcommand{\B}{\mathbb{B}}
\renewcommand{\H}{\mathbb{H}}					

\newcommand{\eps}{\varepsilon}					

\newcommand{\dd}								
{\mathop{}\!\mathrm{d}}						
\newcommand{\ddn}[1]							
{\mathop{}\!\mathrm{d^{#1}}}

\newcommand{\abs}[1]							
{\left| #1 \right|}
\newcommand{\smallabs}[1]						
{\lvert #1 \rvert}	
\newcommand{\norm}[1]							
{\left\lVert #1 \right\rVert}	
\newcommand{\smallnorm}[1]						
{\lVert #1 \rVert}						
\newcommand{\ip}[2]								
{\left< #1 , #2 \right>}

\DeclareMathOperator{\vol}{vol}					
\DeclareMathOperator{\spt}{spt}					

\DeclareMathOperator{\dist}{dist}

\let\Im\relax

\DeclareMathOperator{\Im}{Im}					

\newcommand{\loc}{\mathrm{loc}}

\newcommand{\cH}{\mathcal{H}}

\newcommand{\cX}{\mathcal{X}}

\renewcommand{\phi}{\varphi}

\def\Xint#1{\mathchoice
	{\XXint\displaystyle\textstyle{#1}}%
	{\XXint\textstyle\scriptstyle{#1}}%
	{\XXint\scriptstyle\scriptscriptstyle{#1}}%
	{\XXint\scriptscriptstyle\scriptscriptstyle{#1}}%
	\!\int}
\def\XXint#1#2#3{{\setbox0=\hbox{$#1{#2#3}{\int}$}
		\vcenter{\hbox{$#2#3$}}\kern-.5\wd0}}

\def\dashint{\Xint-}

\newtheorem{thm}{Theorem}[section]{\bf}{\it}
\newtheorem{lem}[thm]{Lemma}
\newtheorem{prop}[thm]{Proposition}
\newtheorem{cor}[thm]{Corollary}

{\bf}{\it}

{\bf}{\it}

{\bf}{\it}

{\bf}{\it}

\theoremstyle{definition}
\newtheorem{defn}[thm]{Definition}
\newtheorem{ex}[thm]{Example}
\newtheorem{example}[thm]{Example}
\newtheorem{setting}[thm]{Setting}

\theoremstyle{remark}

\numberwithin{equation}{section}

\begin{document}
\title[Values of finite distortion: Reshetnyak and Liouville]{Values of finite distortion: Reshetnyak's theorem, the Liouville Theorem, and the Lusin (N) -property}

\author[I. Kangasniemi]{Ilmari Kangasniemi}
\address{Department of Mathematics and Statistics, 
    P.O. Box 35 (MaD), FI-40014 University of Jyväskylä, Finland.
}
\email{ilmari.k.kangasniemi@jyu.fi}

\author[J. Onninen]{Jani Onninen}
\address{Department of Mathematics, Syracuse University, Syracuse,
    NY 13244, USA, and  Department of Mathematics and Statistics, P.O. Box 35 (MaD) FI-40014 University of Jyv\"askyl\"a, Finland
}
\email{jkonnine@syr.edu}

\author[Y. Zhu]{Yizhe Zhu}
\address{Department of Mathematics and Statistics, 
    P.O. Box 35 (MaD), FI-40014 University of Jyväskylä, Finland.
}
\email{yizhe.y.zhu@jyu.fi}

\subjclass[2020]{Primary 30C65; Secondary 35R45, 46E35.}
\date{\today}
\keywords{Mappings of finite distortion, Quasiregular values, Values of finite distortion, QR, MFD, VFD, Reshetnyak's theorem, Liouville theorem, Lusin N}
\thanks{I.K.\ was supported by the Research Council of Finland grant \#369659. J.O.\ was supported by the U.S.\ National Science Foundation grant DMS-2453853. Y.Z.\ was supported by the Research Council of Finland grant \#334014.}

\begin{abstract}
    Let $\Omega \subset \R^n$ be a domain and $f \in W^{1,n}_{\loc} (\Omega,\R^n) $. We say that $f$ has a \emph{value of finite distortion} at $y_0 \in \R^n$ if there exist measurable functions $K \colon \Omega \to [0,\infty) $ and $\Sigma \in L^1_{\loc} (\Omega)$ such that
    \[
        \abs{Df(x)}^n \le K(x) \det Df (x) + \Sigma(x)  \abs{f(x)-y_0}^n
        \quad \text{for a.e. } x \in \Omega.
    \]
    This notion unifies the classical theory of mappings of finite distortion with the recently introduced theory of quasiregular values. Under sharp integrability assumptions on $K$ and $\Sigma$, we establish single-value analogues of Reshetnyak’s theorem and the Liouville theorem. 
     We also prove that mappings satisfying a more general distortion inequality with defect preserve sets of Lebesgue measure zero.
\end{abstract}

\maketitle

\section{Introduction}

Let $\Omega \subset \R^n$, $n\ge2$, be a domain. A mapping $f\colon \Omega \to \R^n$ in $W^{1,n}_{\loc}(\Omega, \R^n)$ is said to
have \emph{finite distortion} if there exists a measurable function $K\colon \Omega\to[1,\infty)$ such that
\begin{equation}\label{eq: finite distortion}
    |Df(x)|^n\le K(x)J_f(x)
\end{equation}
for almost every $x\in\Omega$. Here $|Df(x)|$ denotes the operator norm
of the weak differential of $f$ at $x$, and $J_f(x)$ its Jacobian
determinant.

The special case in which $K$ is bounded gives rise to the class of
\emph{mappings of bounded distortion}, or \emph{quasiregular mappings}.
A fundamental theorem of Reshetnyak~\cite{reshetni_ak1989space,reshetnyak1968condition}
asserts that every quasiregular mapping is continuous and either
constant or open, discrete, and sense-preserving. In this way,
quasiregular mappings provide a natural higher-dimensional analogue of
holomorphic functions.

For general mappings of finite distortion, continuity of
$W^{1,n}_{\loc}$-mappings was established by Gol'dshtein and
Vodop'yanov~\cite{vodop1976quasiconformal}, and later extended to
Orlicz--Sobolev settings by Iwaniec, Koskela, and
Onninen~\cite{iwaniec2001mappings}. The corresponding openness and
discreteness theory is substantially more delicate. In the planar case,
Iwaniec and \v Sver\'ak~\cite{iwaniec1993mappings} proved that every
nonconstant mapping of finite distortion with
$K\in L^1_{\loc}(\Omega)$ is open and discrete. In dimensions
$n\ge3$, Manfredi and Villamor~\cite{manfredi1995mappings} showed
that the same conclusion holds under the assumption
$K\in L^p_{\loc}(\Omega)$ with $p>n-1$. The exponent $n-1$ is sharp,
as demonstrated by Ball's example~\cite{ball1981global}. At the
borderline exponent $p=n-1$, Hencl and
Rajala~\cite{henclrajala2013} constructed counterexamples showing
that, in general, the assumption $p>n-1$ cannot be removed when
$n\ge3$.

Another central result in the theory is the Liouville theorem:
bounded quasiregular mappings
$f\colon\R^n\to\R^n$ are constant
\cite{reshetnyak1967liouville}. In the planar case, this follows
immediately from the Sto\"ilow factorization theorem together with the
classical Liouville theorem of complex analysis. Martio, Rickman, and
V\"ais\"al\"a~\cite{manrro1970distortion} later proved that every
$K$-quasiregular mapping satisfying a suitable slow growth condition
must also be constant. Since then, numerous refinements and analogues
have been established, including Rickman's Picard
theorem~\cite{rickman1980picard} and various manifold-valued constancy theorems; see for instance, \cite{Varopoulos-SaloffCoste-Coulhon,troyanov2002liouville,Bonk-Heinonen_Acta,Prywes_Annals,Heikkila-Pankka_Elliptic}.

\subsection{Quasiregular values and values of finite distortion} \enlargethispage{\baselineskip}

Recently, a theory of \emph{quasiregular values} has emerged, providing
single-value analogues of several fundamental results in the theory of
quasiregular mappings. The origins of the theory trace back to a definition by Astala, Iwaniec, and Martin \cite[Section 8.5]{iwaniec2001geometric}. A systematic
framework was subsequently developed by Kangasniemi, Onninen, and
Heikkil\"a in a series of papers
\cite{kangasniemi2022heterogeneous,kangasniemi2024correction,
kangasniemi2025single,kangasniemi2024quasiregular,
kangasniemi2024linear,heikkila2025quasiregular}.

In this article, we consider an analogous generalization for  mappings of finite distortion.

\begin{defn}
    Let $\Omega\subset\mathbb{R}^n$ be a domain, let $y_0\in\mathbb{R}^n$, let $K \colon \Omega \to [1, \infty)$ be measurable, and let $\Sigma\in L^{1}_\loc(\Omega)$. Suppose that $f\in W^{1,n}_{\rm loc}(\Omega,\mathbb{R}^n)$. We say that $f$ has a \emph{value of finite distortion} at $y_0$ with data $(K, \Sigma)$ if
    \begin{align}\label{K,Sigma-quasiregular}
        |Df(x)|^n\le K(x)J_f(x)+\Sigma(x)|f-y_0|^n
    \end{align}
    for a.e.\ $x\in\Omega$. Alternatively, we say that $f$ satisfies the \emph{$(K, \Sigma)$-distortion inequality with defect} if
    \begin{align}\label{K,Sigma-distortion}
        |Df(x)|^n\le K(x)J_f(x)+\Sigma(x)
    \end{align}
    for a.e.\ $x \in \Omega$.
\end{defn}

Conditions of this type first appeared in \cite{dolevzalova2024mappings}.  Sharp continuity results for such conditions were recently established in \cite{kangasniemi2025continuity}:  If $K \in L^p_\loc(\Omega)$ and $\Sigma/K \in L^q_\loc(\Omega)$ with $p, q \in [1, \infty]$, then the solutions $f \in W^{1,n}_{\loc}(\Omega,\mathbb{R}^n)$ of \eqref{K,Sigma-quasiregular} or \eqref{K,Sigma-distortion} are continuous precisely when $p^{-1} + q^{-1} < 1$. Counterexamples in the complementary range  $p^{-1} + q^{-1} \ge 1$ were constructed in \cite{dolevzalova2024mappings}.

\subsection{Reshetnyak's theorem for values of finite distortion}
Our first main result is a Reshetnyak-type theorem for mappings
possessing a value of finite distortion. The corresponding theorem for quasiregular values was shown in
\cite[Theorem~1.2]{kangasniemi2025single}.  A generalization has since been shown for variable $K$ by D.\ Zhong \cite{Zhong_ReshetnyakGeneralization}, though under a relatively strong assumption on the local behavior of $K$ that for instance entirely disallows radial singularities.

\begin{thm}\label{open and discrete thm}
    Let $\Omega\subset\mathbb{R}^n$ be a domain, let $y_0\in\mathbb{R}^n$, and let $p, q \in [1, \infty]$. Suppose that
    $f\in W^{1,n}_{\rm loc}(\Omega,\mathbb{R}^n)$ has a value of finite distortion at $y_0\in\mathbb{R}^n$ with data $(K,\Sigma)$, where $K:\Omega\to[1,\infty)$ and $\Sigma:\Omega\to[0,\infty)$ are measurable functions such that
    \begin{equation}\label{K,Sigma condition}
        K\in L^p_{\rm loc}(\Omega),
        \quad{\rm and}\quad 
        \frac{\Sigma}{K}\in L^{q}_{\rm loc}(\Omega).
    \end{equation}
    If $p > n-1$ and $p^{-1} + q^{-1} < 1$, then either $f\equiv y_0$ a.e.\ in $\Omega$, or the continuous representative of $f$ satisfies the following conditions:
    \begin{itemize}[noitemsep]
        \item $f^{-1}\{y_0\}$ is a discrete subset of $\Omega$,
        \item at every $x_0\in f^{-1}\{y_0\}$ the local index $i(x_0,f)$ is positive, and
        \item for every neighborhood $U$ of a point $x_0\in f^{-1}\{y_0\}$, we have $y_0\in  {\rm int}\ f(U)$.
    \end{itemize}
\end{thm}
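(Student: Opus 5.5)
We would follow the strategy used for quasiregular values in \cite{kangasniemi2025single}, replacing the constant $K$ by the weighted theory of mappings of finite distortion in the spirit of Manfredi--Villamor. Without loss of generality $y_0 = 0$. Continuity of $f$ comes from \cite{kangasniemi2025continuity}, since $p^{-1}+q^{-1}<1$.

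\textbf{Step 1: a weighted subsolution for $\log\abs{f}$.} We would show that $u = \log\abs{f}$ (or a truncation $\max(\log \abs{f}, -M)$) is a supersolution or subsolution of a weighted $n$-Laplace type equation on $\Omega\setminus f^{-1}\{0\}$, up to an integrable error. Heuristically, $\abs{\nabla u}^n \le \abs{Df}^n/\abs{f}^n$. By \eqref{K,Sigma-quasiregular}, $\abs{Df}^n/\abs{f}^n \le K J_f/\abs{f}^n + \Sigma$, and $J_f/\abs{f}^n$ is the pullback of the closed form $\omega=\abs{y}^{-n}\, y\cdot dy^{\wedge}$ (the volume form of $\S^{n-1}$ pulled back by $y/\abs{y}$). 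Dividing by $K$ gives
\[
    \frac{\abs{Df}^n}{K\abs{f}^n} \le f^*\omega\text{-term} + \frac{\Sigma}{K}.
\]
Testing against $\eta^n$ and integrating by parts via the closedness of $f^*\omega$ (valid for $W^{1,n}$ maps) produces a Caccioppoli-type estimate for $\int \eta^n \abs{\nabla u}^n / K$. The error $\Sigma/K \in L^q$ is paired with the remaining factors through Hölder's inequality, which is where $p^{-1}+q^{-1}<1$ enters.

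\textbf{Step 2: the zero set is small and $u \to -\infty$ there.} Combining the estimate with $K\in L^p$, $p>n-1$, we would show that $\int_B \abs{\nabla u}^s < \infty$ for an $s$ depending on $p$, via $\abs{\nabla u}^s \le (\abs{\nabla u}^n/K)^{s/n}K^{s/n}$, and run a weighted Moser or capacity argument. The conclusion sought is: either $f\equiv 0$, or $f^{-1}\{0\}$ has zero $n$-capacity in a suitable weighted sense and in particular contains no continuum. Concretely, if $f^{-1}\{0\}$ had an accumulation point or a nontrivial component, we would derive a logarithmic capacity contradiction in the manner of the Reshetnyak proof of discreteness of fibers. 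The route is to show that $\{f=0\}$ being big forces $u\equiv -\infty$ on a ball, and then connectedness gives $f\equiv0$. This unique-continuation step is where $p>n-1$ is needed: it is the Manfredi--Villamor threshold making weighted $n$-capacity comparable to the unweighted capacity of continua, through the Sobolev embedding on spheres for $W^{1,s}$ with $s>n-1$.

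\textbf{Step 3: local index and openness at $y_0$.} Once $x_0$ is isolated in $f^{-1}\{0\}$, take a small ball $B$ around it with $0 \notin f(\partial B)$, so the degree $\deg(f,B,0)=i(x_0,f)$ is defined. Positivity follows from the degree formula: $J_f \ge K^{-1}(\abs{Df}^n - \Sigma\abs{f}^n)$, so $J_f\ge 0$ wherever $\abs{f}$ is small relative to $\abs{Df}$. We then apply the degree integral $\int_B \phi(f)J_f$, with $\phi$ supported in a tiny ball around $0$, and show it is positive. Nonnegativity is clear up to an error of order $\int \phi(f)\Sigma\abs{f}^n/K$, which is negligible against the main term. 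Strict positivity follows because $\abs{Df}\neq 0$ on a positive measure set near $x_0$, since $f$ is nonconstant near $x_0$ by Step 2. Finally, $i(x_0,f)>0$ implies $0 \in \operatorname{int} f(U)$ by the standard degree argument: $\deg(f,B,y)=\deg(f,B,0)\neq0$ for every $y$ in the component of $\R^n\setminus f(\partial B)$ containing $0$.

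\textbf{Main obstacle.} We expect Step 2 to be the hardest: proving that $f^{-1}\{0\}$ is discrete, rather than merely measure zero or totally disconnected, under only $K\in L^p$ with $p>n-1$. Our first attempt would be to adapt the Manfredi--Villamor argument. Fix $x_0$ in the zero set and show that the spherical oscillation of $u$ on $\partial B(x_0,r)$ is controlled for a.e.\ $r$, using Step 1 and Sobolev on spheres. Then sum over dyadic annuli to show $\log\abs{f}$ behaves like $i\log r$, forcing isolation.
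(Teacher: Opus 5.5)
The decisive gap is in Step 3, on strict positivity of the index.

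In your normalization $y_0=0$, take $\phi=\chi_{\B(0,\varepsilon)}$ and let $U_\varepsilon$ be the $x_0$-component of $f^{-1}\B(0,\varepsilon)$. Your inequality then reads $i(x_0,f)\,\omega_n\varepsilon^n\ge\int_{U_\varepsilon}\abs{Df}^n/K-\varepsilon^n\int_{U_\varepsilon}\Sigma/K$. Since $m_n(U_\varepsilon)\to0$, the last integral tends to $0$, so this does give $i(x_0,f)\ge0$ (essentially Lemma~\ref{non negative lemma}). Positivity, however, requires $\int_{U_\varepsilon}\abs{Df}^n/K\ge c\,\varepsilon^n$ with $c>0$ independent of $\varepsilon$, and nothing you say yields this. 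Knowing $\abs{Df}\ne0$ on a set of positive measure gives no rate. The data $f(x_0)=0$ and $\abs f=\varepsilon$ on $\partial U_\varepsilon$ give no lower energy bound either, since points have zero $n$-capacity. If the index were $0$, the main term would simply be bounded by the error term, so the claimed negligibility is exactly what has to be proved.

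In fact your reasoning uses only the pointwise inequality, continuity, nonconstancy and isolation of the zero. Example~\ref{ex:planar_counterexample} has all of these (there $f^{-1}\{0\}=\{0\}$), yet $i(0,f)=0$ because $f$ maps into the closed upper half-plane. So the hypothesis $p^{-1}+q^{-1}<1$ must enter in an essential way.

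What you are missing is the paper's exclusion of degree zero:
\begin{itemize}
\item Suppose a component $U$ of small measure has $\deg(f,U)=0$. By non-negativity and additivity, every component of $U\cap f^{-1}\B(0,t)$ has degree zero, so $\int_{U\cap\{\abs f<t\}}J_f=0$ for every $t$.
\item Integrating against $nt^{-n-1}\,dt$ gives $\int_U J_f/\abs f^n=0$ and $\int_U\abs{Df}^n/(K\abs f^n)\le\int_U\Sigma/K<\infty$.
\item Hence $\log\abs f\in W^{1,\beta}$ with $\beta=np/(p+1)$, it satisfies $\abs{\nabla\log\abs f}^n\le KJ_g+\Sigma$, and $J_g=J_f/\abs f^n$ integrates to zero over every sublevel set of $\abs f$ in $U$.
\item The refined $L^\infty$-estimate of Proposition~\ref{prop:Linfty_generalization}, which is where $p^{-1}+q^{-1}<1$ is used, then bounds $-\log\abs f$ on a subcomponent containing $x_0$. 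This contradicts $f(x_0)=0$.
\end{itemize}

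Step 2 does not work as proposed either, for three reasons.

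First, no Caccioppoli bound for $\int\eta^n\abs{Df}^n/(K\abs f^n)$ can hold with $\eta$ nonzero at a zero of $f$. For $f=\mathrm{id}$, $K\equiv1$, $\Sigma\equiv0$ it equals $\int\eta^n\abs x^{-n}=\infty$; in the paper, finiteness of this quantity is derived only under the counter-assumption of degree zero. This is why the paper works with $u_k=\min(\log\log\abs f^{-1},k)$, whose gradients are locally bounded in $L^\beta$ uniformly in $k$, with $\beta=pn/(p+1)>n-1$.

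Second, that bound yields zero $\beta$-capacity and $\mathcal H^1(f^{-1}\{0\})=0$, i.e.\ total disconnectedness only. Capacity estimates cannot give more, since a Cantor set can have zero capacity.

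Third, the classical passage from light to discrete needs lightness and sense-preservation at every value (which for mappings of finite distortion comes from $J_f\ge0$). Here only the value $0$ is controlled and $J_f$ may change sign. Your plan to show that $\log\abs f$ behaves like $i\log r$ also presupposes the index of a zero not yet known to be isolated.

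The paper proceeds in the opposite order. Positivity of the degree is proved for every small-measure component of $f^{-1}\B(0,\varepsilon)$ that contains a zero, with no isolation assumed (Lemma~\ref{lem: posotive degree}). Discreteness then follows: an accumulation point $x_0$ of $f^{-1}\{0\}$ would produce nested $x_0$-components $U_0\supset U_1\supset\cdots$ whose degrees strictly decrease through the positive integers. Each step splits off a component containing another zero, which therefore has positive degree.

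Your final deduction of $0\in\operatorname{int}f(U)$ from $i(x_0,f)>0$ is fine once isolation and positivity are available.
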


The condition $p^{-1}+q^{-1}<1$ is essential. Indeed, solutions of
\eqref{K,Sigma-quasiregular} need not even admit continuous
representatives when this assumption fails. Moreover, even continuous solutions with $p^{-1}+q^{-1}\ge1$ may fail to be open or sense-preserving at points of $f^{-1}\{y_0\}$; see
Example~\ref{ex:planar_counterexample}.
We also note that when $p^{-1} + q^{-1} < 1$, \eqref{K,Sigma condition} yields that $\Sigma \in L^r_\loc(\Omega)$ with $r = (p^{-1} + q^{-1})^{-1} > 1$. Such higher integrability of $\Sigma$ is necessary, as demonstrated by the example given in \cite[Example 8.1]{kangasniemi2025single}. 

The assumption that $K\in L_{\loc}^p (\Omega)$ with $p>n-1$ is also necessary. For $n \ge 3$, this is already required in the classical case  $\Sigma \equiv 0$; see~\cite{henclrajala2013}. For $n = 2$, although Reshetnyak's theorem remains valid for mappings of finite distortion when $p = 1$, the fundamental difference in our setting is that if $p = 1$, then there exists no $q \in [1, \infty]$ with $p^{-1} + q^{-1} < 1$; for a counterexample with $p = 1$ and $q = \infty$, see again Example \ref{ex:planar_counterexample}.

Finally, one cannot expect Reshetnyak-type conclusions at the preimages of values  $y\neq y_0$, nor for solutions of the defect inequality \eqref{K,Sigma-distortion}. For instance, the mapping  $f(x)=(x_1,0,...,0)$, satisfies the $(K,1)$-distortion inequality with defect for any constant $K\ge 1$, yet $f$ is neither
constant nor discrete or open.  Further counterexamples are given in  \cite[Example 8.4]{kangasniemi2025single}, where a mapping $f \colon \R^n \to \R^n$ is constructed that has a $(1, \Sigma)$-quasiregular value at $y_0 = 0$ with a constant $\Sigma$, has no values of finite distortion with a locally integrable $\Sigma$ at other $y \in \R^n \setminus \{0\}$, and fails to be sense preserving at the preimages of any $y \ne 0$.

\subsection{The Liouville theorem for values of finite distortion} 
Our second main result is a Liouville-type theorem for values of finite distortion. Given a measurable function
$K\colon \R^n\to[1,\infty)$ and $p>0$, we define
\begin{equation}\label{ineq: K_n}
    \mathcal{K}_{p}
    :=\sup_{R\ge 1}\dashint_{\B(0, R)} K^{p}.
\end{equation}
Under the assumption $\mathcal K_{n-1}<\infty$, we obtain the
following  generalization of the Liouville theorem for quasiregular values
proved in \cite[Theorem~1.2]{kangasniemi2022heterogeneous}.

\begin{thm}\label{thm:liouville}
    Let $y_0\in\mathbb{R}^n$, and let $p, q \in [1, \infty]$. 
    Suppose that $f\in W^{1,n}_{\rm loc}(\R^n,\mathbb{R}^n)$ has a value of finite distortion at $y_0\in\mathbb{R}^n$ with data $(K,\Sigma)$, where $K:\R^n\to[1,\infty)$ and $\Sigma:\R^n\to[0,\infty)$ are measurable functions such that 
    \begin{equation*}
        K\in L^p_{\rm loc}(\R^n),
        \quad{\rm and}\quad 
        \frac{\Sigma}{K}\in L^{q}_{\rm loc}(\R^n)\cap L^1(\R^n).
    \end{equation*}
    If $p^{-1} + q^{-1} < 1$, $\mathcal{K}_{n-1} < \infty$, and $f$ is bounded, then the continuous representative of $f$ satisfies either $f\equiv y_0$ or $y_0 \notin f(\R^n)$.
\end{thm}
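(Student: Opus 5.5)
We argue by contradiction. Suppose $f \not\equiv y_0$ and $y_0 \in f(\R^n)$. Translating, we may assume $y_0 = 0$; we may also assume $\abs{f} \le M$ on $\R^n$. A Liouville-type contradiction can then come from a global Caccioppoli-type estimate for a logarithmic quantity. The plan is to bound $\int_{\B(0,R)} \abs{D \log\abs{f}}^n / K^{n-1}$, or a closely related weighted energy, uniformly in $R$. Since $\abs{f}$ vanishes at a point, such a quantity should instead be forced to blow up. This mirrors the proof for quasiregular values in \cite{kangasniemi2022heterogeneous}, adapted to variable $K$ in the spirit of the Reshetnyak argument behind Theorem~\ref{open and discrete thm}.

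\textbf{Step 1: the pulled-back form.} Consider the pullback of a form such as $\omega = \abs{y}^{-n} \, \dd y_1\wedge\dots\wedge \dd y_n$, or more precisely of a $\log$-weighted version $\abs{y}^{-n}\abs{\log(\abs{y}/M')}^{-s}\,\mathrm{vol}$ with $M' > M$. By \eqref{K,Sigma-quasiregular},
\[
\frac{\abs{Df}^n}{\abs{f}^n} \le K \frac{J_f}{\abs{f}^n} + \Sigma .
\]
We test this against $\eta^n$, where $\eta$ is a cutoff equal to $1$ on $\B(0,R)$ and supported in $\B(0,2R)$, and divide by $K$.

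\textbf{Step 2: controlling the Jacobian term.} The integral of $\eta^n J_f\,\phi(\abs{f})$ can be rewritten, via Stokes for pullbacks of exact forms (legitimate since $f\in W^{1,n}_\loc$ is continuous by \cite{kangasniemi2025continuity}), as a boundary-type term involving $D\eta$. That term is estimated by Hölder by
\[
\Big(\int \frac{\abs{Df}^n}{K\abs{f}^n}\Big)^{(n-1)/n}\Big(\int K^{n-1}\abs{D\eta}^n\Big)^{1/n}.
\]
The last factor is bounded by $C\mathcal{K}_{n-1}$ uniformly in $R$, since $\abs{D\eta}^n \sim R^{-n}$ on a set of measure $\sim R^n$. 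The defect term contributes $\int \Sigma/K$, which is finite by the assumption $\Sigma/K\in L^1(\R^n)$. Absorbing, we get
\[
\int_{\R^n} \frac{\abs{Df}^n}{K\,\abs{f}^n}\,\psi(\abs f) < \infty
\]
for a suitable positive weight $\psi$.

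\textbf{Step 3: contradiction near a zero.} We now use the local theory to contradict this finiteness. Theorem~\ref{open and discrete thm} applies, using the local integrability $p^{-1}+q^{-1}<1$ and $p>n-1$, where the latter follows from $\mathcal K_{n-1}<\infty$ at least in the relevant range. It gives that a zero $x_0$ has positive local index and that $f$ is open at $x_0$. Hence $\log\abs f \to -\infty$ at $x_0$ while $\abs f>0$ nearby on small punctured balls. A capacity argument then shows that $\int_{B(x_0,r)} \abs{D\log\abs f}^n/K$, taken with the weight $K^{n-1}$ on the dual side, must be infinite: the condenser between $\{x_0\}$ and $\partial B$ has zero $K^{n-1}$-weighted capacity, because $K^{n-1}\in L^1_\loc$. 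If necessary, the scheme should be run with $\log$-powers to handle the $n$-capacity borderline.

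The main obstacle I expect is the absorption in Step 2 near the zero set, where $\abs{f}^{-n}$ is non-integrable. To handle it, we truncate: replace $\abs f$ by $\max(\abs f,\epsilon)$ and track the constants, which should be independent of $\epsilon$ by the global bounds. We then let $\epsilon\to0$ to conclude that the truncated energies stay bounded, contradicting the capacity blow-up from Step 3.
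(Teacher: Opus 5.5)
The gap is in Step~3, and it cannot be repaired within your scheme. Zero capacity of the condenser $(\{x_0\},B(x_0,r))$ is an \emph{upper} bound. It says there are functions that are large at $x_0$ yet have arbitrarily small energy, so it can never force the energy of $\log\abs{f}$ or $\log\log(1/\abs{f})$ to blow up. Concretely, take $f(x)=x$ and $K\equiv 1$, which has a zero of index $1$. Then $\int_{\B^n}\abs{Df}^n\abs{f}^{-n}\log^{-n}(e/\abs{f})\,dx=\int_{\B^n}\abs{x}^{-n}\log^{-n}(e/\abs{x})\,dx<\infty$; this finiteness is exactly what Lemma~\ref{Duk bounded lemma} exploits, in the opposite direction. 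With $K(x)=\abs{x}^{-a}$ for small $a>0$, even $\int_{\B^n}\abs{D\log\abs{f}}^n/K=\int_{\B^n}\abs{x}^{a-n}\,dx$ is finite.

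More structurally, no weight makes Steps~2 and~3 compatible. If you pull back $\abs{y}^{-n}\psi(\abs{y})\,\mathrm{vol}$, the primitive has coefficient $\Psi(t)=\int_0^t\psi(s)s^{-1}\,ds$. By the degree formula, the weighted Jacobian integral over the $x_0$-component of $f^{-1}\B(0,\rho)$ equals $i(x_0,f)\,n\omega_n\Psi(\rho)$. This is infinite only if $\int_0\psi(s)s^{-1}\,ds=\infty$. In that case there is no primitive vanishing at $y=0$, and the boundary term in Step~2 carries an unbounded factor near zeros of $f$ lying in the annuli (e.g.\ $\log(1/\abs{f})$ for $\psi\equiv1$), which $\mathcal{K}_{n-1}<\infty$ cannot control. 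Conversely, your H\"older/absorption step needs $\Psi^n\lesssim\psi^{n-1}$ on the range of $\abs{f}$, so $\Psi$ is finite and nothing blows up. The $\eps$-truncation does not escape this: with $\max(\abs{f},\eps)$ the coefficient is about $\log(\abs{f}/\eps)$ on $\{\abs{f}>\eps\}$, so the absorption constant grows like $\log(1/\eps)$ rather than staying uniform.

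The missing idea is to use only bounded weights and to compare at the scale $r^n$, instead of looking for an energy blow-up.
\begin{enumerate}
\item The paper first gets $\abs{Df}^n/K\in L^1(\R^n)$ and $\int_{\R^n}J_f=0$ from the Caccioppoli inequality for the unweighted volume form, whose primitive is bounded on the bounded range of $f$ (Lemmas~\ref{lem: J_f=0} and~\ref{lem: J_f L^1 J_f=0}).
\item It then applies Lemma~\ref{lem: J_f=0} to the radial truncations $h_t\circ(f-y_0)$, where $h_t(y)=\min(\abs{y},t)\,y/\abs{y}$. This gives $\int_{\{\abs{f-y_0}<t\}}J_f=0$ for every $t>0$ (Lemma~\ref{lem: J_f=0 on sub level set}).
\item By Theorem~\ref{open and discrete thm}, a zero $x_0$ contributes exactly $i(x_0,f)\,\omega_n r^n$ on a small neighborhood $U_r$. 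The rest of the sublevel set contributes at least $-r^n\int_{\{\abs{f-y_0}<r\}}\Sigma/K$, because $J_f^-\le\abs{f-y_0}^n\Sigma/K$.
\item Dividing by $r^n$ and letting $r\to0$ gives $i(x_0,f)\,\omega_n\le\int_{f^{-1}\{y_0\}}\Sigma/K=0$, a contradiction. This is where $\Sigma/K\in L^1(\R^n)$ is used sharply.
\end{enumerate}
A minor point: $\mathcal{K}_{n-1}<\infty$ only yields $K\in L^{n-1}_\loc$, not $p>n-1$, so that justification in Step~3 is wrong as stated. The paper simply invokes Theorem~\ref{open and discrete thm} at this step.
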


Notably, Theorem \ref{thm:liouville}, or alternatively a part of its proof, also immediately yields the following Liouville-type theorem for mappings of finite distortion.

\begin{thm}\label{thm: Liouville FDM case}
    Suppose that $f\in W^{1,n}_{\loc}(\R^n,\R^n)$ is a mapping of finite distortion with $\mathcal{K}_{n-1}<\infty$. If $f$ is bounded, then $f$ is constant. 
\end{thm}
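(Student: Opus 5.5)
We derive Theorem \ref{thm: Liouville FDM case} from Theorem \ref{thm:liouville} by applying the latter with $\Sigma \equiv 0$ at suitably chosen values $y_0$.

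\emph{Checking the hypotheses.} A mapping of finite distortion satisfies $|Df|^n \le K J_f$. Hence it has a value of finite distortion at every $y_0\in\R^n$ with data $(K,0)$. Take $q=\infty$, so that $\Sigma/K \equiv 0 \in L^\infty_\loc(\R^n)\cap L^1(\R^n)$ trivially. We also need $K \in L^p_\loc$ with $p^{-1} + q^{-1} = p^{-1} < 1$, that is, some $p>1$. The assumption $\mathcal K_{n-1}<\infty$ gives $K\in L^{n-1}_\loc(\R^n)$, since every ball lies in some $\B(0,R)$. When $n\ge 3$ we have $n-1\ge 2>1$, so $p=n-1$ works.

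\emph{The planar case.} When $n=2$ we only have $K\in L^1_\loc$, and the case $p = 1$ is not covered by Theorem \ref{thm:liouville}. This is the step I expect to be the main obstacle. I would not force Theorem \ref{thm:liouville} here. Instead I would follow the remark that "a part of its proof" suffices: the part of that proof yielding constancy presumably uses $\mathcal K_{n-1}<\infty$ only through a Caccioppoli/capacity estimate. Namely, test $|Df|^n\le KJ_f$ against $\eta^n$ with $\eta$ a logarithmic cutoff on $\B(0,R^2)\setminus\B(0,R)$. Integrating $J_f$ by parts gives $\int \eta^n J_f \lesssim \int \eta^{n-1}|\nabla\eta|\,|f-c|\,|Df|^{n-1}$. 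Then Hölder, with $K^{(n-1)/n}$ absorbed, bounds $\int_{\B(0,R)}|Df|^n/K$ by $C\,\|f\|_\infty^n(\log R)^{1-n}$ times averages of $K^{n-1}$ over dyadic annuli, which are controlled by $\mathcal K_{n-1}$. Letting $R\to\infty$ forces $Df/K^{1/n}=0$ a.e., hence $Df=0$ and $f$ is constant. This argument works for every $n\ge2$ and bypasses the $p>1$ requirement. For $n=2$, alternatively, the Iwaniec--Šverák factorization $f=h\circ g$ with $h$ holomorphic reduces the claim to classical Liouville, but only if one can show that the homeomorphism $g$ maps onto $\C$, which again needs $\mathcal K_1<\infty$. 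So I would use the direct estimate.

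\emph{Concluding in the case $n\ge3$.} Fix any $x_0\in\R^n$ and set $y_0:=f(x_0)$, using the continuous representative. Then $y_0\in f(\R^n)$, so Theorem \ref{thm:liouville} forces $f\equiv y_0$. Thus $f$ is constant.
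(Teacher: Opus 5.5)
\textbf{Gap in the $n\ge3$ branch.} Your main argument for $n\ge 3$ does not stand. In the paper, Theorem~\ref{thm:liouville} is proved by invoking Theorem~\ref{open and discrete thm} (discreteness of $f^{-1}\{y_0\}$ and $i(x_0,f)>0$), and that theorem requires $p>n-1$ \emph{strictly}. The hypothesis $\mathcal{K}_{n-1}<\infty$ gives only $K\in L^{n-1}_{\loc}(\R^n)$. Choosing $p=n-1$ therefore puts you exactly in the excluded borderline case. For $n\ge3$ this is not a technicality: by Hencl--Rajala \cite{henclrajala2013}, the Reshetnyak-type conclusions can fail for $K\in L^{n-1}_{\loc}$. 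The displayed statement of Theorem~\ref{thm:liouville} does not list $p>n-1$, but you cannot use it as a black box at $p=n-1$. Note also that with $\Sigma\equiv0$, its conclusion for every $y_0$ is literally equivalent to the statement you are proving, so in this regime the citation is no easier than the claim. Drop that branch.

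\textbf{Your direct estimate suffices.} Your ``planar'' argument is correct and, as you say, works for every $n\ge2$. Take $\eta$ to be the logarithmic cutoff, equal to $1$ on $\B(0,R)$ and vanishing outside $\B(0,R^2)$. The distortion inequality, Lemma~\ref{lem: Caccioppoli ineq} and H\"older give, after absorbing the quantity $\int\eta^n\abs{Df}^n/K$ (finite since $K\ge1$ and $\eta$ has compact support), the bound $\int_{\B(0,R)}\abs{Df}^n/K\le n^n\norm{f}_\infty^n\int\abs{\nabla\eta}^nK^{n-1}$. Splitting $\B(0,R^2)\setminus\B(0,R)$ into about $\log_2 R$ dyadic annuli bounds the right-hand side by $C(n)\norm{f}_\infty^n\mathcal{K}_{n-1}(\log R)^{1-n}\to0$.

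\textbf{Comparison with the paper.} This is essentially the paper's proof, which likewise uses no Reshetnyak theory: it applies Lemma~\ref{lem: J_f L^1 J_f=0} with $\Sigma\equiv0$. The difference lies in the cutoff. The paper uses a linear cutoff $\eta_r$ on $\B(0,2r)\setminus\B(0,r)$, which only yields a uniform bound and hence $\abs{Df}^n/K\in L^1(\R^n)$. Lemma~\ref{lem: J_f=0} then controls $\bigl|\int\eta_r^nJ_f\bigr|$ by the tail $\bigl(\int_{\R^n\setminus\B(0,r)}\abs{Df}^n/K\bigr)^{(n-1)/n}$ times a constant multiple of $\mathcal{K}_{n-1}^{1/n}$. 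This gives $\int_{\R^n}J_f=0$, and $J_f\ge0$ then forces $J_f=0$ and $Df=0$ a.e. Your logarithmic cutoff gives an explicit decay rate in one step, at the cost of a dyadic decomposition. The paper's version keeps the Jacobian identity (Lemma~\ref{lem: J_f=0}) independent of any distortion inequality. That is what lets the same computation give $\int_{\R^n}J_f=0$ when $J_f$ changes sign, which is needed for Theorem~\ref{thm:liouville}.
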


In this case when $J_f\ge0$ almost everywhere, the argument is essentially standard. Moreover, under the slightly stronger assumption $\mathcal K_p<\infty$ for some $p>n-1$, Guo~\cite[Corollary 2.6]{Guo2014polynomial} proved that the omitted
set $\overline{\R^n}\setminus f(\R^n)$ has vanishing $n$-capacity,
even without any boundedness assumption on $f$.

The condition $\mathcal K_{n-1}<\infty$ is sharp in
Theorem~\ref{thm: Liouville FDM case}, and hence also in
Theorem~\ref{thm:liouville}. Indeed, for every $p<n-1$ there exists a
bounded nonconstant mapping of finite distortion
$f\in W^{1,n}_{\loc}(\R^n,\mathbb R^n)$ such that
$\mathcal K_p<\infty$; see
Example~\ref{exam: K_p is finite}.

The assumption $\Sigma/K\in L^1(\R^n)$ of Theorem \ref{thm:liouville} is also sharp, in the sense that the theorem is no longer valid if we change it to $\Sigma/K\in L^r(\R^n)$ for some $r > 1$. Indeed, consider the map
\[
    f \in W^{1,n}_\loc(\R^n, \R^n), \quad
    f(x) = \begin{cases}
        x & \abs{x} \le 1\\
        \frac{x}{\abs{x}} & \abs{x} > 1.
    \end{cases}
\]
This is a bounded, non-constant map with a value of finite distortion at $0$, where $K \equiv 1$ and $\Sigma(x) = \cX_{\R^n \setminus \B^n}(x)/\abs{x}^n$. We observe that $K, \Sigma \in L^{\infty}(\R^n)$ and $\Sigma/K \in L^r(\R^n)$ for every $r > 1$, but $\Sigma/K \notin L^1(\R^n)$.

\subsection{Methods and other notable results}

The proof of Theorem \ref{open and discrete thm} follows a similar outline as that of \cite[Theorem 1.2]{kangasniemi2025single}, which in turn inherits many of its steps from the original proofs for quasiregular maps and mappings of finite distortion. In particular, if $f$ is not identically $y_0$, we first show that $f^{-1}\{y_0\}$ is totally disconnected, by showing that the Hausdorff dimension of $f^{-1}\{y_0\}$ is less than 1. We then show a local positivity result for the topological degree of $f$, which in turn is used to derive all three parts of Theorem \ref{open and discrete thm}.

However, there are two parts in the proof where our approach differs significantly from that of \cite{kangasniemi2025single}. The first such difference is centered around the Lusin (N) -property; recall that if $\Omega\subset\mathbb{R}^n$ is a domain and $f:\Omega\to\mathbb{R}^n$ is continuous, then $f$ satisfies the {\em Lusin (N) -property} if, for every $m_n$-nullset $E\subset\Omega$, the image $f E$ is an $m_n$-nullset. The Lusin (N) -property holds for quasiregular mappings by a result of Reshetnyak, see e.g.\ \cite{reshetni_ak1989space}, and also for $W^{1,n}_\loc$-regular mappings of finite distortion by a result of Kauhanen, Koskela, and Mal\'y \cite{kauhanen2001mappings}.

Notably, the proof of Theorem \ref{open and discrete thm} relies on $f$ satisfying the Lusin (N) -property. When $K$ is constant, this follows from a higher integrability and/or H\"older regularity result for $f$. However, in the case with variable $K$, there are no available higher integrability results, and the modulus of continuity shown in \cite{kangasniemi2025continuity} does not directly imply the Lusin (N) -property for $W^{1,n}$-maps; see the counterexamples of Koskela, Mal\'y, and Z\"urcher \cite{koskela2015luzin}, as well as the related positive result of Zapadinskaya \cite{zapadinskaya2014holder}.

For this reason, the Lusin (N) -property for solutions of \eqref{K,Sigma-quasiregular} and \eqref{K,Sigma-distortion} ends up being a result of independent interest. We record this result here.

\begin{thm}\label{thm:lusin_N}
    Let $\Omega\subset\mathbb{R}^n$ be a domain, and let $p,q\in[1,\infty]$.
    Suppose that
    $f\in W^{1,n}_{\loc}(\Omega,\mathbb{R}^n)$ satisfies the $(K,\Sigma)$-distortion inequality with defect or has a value of finite distortion at a point $y_0 \in \R^n$ with data $(K, \Sigma)$, where $K:\Omega\to[0,\infty)$ and $\Sigma:\Omega\to[0,\infty)$ are measurable functions such that
    \begin{equation*}
        K\in L^p_{\rm loc}(\Omega) \quad{\rm and}\quad
        \frac{\Sigma}{K}\in L^{q}_{\rm loc}(\Omega).
    \end{equation*}
    If $p^{-1}+q^{-1}<1$, then $f$ satisfies the Lusin (N) -property.
\end{thm}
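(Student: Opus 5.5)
First I would reduce the value-of-finite-distortion case to the defect case. By \cite{kangasniemi2025continuity}, the assumption $p^{-1}+q^{-1}<1$ gives a continuous representative of $f$, so $f$ is locally bounded. On a compactly contained subdomain $\Omega' \Subset \Omega$ we then have $\Sigma|f-y_0|^n \le C\Sigma$, so $f$ satisfies the $(K, C\Sigma)$-distortion inequality with defect on $\Omega'$. Since the Lusin (N) -property is local, it therefore suffices to treat \eqref{K,Sigma-distortion}. I would also allow $K$ to vanish, reading $\Sigma/K$ with the usual convention, and note that $\Sigma\le K\cdot(\Sigma/K)\in L^r_\loc$ with $r^{-1}=p^{-1}+q^{-1}<1$ by H\"older.

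The main tool is the criterion of Malý--Martio type. A continuous $W^{1,n}_\loc$ map satisfies (N) as soon as it has a suitable modulus-of-continuity or oscillation estimate on balls that can be summed over coverings. Concretely, the plan is to establish the following bound for small balls $B=\B(x,r)$ with $2B\Subset\Omega$:
\[
(\operatorname{osc}_{B} f)^n \le C\Big(\int_{2B} |Df|^n\Big)^{\theta}\, \Phi(2B) + C\,\Psi(2B).
\]
Here $\Phi$ and $\Psi$ are set functions that are absolutely continuous with respect to Lebesgue measure, built from $\int K^p$ and $\int(\Sigma/K)^q$. Given a null set $E$, we cover $E\cap\Omega'$ by balls whose doubles have small total measure and bounded overlap (Besicovitch). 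Summing with H\"older, the bound gives $\sum(\operatorname{osc}_B f)^n\to0$, hence $m_n(fE)=0$.

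To derive the oscillation estimate, I would rerun the argument of \cite{kangasniemi2025continuity} with attention to the dependence on the ball. On spheres $S_t=\partial\B(x,t)$ for a.e.\ $t\in(r,2r)$, Gehring's oscillation lemma gives
\[
(\operatorname{osc}_{S_t}f)^{n} \le C t^{n-1-\frac{n-1}{s}\cdot(\cdots)}\Big(\int_{S_t}|Df|^{s}\Big)^{n/s}
\]
for $s>n-1$, with an appropriate power of $t$. Combined with the $n$-monotonicity-type weak maximum principle (via degree theory, $\operatorname{osc}_B f\le\operatorname{osc}_{S_t}f$ up to the defect contribution), this controls the oscillation. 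The monotonicity step is where the defect enters. I would use the Caccioppoli/isoperimetric argument: $\int_{\B_t}J_f \le C(\operatorname{osc}_{S_t}f)\int_{S_t}|Df|^{n-1}$. Then I would estimate $\int|Df|^s\le(\int |Df|^n/K)^{s/n}(\int K^{s/(n-s)})^{\cdots}$ and insert $|Df|^n/K\le J_f+\Sigma/K$.

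The main obstacle is that, unlike the mapping-of-finite-distortion case of \cite{kauhanen2001mappings}, $J_f$ need not be nonnegative. So the usual route through the area formula and degree is unavailable, and the term $\int\Sigma/K$ does not scale like $|Df|^n$. I would handle this through the differential inequality in $t$ for $\Phi(t)=\int_{\B_t}|Df|^n/K$. The quantity $\int_{\B_t}(\Sigma/K)\le \|\Sigma/K\|_{L^q}|\B_t|^{1-1/q}$ yields a power of $r$, and pairing it with $K\in L^p$ is exactly where $p^{-1}+q^{-1}<1$ makes the exponent of $r$ exceed $n$ times what is needed. That in turn gives a summable term $\Psi(2B)\lesssim \|K\|_{L^p(2B)}^{a}\|\Sigma/K\|_{L^q(2B)}^{b}|2B|^{\gamma}$ with $\gamma>0$, where $\Psi$ tends to zero under coverings of null sets. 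If the ODE comparison fails to give the clean form above, I would fall back on proving $|f(B)|\le C(\operatorname{osc}_B f)^n$ directly via an $L^\infty$ bound on $Df$-weighted spheres, and then use absolute continuity of $\int_{2B}(|Df|^n+K^p+(\Sigma/K)^q)$.
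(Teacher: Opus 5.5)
The gap is the step that turns the defect inequality into an oscillation bound on the ball. Because of the defect, the coordinate functions $f_i$ are not weakly monotone. So $\operatorname{osc}_{B_t} f \le \operatorname{osc}_{S_t} f$ must be replaced by a quantitative bound for $\varphi = (f_i - \max_{S_t} f_i)^+ \in W^{1,n}_0(B_t)$ (and for $(\min_{S_t} f_i - f_i)^+$), namely $\norm{\varphi}_{L^\infty(B_t)}^n \le C \norm{K}_{L^p(B_t)} \norm{\Sigma/K}_{L^q(B_t)}\, t^{n(1-p^{-1}-q^{-1})}$. This is \cite[Theorem 1.6]{kangasniemi2025continuity}, and it is not obtained from degree theory or an ODE in $t$. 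On $\{\varphi>0\}$ one has $J_f = \langle \nabla\varphi, X\rangle$ with $X$ a divergence-free cofactor field, so $J_f$ integrates to zero over every superlevel set of $\varphi$. This lets one weight the distortion inequality by $(\mu^+_\varphi\circ\varphi)^{-\gamma}$ with $\gamma\in(p^{-1},1-q^{-1})$. The superlevel-set Sobolev inequality $\norm{\varphi}_{L^\infty}\le C\int\abs{\nabla\varphi}(\mu^+_\varphi\circ\varphi)^{-(n-1)/n}$ plus H\"older then closes the estimate, and this is exactly where $p^{-1}+q^{-1}<1$ is used. The tools you list do not give this bound:
\begin{itemize}
\item The inequality $\int_{B_t}J_f\le C\operatorname{osc}_{S_t}f\int_{S_t}\abs{Df}^{n-1}$ and a differential inequality for $\int_{B_t}\abs{Df}^n/K$ control energy, not $\sup_{B_t} f_i$. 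A $W^{1,n}$ energy bound, let alone a $1/K$-weighted one, gives no $L^\infty$ bound.
\item A degree argument without a sign on $J_f$ likewise only yields an integral bound on the part of $B_t$ mapped beyond $f(S_t)$. Also, the degree formula as used in this paper presupposes (N), which is what you are proving.
\end{itemize}
You say you would rerun \cite{kangasniemi2025continuity}, but the steps you then describe are not that argument.

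There are further problems.
\begin{itemize}
\item Your Gehring step with $s>n-1$ and H\"older against $K^{s/(n-s)}$ requires $s\le pn/(p+1)$, hence $p>n-1$. The theorem does not assume this; for $n=3$ it covers, e.g., $p=3/2$, $q=4$. The detour is also unnecessary: since $f\in W^{1,n}$, Gehring's lemma at exponent $n$ gives $(\operatorname{osc}_{S_t}f)^n\le Ct\int_{S_t}\abs{Df}^n$ with no $K$ at all. Combined with the $L^\infty$ bound above, this is exactly the paper's Lemma~\ref{osc lemma}.
\item Besicovitch gives bounded overlap of the selected balls, not of their doubles. The disjoint balls $B(z+\tfrac32 10^{-k}e,10^{-k})$, $k\ge 1$, all have doubles containing $z$. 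So an estimate with $2B$ on the right cannot be summed as you propose.
\end{itemize}
The paper avoids doubles altogether. The Mal\'y--Martio selection gives radii with $r\int_{S(x,r)}\abs{Df}^n\le C\int_{B(x,r)}(\abs{Df}^n+1)$ on a set meeting every $(0,\delta)$ in positive measure, so the bound lives on $B(x,r)$ itself. A Vitali covering of the image balls $B(f(x),\operatorname{osc}_{B(x,r_x)}f)$ then makes the corresponding domain balls essentially disjoint. Alternatively, integrating the sphere bound over $t\in(r,2r)$ and summing over a Whitney decomposition of an open $V\supset E$ would also work. Your reduction of the value-of-finite-distortion case via continuity is fine.
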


The proof of Theorem \ref{thm:lusin_N} uses the key estimate \cite[Theorem 1.6]{kangasniemi2025continuity} behind the continuity of $W^{1,n}$-solutions to \eqref{K,Sigma-distortion}, along with ideas from the proof of the Lusin (N) -property for continuous pseudomonotone $W^{1,n}$-maps by Mal\'y and Martio \cite[Theorem A]{maly1995lusin}. We highlight that Theorem \ref{thm:lusin_N} in fact uses \cite[Theorem 1.6]{kangasniemi2025continuity} in a sharper manner than the main continuity result of its original paper. In particular, the continuity of $W^{1,n}$-solutions to \eqref{K,Sigma-distortion} arises from them being almost weakly monotone, see \cite[Definition 1.4]{kangasniemi2025continuity}, but the proof of Theorem \ref{thm:lusin_N} does not work for arbitrary almost weakly monotone $W^{1,n}$-maps.

The other major difference between the proofs of Theorem \ref{open and discrete thm} and \cite[Theorem 1.2]{kangasniemi2025single} occurs when we deduce that the topological degree of $f$ cannot vanish locally. This follows by showing that if it does vanish, then one obtains a continuity result for $\log \lvert f - y_0 \rvert$ which leads to a contradiction. The original version \cite[Lemma 5.5]{kangasniemi2025single} of this step uses technical estimates from \cite[Section 6]{kangasniemi2022heterogeneous}, but this approach is critically reliant on $K$ being constant.

Instead, we obtain this continuity result by again using the recent new techniques from \cite{kangasniemi2025continuity}. Notably, we cannot directly use the main result of \cite{kangasniemi2025continuity}, since when $K$ is non-constant, it is not clear to us whether $\log \lvert f - y_0 \rvert$ is in a $W^{1,n}$-space in this case. However, the underlying ideas still apply in our setting, as we observe in Proposition \ref{prop:Linfty_generalization}.

For Theorem \ref{thm:liouville}, the main part of its proof is to show that, given a mapping $f \in W^{1,n}_\loc(\R^n, \R^n)$ satisfying its assumptions, the integral of $J_f$ over the sublevel set $\{x \in \R^n : \abs{f(x) - y_0} < r\}$ vanishes for every $r > 0$. A critical part of this argument is a result which derives that the integral of $J_f$ over $\R^n$ vanishes using a combination of $f \in L^\infty(\R^n)$, $J_f^{-} \in L^1(\R^n)$, $\abs{Df}^n/K \in L^1(\R^n)$, and $\mathcal{K}_{n-1} < \infty$, without using any type of distortion inequality at all in the proof; see Lemma \ref{lem: J_f=0}. In some sense, this result acts as our counterpart to results such as \cite[Lemma 2.4]{kangasniemi2022heterogeneous}.

In \cite{kangasniemi2025single}, the proof of the Reshetnyak-type theorem for quasiregular values uses tools and ideas from the proof of the analogous Liouville-type theorem. Here, the final steps of our proof take the reverse approach, proving the Liouville-type Theorem \ref{thm:liouville} with the help of the Reshetnyak-type Theorem \ref{open and discrete thm}. This highlights how the proofs of these two types of results are closely intertwined, and that the results rely on a lot of similar ideas.

\section{Preliminaries}
In this section, we discuss several preliminary results that we use in our proofs. 

\subsection{The spherical logarithm}
We begin by recalling the \emph{spherical logarithm}, which has been a standard tool in the study of quasiregular values since \cite{kangasniemi2022heterogeneous}. Let $\Omega\subset\mathbb{R}^n$ be open, let $f\in W^{1,n}_{\rm loc}(\Omega,\mathbb{R}^n)$, and consider the map
\begin{equation}\label{spherical-log- formula}
    g:\mathbb{R}^n\to\mathbb{R}\times\mathbb{S}^{n-1},\quad g(x)=\left(\log|f(x)-y_0|,\frac{f(x)-y_0}{|f(x)-y_0|}\right).
\end{equation}
We denote the component function of $g$ by $g_\mathbb{R}:\Omega\to\mathbb{R}$ and $g_{\mathbb{S}^{n-1}}:\Omega\to\mathbb{S}^{n-1}$. We say that $g\in W^{1,p}_{\rm loc}(\Omega,\mathbb{R}\times\mathbb{S}^{n-1})$ if $g_\mathbb{R}\in W^{1,p}_{\rm loc}(\Omega)$ and $g_{\mathbb{S}^{n-1}}\in W^{1,p}_{\rm loc}(\Omega,\mathbb{R}^n)$, using the standard embedding $\mathbb{S}^{n-1}\subset\mathbb{R}^n$. We can then define a Jacobian determinant of $g$ a.e. on $\Omega$ by
\[
  J_g\,\vol_n
  = dg_{\mathbb{R}}\wedge g_{\mathbb{S}^{n-1}}^{\ast}\vol_{\mathbb{S}^{n-1}}
  = dg_{\mathbb{R}}\wedge g_{\mathbb{S}^{n-1}}^{\ast}
    \star d\!\left(2^{-1} |x|^2\right),
\]
where we use the fact that
$\star d\!\left(2^{-1} |x|^2\right)\in C^{\infty}(\wedge^{n-1}T^{\ast}\R^n)$
restricts to our chosen volume form $\vol_{\mathbb{S}^{n-1}}$ on $\mathbb{S}^{n-1}$.

We then proceed to study the map $g$ in our setting. For this, we require the following slight generalization of \cite[Lemma 5.4]{kangasniemi2022heterogeneous}; the proof is largely similar, though we recall it for the convenience of the reader.

\begin{lem}\label{lem: log_is_Sobolev}
    Let $p \in [1, \infty)$, let $B\subset\R^n$ be a ball, and let $y_0\in\R^n$. If $f\in W^{1,p}(B,\R^n)$ satisfies $|Df|/|f-y_0|\in L^p(B)$, then $f \equiv y_0$ almost everywhere, or $f^{-1}\{y_0\}$ is a null-set and $\log |f-y_0| \in W^{1,p}_\loc(B)$. 
\end{lem}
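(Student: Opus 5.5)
Assume without loss of generality that $y_0 = 0$, and work with the truncated logarithms
\[
    h_\eps = \log\bigl(\max\{|f|, \eps\}\bigr), \qquad \eps > 0.
\]
These are Lipschitz compositions of $|f| \in W^{1,p}(B)$, so $h_\eps \in W^{1,p}(B)$ by the chain rule. Almost everywhere,
\[
    |\nabla h_\eps| = \mathcal{X}_{\{|f|>\eps\}}\, \frac{|\nabla |f||}{|f|} \le \frac{|Df|}{|f|} =: G \in L^p(B),
\]
a bound independent of $\eps$. Here we use that $\nabla |f| = 0$ a.e.\ on $\{|f| \le \eps\}$, together with $|\nabla|f|| \le |Df|$.

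The plan is to show that either $f \equiv 0$ a.e., or $Z = f^{-1}\{0\}$ is null and $h_\eps$ converges in $W^{1,p}_\loc$ to $\log|f|$. The uniform gradient bound gives, via the Poincaré inequality on any ball $B' \Subset B$ (or on $B$ itself),
\[
    \|h_\eps - (h_\eps)_{B'}\|_{L^p(B')} \le C\, \|G\|_{L^p(B)}.
\]
First consider the case where the averages $(h_\eps)_{B}$ stay bounded below as $\eps \to 0$. They are monotone decreasing in $\eps\downarrow 0$ and bounded above by $\int_B \log^+|f| <\infty$. Then $h_\eps$ is bounded in $W^{1,p}(B)$. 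Since $h_\eps \downarrow \log|f|$ pointwise monotonically, Fatou/monotone convergence gives $\log|f| \in L^p(B)$, so $|f|>0$ a.e.\ and $Z$ is null. Moreover, $\nabla h_\eps = \mathcal X_{\{|f|>\eps\}}\nabla|f|/|f|$ converges in $L^p$, by dominated convergence with dominant $G$, to $\nabla|f|/|f|$ on $B\setminus Z$. Weak derivatives pass to the limit, so $\log|f| \in W^{1,p}(B)$.

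The main obstacle is the other case, $(h_\eps)_B \to -\infty$, which must force $f\equiv 0$ a.e. The functions $u_\eps = h_\eps - (h_\eps)_B$ are bounded in $W^{1,p}(B)$, so by the Poincaré inequality $h_\eps \to -\infty$ in measure on $B$: for every $M$, $|\{h_\eps > -M\}| \to 0$. Since $h_\eps \ge \log|f|$ on all of $B$, we get $|\{\log|f| > -M\}| \le |\{h_\eps > -M\}|\to 0$ for each fixed $M$. Hence $|\{|f|>e^{-M}\}| = 0$ for every $M$, that is, $f = 0$ a.e. on $B$.

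The two cases are exhaustive, which gives the dichotomy. The one point needing care is that on a ball the Poincaré constant is finite, so the argument works directly on $B$ and in fact yields $W^{1,p}(B)$, which is stronger than the claimed $W^{1,p}_\loc(B)$. The delicate step is really just the measure-theoretic passage from the divergence of the averages to $f\equiv 0$, which I would handle exactly as above using the pointwise bound $h_\eps \ge \log |f|$.
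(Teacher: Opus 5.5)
Your proof is correct and is essentially the paper's argument. You use the same truncations $\log\max\{|f-y_0|,t\}$ with the $t$-independent gradient bound $|Df|/|f-y_0|$, and the Poincar\'e inequality on $B$ to show that the decreasing averages can only diverge to $-\infty$ when $f\equiv y_0$ a.e.\ (the paper does this by contradiction via a positive-measure set on which $t_0^{-1}<|f-y_0|<t_0$, you via Chebyshev). You then pass to the limit, and your Fatou step even yields $\log|f-y_0|\in W^{1,p}(B)$ rather than only $W^{1,p}_\loc(B)$.

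One small slip: $\nabla|f|$ need not vanish on $\{|f|<\eps\}$. What you actually use is that $\nabla h_\eps=0$ a.e.\ there because the outer function is constant, together with $\nabla|f|=0$ a.e.\ on the level set $\{|f|=\eps\}$.
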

\begin{proof}
    Suppose that $f$ is not a.e.\ $y_0$, in which case there exists a $t_0 \in (1, \infty)$ and a set $F \subset B$ of positive measure such that $t_0^{-1} < |f-y_0| < t_0$ in $F$. We denote $h = \log |f-y_0|$, and for every $t \in (0, \infty)$, we denote $h_t = \log \max \{t, |f-y_0|\}$. By a Lipschitz-Sobolev chain rule, we obtain for every $t$ that $h_t \in W^{1,p}(B)$ and
    \begin{multline*}
        |\nabla h_t| 
        = \frac{|\nabla |f-y_0||}{|f-y_0|}
            \mathcal{X}_{\{ x \in B : |f(x)-y_0| > t\}}\\
        \le \frac{|Df|}{|f-y_0|} 
            \mathcal{X}_{\{ x \in B : |f(x)-y_0| > t\}}
        \le \frac{|Df|}{|f-y_0|}
    \end{multline*}
    a.e.\ in $B$; here, $\mathcal{X}_A$ denotes the characteristic function of a set $A \subset \R^n$.

    We first prove that $h \in L^1(B)$. Suppose towards contradiction that this is not the case: then, since the positive part $h^{+}$ of $h$ satisfies $h^{+} \le |f-y_0| \in L^1(B)$, we must have $h_B = -\infty$. Since $h_t$ converges pointwise to $h$ in a monotonically decreasing manner as $t \to 0$, the monotone convergence theorem yields that $\lim_{t \to 0} (h_t)_B = -\infty$. But now, by using the standard Sobolev-Poincar\'e inequality, we have for all $t \in (0, t_0^{-1})$ that
    \begin{multline*}
        m_n(F) \left( |(h_t)_B| - \log t_0\right)
        \le \int_F \left( |(h_t)_B| - |h|\right)\\ 
        = \int_F \left( |(h_t)_B| - |h_t|\right)
        \le \int_B |h_t - (h_t)_B|
        \le C(n,p,B) \lVert \nabla h_t \rVert_{L^p(B)}\\
        \le C(n,p,B) \left( \int_B \frac{|Df|^p}{|f-y_0|^p} 
            \right)^\frac{1}{p}.
    \end{multline*}
    This is a contradiction, since the lower bound tends to $\infty$ as $t \to 0$, but the upper bound is finite. 
    
    Thus, $h \in L^1(B)$. Since $h(x) = -\infty$ for all $x \in f^{-1}\{y_0\}$, we thus have that $f^{-1}\{y_0\}$ is a null-set. We also have
    \begin{align*}
        \lVert h_t-h\rVert_{L^1(B)}
        &= \int_{\{x \in B : |f(x)-y_0| \le t\}} |h - \log t|
        \xrightarrow{t \to 0} 0,
        \qquad \text{and}\\
        \left\lVert \nabla h_t  - \frac{\nabla |f-y_0|}{|f-y_0|}\right\rVert_{L^1(B)}
        &\le \int_{\{x \in B : |f(x)-y_0| \le t\}} \left| \frac{|Df|}{|f-y_0|} \right|
        \xrightarrow{t \to 0} 0,
    \end{align*}
    since $h \in L^1(B)$ and $|Df|/|f - y_0| \in L^p(B) \subset L^1(B)$. We conclude that $h \in W^{1,1}(B)$ with weak gradient $\nabla h = |f-y_0|^{-1} \nabla |f-y_0| \in L^p(B)$, which implies that $h \in W^{1,p}_\loc(B)$.
\end{proof}

Using Lemma \ref{lem: log_is_Sobolev}, we obtain the following result.

\begin{lem}\label{lem: spherical-log}
    Suppose that $\Omega\subset\R^n$ is open, $y_0\in\R^n$, and $f\in W^{1,n}_{\loc}(\Omega,\R^n)$ with
    $|Df|/|f-y_0|\in L^p_{\loc}(\Omega)$ for some $p\in (1,n)$. If $g:\Omega\to\R\times \mathbb{S}^{n-1}$ is as in \eqref{spherical-log- formula}, then
    $g\in W^{1,p}_{\loc}(\Omega,\R\times \mathbb{S}^{n-1})$ and
    \begin{equation}\label{eq:DG-and-JG}
        |Dg| = \frac{|Df|}{|f-y_0|},
        \qquad \text{and} \qquad
        J_g = \frac{J_f}{|f-y_0|^n}
    \end{equation}
    a.e.\ in $\Omega$. In particular, if $f$ has a value of finite distortion at $y_0$ with data $(K,\Sigma)$ for some measurable $K:\Omega\to[1,\infty)$ and $\Sigma:\Omega\to [0,\infty)$, then
    \begin{equation}\label{eq: spherical-log-distortion}
        |Dg|^n\le KJ_g+\Sigma
    \end{equation}
\end{lem}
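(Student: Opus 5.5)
The plan is to treat the two components of $g$ separately, then compute the derivative and Jacobian pointwise on the set where $f \neq y_0$, and finally read off the distortion inequality by dividing by $|f-y_0|^n$.

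\emph{Step 1: the component $g_\R$.} Cover $\Omega$ by balls $B \Subset \Omega$. On each such $B$ we have $f \in W^{1,p}(B,\R^n)$, because $p<n$ and $f\in W^{1,n}_\loc$, and also $|Df|/|f-y_0| \in L^p(B)$. Lemma \ref{lem: log_is_Sobolev} then applies. Either $f\equiv y_0$ a.e.\ on $B$, or $f^{-1}\{y_0\}\cap B$ is null and $g_\R=\log|f-y_0|\in W^{1,p}_\loc(B)$. The degenerate alternative needs care: there $|Df|/|f-y_0|$ is not meaningful, so I would interpret the hypothesis as forcing $f^{-1}\{y_0\}$ to be null; equivalently, I would assume $f\not\equiv y_0$ on any ball. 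By connectedness of components, the null property then propagates. A partition argument gives $g_\R\in W^{1,p}_\loc(\Omega)$ with $\nabla g_\R = \nabla|f-y_0|/|f-y_0|$.

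\emph{Step 2: the component $g_{\S^{n-1}}$.} For this component I would use truncation. Set $\phi_t(y) = (y-y_0)/\max\{t,|y-y_0|\}$. This is Lipschitz, so $\phi_t\circ f\in W^{1,n}_\loc$. On $\{|f-y_0|>t\}$ its derivative is $P_{f}Df/|f-y_0|$, where $P$ is the orthogonal projection onto the tangent space of the sphere at $(f-y_0)/|f-y_0|$. Hence $|D(\phi_t\circ f)|\le |Df|/|f-y_0|$ a.e., and the same bound holds on $\{|f-y_0|\le t\}$, where $D(\phi_t\circ f)=Df/t$ and $t \ge |f-y_0|$ there. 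We also have $|\phi_t\circ f|\le 1$. As $t\to 0$, $\phi_t\circ f\to g_{\S^{n-1}}$ a.e., since the zero set is null. The gradients converge in $L^p_\loc$ to $P_fDf/|f-y_0|$ by dominated convergence. This gives $g_{\S^{n-1}}\in W^{1,p}_\loc$.

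\emph{Step 3: pointwise formulas.} At a.e.\ point, write $u=(f-y_0)/|f-y_0|$, $\rho = |f-y_0|$ and $A=Df$. Then $Dg$, viewed as a map into $T(\R\times\S^{n-1}) \cong \R\times u^\perp\cong\R^n$, is
\[
v\mapsto \rho^{-1}\bigl(\langle u, Av\rangle, P_u Av\bigr).
\]
The map $w\mapsto(\langle u,w\rangle,P_uw)$ is an isometry $\R^n\to\R\times u^\perp$. Hence $|Dg|=|A|/\rho$. It also gives the Jacobian: $dg_\R\wedge g_{\S^{n-1}}^*\vol_{\S^{n-1}}$ equals $\rho^{-n}$ times the pullback under $A$ of $du\wedge\vol_{\S^{n-1}}$, and $du\wedge\vol_{\S^{n-1}}$ evaluated at the point $u$ is the Euclidean volume form, by the orientation convention $\star d(|x|^2/2)$. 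So $J_g=J_f/\rho^n$. I expect this to be the main obstacle: checking carefully that the sign and orientation convention for $\vol_{\S^{n-1}}$ makes $dr\wedge\vol_{\S^{n-1}}$ the standard volume form. This is a local linear-algebra check at a point where $u=e_1$, with $\star(dx_1)=dx_2\wedge\dots\wedge dx_n$.

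\emph{Step 4: the distortion inequality.} Dividing \eqref{K,Sigma-quasiregular} by $\rho^n$ on the full-measure set $\{f\ne y_0\}$ gives
\[
|Dg|^n = \frac{|Df|^n}{\rho^n}\le K\frac{J_f}{\rho^n}+\Sigma = KJ_g+\Sigma
\]
a.e., which is \eqref{eq: spherical-log-distortion}.
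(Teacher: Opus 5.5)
Your proposal is correct and follows the paper's proof essentially step for step. You use Lemma~\ref{lem: log_is_Sobolev} for $g_\R$, the truncations $(f-y_0)/\max\{|f-y_0|,t\}$ for $g_{\mathbb{S}^{n-1}}$ (closing with dominated convergence where the paper uses a Cauchy-sequence and completeness argument), and division of \eqref{K,Sigma-quasiregular} by $|f-y_0|^n$. The only difference is cosmetic: the paper obtains \eqref{eq:DG-and-JG} by factoring $f-y_0=\phi\circ g$ through the conformal map $\phi(t,x)=e^t x$ instead of your explicit isometry and orientation check, and your care in excluding the degenerate alternative $f\equiv y_0$ is a point the paper's proof passes over silently.
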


\begin{proof}
    By Lemma \ref{lem: log_is_Sobolev}, $f^{-1}\{y_0\}$ is a null-set and $g_\mathbb{R}\in W^{1,p}_\loc(\Omega)$. In order to show the local Sobolev regularity of $g_{\mathbb{S}^{n-1}}$, let $U\subset\Omega$ be open
    and compactly contained in $\Omega$. For every $t>0$, we consider the function
    \begin{equation*}
        g_{\mathbb{S}^{n-1},t}:\Omega\to\mathbb{R}^n,
        \quad g_{\mathbb{S}^{n-1},t}=\frac{f-y_0}{\max\{|f-y_0|,t\}}.
    \end{equation*}
    By a Lipschitz-Sobolev chain rule  and the product rule of Sobolev spaces, $g_{\mathbb{S}^{n-1},t}$ is weakly differentiable. Moreover, we have $Dg_{\mathbb{S}^{n-1},t}=Dg_{\mathbb{S}^{n-1}}$ a.e. in $\{|f-y_0|\ge t\}$, and have $|Dg_{\mathbb{S}^{n-1},t}|=t^{-1}|Df|<|Df|/|f-y_0| $ a.e. in $\{|f-y_0|<t\}$; see e.g. \cite[Theorem 1.20]{heinonen2018nonlinear}. Thus, 
    $Dg_{\mathbb{S}^{n-1},t}\in L^p(U)$, and consequently $g_{\mathbb{S}^{n-1},t}\in W^{1,p}(U,\mathbb{R}^n)$. Moreover, if $t'<t$, we have
    \begin{equation*}
        \|g_{\mathbb{S}^{n-1},t}-g_{\mathbb{S}^{n-1},t'}\|^p_{W^{1,p}(U)}\le 2\int_{U\cap\{|f-y_0|<t\}}\left(1+\frac{|Df|^p}{|f-y_0|^p}\right)\xrightarrow{t\to 0}0,
    \end{equation*}
    which shows that the functions $g_{\mathbb{S}^{n-1},t}$ form Cauchy sequences in $W^{1,p}(U,\mathbb{R}^n)$
    as $t\to 0$. Since also $g_{\mathbb{S}^{n-1},t}\to g_{\mathbb{S}^{n-1}}$ pointwise as $t\to0$, we obtain that
    $g_{\mathbb{S}^{n-1}}\in W^{1,p}(U,\mathbb{R}^n)$ by completeness of Sobolev spaces.

    Thus, we obtain that $g\in W^{1,p}_{\loc}(\Omega,\mathbb{R}\times\mathbb{S}^{n-1})$. Observe that if $\varphi:\mathbb{R}\times\mathbb{S}^{n-1}$
    is the smooth conformal map $\varphi(t,x)=e^tx$, then $f=\varphi\circ g$ and $Df=D\varphi \circ Dg$ a.e. in $U$, and $|D\varphi(t,x)|^n=J_\varphi(t,x)=e^{nt}$ for all $(t,x)\in\mathbb{R}\times\mathbb{S}^{n-1}$. Thus,
    \begin{equation*}
        |Dg|=\frac{|Df|}{|D\varphi|\circ g}=\frac{|Df|}{|f-y_0|}\quad{\rm and}\quad J_g=\frac{J_f}{J_\varphi\circ g}=\frac{J_f}{|f-y_0|^n}
    \end{equation*}
    for a.e. $x\in \Omega$, which completes the proof.
\end{proof}

\subsection{Almost weakly monotone functions}
We recall that a continuous function $\varphi \colon \Omega \to \R$ is called \emph{monotone} if, for every open ball $B$ that is compactly contained in $\Omega$, the oscillation of $\varphi$ is attained on the boundary:
\begin{equation*}
    \mathop{\rm osc}_B\varphi=\mathop{\rm osc}_{\partial B}\varphi.
\end{equation*}
When applying similar ideas to very weak solutions to differential inequalities, such as mappings of finite distortion satisfying \eqref{K,Sigma-quasiregular} with $\Sigma\equiv0$, one needs to adopt the notion of so-called weakly monotone functions, due to Manfredi \cite{manfredi1994weakly}. A Sobolev function $\varphi:\Omega\to\mathbb{R}$ is called {\em weakly monotone} if, for every open ball $B$ that is compactly contained in $\Omega$ and all $M,m\in\mathbb{R}$ such that 
\begin{equation*}
    (\varphi|_B-M)^+,(m-\varphi|_B)^+\in W^{1,1}_0(B),
\end{equation*}
we have $(\varphi|_B-M)^+\equiv(m-\varphi|_B)^+\equiv0$.

If $\Sigma\not\equiv0$, the coordinate functions of solutions to \eqref{K,Sigma-quasiregular} need not be weakly monotone. Motivated by this, Onninen and Kangasniemi introduced a further quantitative weakening of monotonicity; see \cite[Definition 1.4]{kangasniemi2025continuity}. It is stated as follows:
\begin{defn}
    Let $\Omega\subset\mathbb{R}^n$ be open and let $\varphi\in W^{1,1}_{\rm loc}(\Omega)$. We say that $\varphi$ is $\alpha$-{\em almost weakly monotone} if there exist constant $C,\alpha>0$ such that, whenever $B_r\subset\Omega$ is a ball of radius $r$ that is compactly contained in $\Omega$ and $M,m\in\mathbb{R}$ are such that
    \begin{equation*}
        (\varphi|_{B_r}-M)^+\in W^{1,1}_0(B_r)\quad{\rm and}\quad(m-\varphi|_{B_r})^+\in W^{1,1}_0(B_r),
    \end{equation*}
    then we have
    \begin{equation*}
        \|(\varphi|_{B_r}-M)^+\|_{L^\infty(B_r)}\le Cr^\alpha\quad{\rm and}\quad  \|(m-\varphi|_{B_r})^+\|_{L^\infty(B_r)}\le Cr^\alpha.
    \end{equation*}
\end{defn}

We remark that if $\varphi\in W^{1,p}_{\rm loc}(\Omega)$, then all instances of $W^{1,1}_0$-spaces in the definition can be replaced with $W^{1,p}_0$-spaces.

\begin{lem}[{\cite[Lemma 4.1]{kangasniemi2025continuity}}]\label{W1p weak monotone lemma}
    Let $n\ge 2$, let  $\Omega\subset\mathbb{R}^n$ be open, let $p\in[1,\infty)$, and let $\varphi\in W^{1,p}_{\rm loc}(\Omega)$. Then $\varphi$ is $\alpha$-almost weakly monotone with constants $C,\alpha>0$ if and only, whenever $B_r\subset\Omega$ is a ball of radius $r$ that is compactly contained in $\Omega$ and $M,m\in\mathbb{R}$ are such that
    \begin{equation*}
        (\varphi|_{B_r}-M)^+\in W^{1,p}_0(B_r)\quad{\rm and}\quad(m-\varphi|_{B_r})^+\in W^{1,p}_0(B_r),
    \end{equation*}
    then we have
    \begin{equation*}
        \|(\varphi|_{B_r}-M)^+\|_{L^\infty(B_r)}\le Cr^\alpha\quad{\rm and}\quad  \|(m-\varphi|_{B_r})^+\|_{L^\infty(B_r)}\le Cr^\alpha.
    \end{equation*}
\end{lem}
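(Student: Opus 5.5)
The plan is to show that, for balls compactly contained in $\Omega$, the two admissibility conditions in the definition and in the lemma pick out exactly the same pairs $(M,m)$. The $L^\infty$ conclusions are identical in both formulations, so this suffices. Fix a ball $B_r \Subset \Omega$. Since $\varphi\in W^{1,p}_{\loc}(\Omega)$ and $\overline{B_r}$ is a compact subset of $\Omega$, we have $\varphi|_{B_r}\in W^{1,p}(B_r)$. By the Lipschitz chain rule, $u := (\varphi|_{B_r}-M)^+$ and $v := (m-\varphi|_{B_r})^+$ also lie in $W^{1,p}(B_r)$. It therefore suffices to prove the following claim: for $u\in W^{1,p}(B_r)$,
\[
    u\in W^{1,1}_0(B_r) \iff u\in W^{1,p}_0(B_r).
\]

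The implication from $W^{1,p}_0$ to $W^{1,1}_0$ is immediate. Since $B_r$ is bounded, Hölder's inequality shows that convergence of $C^\infty_0$-approximants in $W^{1,p}(B_r)$ implies convergence in $W^{1,1}(B_r)$.

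For the converse, let $u\in W^{1,p}(B_r)\cap W^{1,1}_0(B_r)$, and let $\tilde u$ be the extension of $u$ by zero to $\R^n$. Because $u\in W^{1,1}_0(B_r)$, a standard approximation argument (pass to the limit in the integration by parts identity for $C^\infty_0$-approximants) gives $\tilde u\in W^{1,1}(\R^n)$ with $\nabla\tilde u = \mathcal{X}_{B_r}\nabla u$. Both $\tilde u$ and $\nabla \tilde u$ lie in $L^p(\R^n)$, so $\tilde u\in W^{1,p}(\R^n)$ with support in $\overline{B_r}$.

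Next, assume without loss of generality that $B_r$ is centered at the origin, and for $\lambda\in(0,1)$ set $u_\lambda(x)=\tilde u(x/\lambda)$. Then $u_\lambda$ is supported in $\overline{B_{\lambda r}}\Subset B_r$. By continuity of dilations in $L^p$ (valid since $p<\infty$), applied to both $\tilde u$ and $\nabla\tilde u$, we get $u_\lambda\to\tilde u$ in $W^{1,p}(\R^n)$ as $\lambda\to1$. Mollifying each $u_\lambda$ with a kernel of radius smaller than $(1-\lambda)r$ produces functions in $C^\infty_0(B_r)$ that approximate $u_\lambda$ arbitrarily well in $W^{1,p}$. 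A diagonal argument then yields $u\in W^{1,p}_0(B_r)$.

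Applying the claim to $u$ and $v$ shows that the hypothesis of the definition holds for $(B_r,M,m)$ exactly when the hypothesis of the lemma does. Hence the two notions of $\alpha$-almost weak monotonicity coincide, with the same constants $C$ and $\alpha$.

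The only step needing care is the zero-extension $\tilde u\in W^{1,1}(\R^n)$ with gradient $\mathcal{X}_{B_r}\nabla u$. Everything afterwards is a routine dilation and mollification argument that uses the star-shapedness of the ball in place of trace theory.
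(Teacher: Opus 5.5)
The paper gives no proof of this lemma; it imports the statement from \cite[Lemma 4.1]{kangasniemi2025continuity}. So there is no in-paper argument to compare against, and your proposal is a correct, self-contained proof.

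The reduction is sound. For a ball $B_r$ compactly contained in $\Omega$, the two formulations have identical $L^\infty$ conclusions. It therefore suffices to show that
\[
W^{1,p}(B_r)\cap W^{1,1}_0(B_r)=W^{1,p}_0(B_r),
\]
applied separately to $(\varphi|_{B_r}-M)^+$ and $(m-\varphi|_{B_r})^+$. Both of these lie in $W^{1,p}(B_r)$ by the Lipschitz chain rule, and this gives equivalence for the same constants $C,\alpha$.

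Each step of your proof of the nontrivial inclusion checks out:
\begin{itemize}
\item Passing to the limit in the integration-by-parts identity for $C^\infty_0(B_r)$-approximants gives $\tilde u\in W^{1,1}(\R^n)$ with $\nabla\tilde u=\mathcal{X}_{B_r}\nabla u$. This lies in $L^p$, so $\tilde u\in W^{1,p}(\R^n)$.
\item The dilates $u_\lambda$ satisfy $\nabla u_\lambda=\lambda^{-1}(\nabla\tilde u)(\cdot/\lambda)$. Continuity of dilations in $L^p$, which is where $p<\infty$ is used, gives $u_\lambda\to\tilde u$ in $W^{1,p}$.
\item Mollifying at a scale below $(1-\lambda)r$ produces $C^\infty_0(B_r)$ approximants, and the diagonal argument closes the proof.
\end{itemize}

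The more common alternative is trace theory on the Lipschitz domain $B_r$. There $W^{1,p}_0(B_r)$ is the kernel of the trace operator, and the $W^{1,1}$- and $W^{1,p}$-traces agree on $W^{1,p}(B_r)$. Your dilation argument avoids that machinery and uses only that balls are star-shaped, which is all the lemma requires.
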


It was shown in \cite{kangasniemi2025continuity} that, under the assumptions of Theorem \ref{open and discrete thm}, the coordinate functions of solutions to \eqref{K,Sigma-quasiregular} are $\alpha$-almost weakly monotone. The techniques outlined therein in fact yield the following estimate for the oscillation of $f_i$,
which plays an important role later in proving that $f$ satisfies the Lusin (N) -property.

\begin{lem}\label{osc lemma}
    Let $\Omega\subset\mathbb{R}^n$ be a domain, let $y_0\in\mathbb{R}^n$, and let $p,q\in[1,\infty]$ with $p^{-1}+q^{-1}<1$.
    Suppose that
    $f\in W^{1,n}_{\rm loc}(\Omega,\mathbb{R}^n)$ satisfies the $(K, \Sigma)$-distortion inequality with defect, where $K:\Omega\to[0,\infty)$ and $\Sigma:\Omega\to[0,\infty)$ are measurable functions such that
    \begin{equation*}
        K\in L^p_{\rm loc}(\Omega) \quad{\rm and}\quad
        \frac{\Sigma}{K}\in L^{q}_{\rm loc}(\Omega).
    \end{equation*}
    For all $i\in\{1,...,n\}$, let $f_i$ be the coordinate function of $f$. Then $f_i$ are $\alpha$-almost weakly monotone with constant $C=(n,p,q,K,\Sigma)$ and $\alpha= 1-p^{-1}-q^{-1}$. Moreover, for each $x$ and a.e. $r \in (0, \dist(x, \R^n \setminus \Omega))$, we have
    \begin{multline}\label{osc_estimate_DIWD}
        \left[\mathop{\rm ess\ osc}_{B(x,r)} f\right]^n \le C(n)r\int_{S(x,r)}|Df|^nd\mathcal{H}^{n-1}\\
        +C(p,q,n)\|K\|_{L^p(B(x,r))}\left\|\frac{\Sigma}{K}\right\|_{L^{q}(B(x,r))}r^{n(1-p^{-1}-q^{-1})}
    \end{multline}
\end{lem}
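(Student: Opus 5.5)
We will prove the oscillation estimate \eqref{osc_estimate_DIWD} first; the $\alpha$-almost weak monotonicity of the coordinate functions then follows by reading it against the boundary conditions. Fix $x$ and $r$. By Fubini and $f\in W^{1,n}_{\loc}$, for a.e.\ $r$ the restriction $f|_{S(x,r)}$ lies in $W^{1,n}(S(x,r))$ with tangential derivative bounded by $|Df|$. For $n$-integrable maps on an $(n-1)$-sphere, the Sobolev embedding on spheres (Morrey, since $n>n-1$) gives
\[
\left[\operatorname{osc}_{S(x,r)} f\right]^n \le C(n)\, r\int_{S(x,r)}|Df|^n\,d\mathcal H^{n-1}.
\]
This produces the first term. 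What remains is to show that the essential oscillation over the ball exceeds the oscillation over the sphere by at most the defect term.

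For the interior control, we follow the strategy of \cite[Theorem 1.6]{kangasniemi2025continuity}. Let $y_1$ be the center of a smallest closed ball $\overline{B}(y_1,\rho)$ containing $f(S(x,r))$, and consider the truncation
\[
u=\bigl(|f-y_1|-\rho\bigr)^+ .
\]
Then $u\in W^{1,n}_0(B(x,r))$. We compare it with the map $F=y_1+\pi\circ(f-y_1)$, where $\pi$ is the radial retraction onto $\overline{B}(0,\rho)$. Since $F=f$ near $S(x,r)$ in the trace sense, the standard null-Lagrangian identity for $W^{1,n}$ Jacobians gives
\[
\int_{B(x,r)} J_f=\int_{B(x,r)} J_F .
\]
On the set $\{|f-y_1|>\rho\}$ the map $F$ takes values in a sphere, so $J_F=0$ there. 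Hence
\[
\int_{\{|f-y_1|>\rho\}} J_f = 0 .
\]
Actually it is cleaner to test with $\Phi(f)$ for the Lipschitz map $\Phi(y)=\bigl(1-\rho/|y-y_1|\bigr)^+ (y-y_1)$, whose Jacobian is nonnegative. One then gets $\int_B J_{\Phi\circ f}=0$, with $J_{\Phi\circ f}\ge c\,|D u|^n$-type lower control coming from the radial part.

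Combining this with the defect inequality $|Df|^n\le KJ_f+\Sigma$ on the superlevel set, the Jacobian term integrates to zero after weighting. Rather than dividing by $K$ (which is only $L^p$), we apply Hölder directly:
\[
\int_{B}\frac{|Du|^n}{K}\ \lesssim\ \int_B\frac{\Sigma}{K}\,\chi_{\{u>0\}} .
\]
From this we need the $L^\infty$ bound on $u$. Write $|Du|=(|Du|^n/K)^{1/n}K^{1/n}$. Then $Du\in L^s$ with
\[
\frac1s=\frac1n+\frac1{np},
\]
and $s>n-1$ is not needed. We instead use the sharper route of \cite{kangasniemi2025continuity}: on a.e.\ sphere $S(x,t)$, $t<r$, apply the same spherical oscillation argument to $u$, iterated over the level sets $\{u>k\}$ in a De Giorgi/Moser fashion using $K\in L^p$ and $\Sigma/K\in L^q$. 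This yields
\[
\|u\|_{L^\infty}^n\le C\|K\|_{L^p}\|\Sigma/K\|_{L^q}\,r^{n(1-1/p-1/q)} .
\]
The scaling exponent matches Hölder's: $\int_B (\Sigma/K)\chi \le \|\Sigma/K\|_q\, r^{n(1-1/q)}$, with $K$ contributing $r^{-n/p}$ after normalization. Since $\operatorname{ess\,osc}_B f\le 2\rho+2\|u\|_\infty$ and $\rho\le\operatorname{osc}_{S}f$, the estimate \eqref{osc_estimate_DIWD} follows.

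For almost weak monotonicity, suppose $(f_i-M)^+\in W^{1,n}_0(B_r)$. We run the same argument with $u=(f_i-M)^+$ in place of the radial truncation, composing $f$ with the Lipschitz map $y\mapsto (y_i-M)^+ e_i+\dots$ projecting onto a half-space, so that the Jacobian integral vanishes. The $L^\infty$ bound then gives $\|u\|_\infty\le Cr^{\alpha}$ with $\alpha=1-p^{-1}-q^{-1}$, and Lemma \ref{W1p weak monotone lemma} lets us work in $W^{1,n}_0$. The same applies to $(m-f_i)^+$.

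The main obstacle is the $L^\infty$ bound for $u$ from the weighted energy estimate with only $K\in L^p$ and no higher integrability. For this step I would import \cite[Theorem 1.6]{kangasniemi2025continuity} directly, whose content is exactly this bound, rather than reprove it.
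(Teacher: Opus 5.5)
\textbf{There is a gap in the oscillation estimate.} Your almost weak monotonicity argument (truncate a single coordinate, $(f_i-M)^+$, and apply \cite[Theorem 1.6]{kangasniemi2025continuity}) is what the paper does. The problem is the oscillation estimate. There you switch to the radial truncation $u=(\abs{f-y_1}-\rho)^+$ and defer the decisive $L^\infty$ bound to \cite[Theorem 1.6]{kangasniemi2025continuity}, saying its content is ``exactly this bound''. It is not. As the paper notes after Proposition~\ref{prop:Linfty_generalization}, that theorem needs the Jacobian term to be $\ip{\nabla\varphi}{E}$ for a \emph{divergence-free} field $E$.

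For a coordinate truncation this structure is automatic: $J_f=\ip{\nabla f_i}{E_i}$ with $E_i$ the $i$-th cofactor row, which is divergence-free. For your radial truncation you would need a divergence-free $E$ with $\ip{\nabla u}{E}=J_f$ on $\{u>0\}$, and none is evident. Pulling back a field $Y$ from the target fails: $Y$ would need radial component $\equiv1$ outside $\overline{B}(y_1,\rho)$, so its flux through $\partial B(y_1,s)$ grows like $s^{n-1}$, and $Y$ cannot be divergence-free.

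Your fallback attempts do not close this step either:
\begin{itemize}
\item $J_{\Phi\circ f}=J_\Phi(f)\,J_f$ has no sign, so there is no lower bound $J_{\Phi\circ f}\gtrsim\abs{Du}^n$.
\item The H\"older step only puts $Du$ in $L^s$ with $s=np/(p+1)<n$, which gives no $L^\infty$ control.
\item The De Giorgi/Moser iteration is never specified.
\end{itemize}

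What your retraction argument actually yields is an integral identity, and you state it only at level $0$, namely $\int_{\{u>0\}}J_f=0$. The $L^\infty$ argument needs $\int_{\{u>t\}}J_f=0$ for \emph{every} $t\ge0$. This is because the Jacobian is integrated against the weight $(\mu_u^+\circ u)^{-\gamma}$, via Lemma~\ref{lem:weighted_zero_integrals}.

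\textbf{How your route could be repaired.} Replacing $\rho$ by $\rho+t$ in the retraction gives the identity at all levels. The paper's Proposition~\ref{prop:Linfty_generalization} needs only these vanishing integrals (take $J=J_f\chi_{\{u>0\}}$ and $A=\Sigma/K$). It then yields $\norm{u}_{L^\infty}^n\le C\norm{K}_{L^p}\norm{\Sigma/K}_{L^q}r^{n(1-1/p-1/q)}$. You would also need a good representative to justify $u,\ \Phi\circ f\in W^{1,n}_0(B(x,r))$ for a.e.\ $r$.

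\textbf{The simpler fix is the paper's.} The radial truncation buys nothing, and the paper does the oscillation estimate coordinatewise as well:
\begin{itemize}
\item For a.e.\ $r$, let $M(r)$ and $m(r)$ be the max and min of a good representative of $f_i$ on $S(x,r)$.
\item Theorem 1.6 bounds $(f_i-M(r))^+$ and $(m(r)-f_i)^+$ in $L^\infty$. Hence $\mathop{\rm ess\ osc}_{B(x,r)}f_i\le M(r)-m(r)+\norm{(f_i-M(r))^+}_{L^\infty}+\norm{(m(r)-f_i)^+}_{L^\infty}$.
\item The spherical Morrey estimate controls $M(r)-m(r)$.
\item Summing over $i$ costs only a dimensional constant.
\end{itemize}
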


\begin{proof}
    Fix a point $x_0\in\Omega$ and a radius $R>0$ such that $\mathbb{B}^n(x_0,R)$ is compactly contained in $\Omega$. For $r\in(0,R)$, denote $B(r)=\mathbb{B}^n(x_0,r)$, $S(r)=\mathbb{S}^{n-1}(x_0,r)$. It follows by parts (1) and (3) of \cite[Lemma 2.2]{kangasniemi2025continuity} that there exists a Sobolev representative $\widetilde{f}$ of $f$ such that, for a.e. $r\in (0,R)$, 
    \begin{equation*}
        M(r):=\sup_{x\in S(r)} \widetilde{f}_i\quad{\rm and}\quad m(r):=\inf_{x\in S(r)} \widetilde{f}_i
    \end{equation*}
    belong to $\widetilde{f}_i(S(r))$, are finite, and we have
    \begin{equation*}
        (f_i|_{B(r)}-M(r))^+\in W^{1,n}_0(B(r))\quad{\rm and}\quad(m-f_i|_{B(r)})^+\in W^{1,n}_0(B(r)).
    \end{equation*}
    
    Let $\varphi:=(f_i-M(r))^+$ or $\varphi:=(m(r)-f_i)^+$ for a fixed $i \in \{1, \dots, n\}$. We then use \cite[Theorem 1.6]{kangasniemi2025continuity}, obtaining that
    \begin{equation}\label{L-infty estimate}
    \| \varphi\|^n_{L^\infty(B(r))}\le C(p,q,n)\|K\|_{L^p(B(r))}\left\|\frac{\Sigma}{K}\right\|_{L^{q}(B(r))}r^{n(1-p^{-1}-q^{-1})}.
    \end{equation}
    It follows that the coordinate function $f_i$ is $\alpha$-almost weakly monotone with constant $C=C(n,p,q,K,\Sigma)$ and $\alpha= 1-p^{-1}-q^{-1}$. 
    
    Moreover, using the fact that
    \begin{align*}
        \mathop{\rm ess\ sup}_{x\in B(r)}f_i(x)
        &\le M(r)+\|\varphi\|_{L^\infty(B(r))}
        \quad{\rm and}\quad\\ 
        \mathop{\rm ess\ inf}_{x\in B(r)}f_i(x)
        &\ge m(r)-\|\varphi\|_{L^\infty(B(r))},
    \end{align*}
    and combining this with \eqref{L-infty estimate} and part (2) of \cite[Lemma 2.2]{kangasniemi2025continuity}, it follows that
    \begin{align}\label{osc fi}
        \left[\mathop{\rm ess\ osc}_{B(r)} f_i\right]^n &=\left[\mathop{\rm ess\ sup}_{x\in B(r)}f_i(x)-\mathop{\rm ess\ inf}_{x\in B(r)}f_i(x)\right]^n
        \\
        \nonumber&\le 2^{n-1}(M(r)-m(r))^n+2^n\|\varphi\|^n_{L^\infty(B(r))}\\
        \nonumber&\le C(n)r\int_{S(r)}|\nabla f_i|^n d\mathcal{H}^{n-1}\\
        \nonumber&\qquad+C(p,q,n)\|K\|_{L^p(B(r))}\left\|\frac{\Sigma}{K}\right\|_{L^{q}(B(r))}r^{n(1-p^{-1}-q^{-1})}.
    \end{align}
    Finally, by summing over $i=1,\dots,n$ on both sides of \eqref{osc fi}, we obtain
    \begin{align*}
            \left[\mathop{\rm ess\ osc}_{B(r)} f\right]^n
            &\le\sum_{i=1}^n \left[\mathop{\rm ess\ osc}_{B(r)} f_i\right]^n\\
            &\le C(n)r\int_{S(r)}|Df|^nd\mathcal{H}^{n-1}\\
            &\hspace{1.5cm}+C(p,q,n)\|K\|_{L^p(B(r))}\left\|\frac{\Sigma}{K}\right\|_{L^{q}(B(r))}r^{n(1-p^{-1}-q^{-1})}.
    \end{align*}
\end{proof}

\subsection{Degree theory and local index}\label{sec: degree and index}
We then recall the definition of the topological degree. For our purposes, it suffices that this definition is valid for sufficiently regular maps. 

\begin{defn}
    Let $\Omega\subset\mathbb{R}^n$ be a domain, let $U$ be an open set compactly contained
    in $\Omega$, and let $f\in C(\Omega,\mathbb{R}^n)\cap W^{1,n}_{\rm loc}(\Omega,\mathbb{R}^n)$. Suppose that $f$ satisfies the Lusin (N)-property. 
    The {\em topological degree} deg $(f,.,U)$ is the unique integer-valued locally constant function on $\mathbb{R}^n\backslash f\partial U$  which satisfies
    \begin{equation}\label{degree formula}
        \int_U(v\circ f(x))J_f(x)dm_n(x)=\int_{\mathbb{R}^n\backslash f\partial U}v(y)\ {\rm deg}(f,y,U)dm_n(y).
    \end{equation}
    for all $v\in L^\infty(\mathbb{R}^n)$. Note that $v\circ f$ may not be measurable, but the quantity $(v\circ f)J_f$ is measurable under our assumptions by e.g. \cite[Proposition 5.22]{fonseca1995degree}.
\end{defn}

We note that the aforementioned topological degree is a special case of a more general construction of the classical topological degree for continuous mappings $f\in C(\Omega,\mathbb{R}^n)$. The equivalence of these concepts follows from ideas similar to \cite[Theorem 5.27]{fonseca1995degree}. We also recall the following standard properties of the topological degree that we use; for proofs, we refer to e.g.\ \cite[Theorems 2.1, 2.3 (2), 2.7]{fonseca1995degree}

\begin{lem}\label{basic degree property}
    Let $\Omega\subset\mathbb{R}^n$ be a domain, let $f\in C(\Omega,\mathbb{R}^n)\cap W^{1,n}_{\rm loc}(\Omega,\mathbb{R}^n)$ satisfy
    the Lusin (N) -property, let $U\subset\Omega$ be open and compactly contained in $\Omega$, and let $y\notin f\partial U$. The topological degree $\deg(f,y,U)$ satisfies the following properties. 
    \begin{enumerate}[noitemsep]
        \item \label{enum:degree_vanishing} (Vanishing outside the image set) If $\deg(f,y,U)=0$, then $y\in fU$.
        \item (Additivity) If $U=\bigcup_{i\in I}U_i$, where $I$ is countable, $U_i$ are mutually disjoint open sets, and $y\notin f\partial U_i$ for every $i\in I$, then
        \begin{equation*}
            \deg(f,y,U)=\sum_{i\in I} \deg(f,y,U_i).
        \end{equation*}
        \item (Excision) If $K\subset U$ is compact and $y\notin fK$, then
        \begin{equation*}
            \deg(f,y,U)=\deg(f,y,U\setminus K).    
        \end{equation*}
        \item (Homotopy invariance) If $g\in C(\Omega,\mathbb{R}^n)\cap W^{1,n}_{\rm loc}(\Omega,\mathbb{R}^n)$ is another map satisfying the Lusin (N) -property, and if $H: U\times I\to\mathbb{R}^n$ is a homotopy from $f$ to $g$ for which $y\notin H((\partial U)\times I)$, then
        \begin{equation*}
            \deg(f,y,U)=\deg(g,y,U).    
        \end{equation*}
    \end{enumerate}
\end{lem}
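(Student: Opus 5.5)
The plan is to reduce everything to the classical Brouwer degree of continuous maps, for which all four properties are textbook facts. The one point that needs care is that the analytic definition \eqref{degree formula} agrees with the classical degree $d(f,y,U)$. Item \eqref{enum:degree_vanishing} should be read in its intended form: if $\deg(f,y,U)\neq 0$, then $y\in fU$.

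\emph{Step 1: identification for continuous test functions.} Fix $f\in C(\Omega,\R^n)\cap W^{1,n}_\loc(\Omega,\R^n)$ and $U\Subset\Omega$. Let $f_k$ be mollifications of $f$. Then $f_k\to f$ uniformly on $\overline U$ and in $W^{1,n}(U)$. Hadamard's inequality gives $J_{f_k}\to J_f$ in $L^1(U)$.

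Let $D$ be a connected component of $\R^n\setminus f\partial U$, and let $V\Subset D$. Uniform convergence gives $V\cap f_k\partial U=\emptyset$ for large $k$. The classical degree is stable under uniform perturbations avoiding the boundary image, so $d(f_k,\cdot,U)=d(f,\cdot,U)$ on $V$ for large $k$.

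For a smooth map the change of variables formula holds, so for $v\in C_c(V)$ we have $\int_U (v\circ f_k)J_{f_k}=\int v\,d(f_k,\cdot,U)$. Letting $k\to\infty$, using $v\circ f_k\to v\circ f$ uniformly and boundedly together with $J_{f_k}\to J_f$ in $L^1$, yields \eqref{degree formula} with $d(f,\cdot,U)$ for all continuous compactly supported $v$ on $\R^n\setminus f\partial U$.

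\emph{Step 2: extension to $v\in L^\infty$.} Both sides of \eqref{degree formula} define signed measures on $\R^n\setminus f\partial U$. The right-hand side is $d(f,\cdot,U)\,dm_n$. The left-hand side is the pushforward $f_\#(J_f\,dm_n|_U)$. Step 1 shows these measures agree on $C_c$, hence on all Borel sets.

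To pass to arbitrary bounded Lebesgue measurable $v$, we need the left side to be unchanged when $v$ is altered on an $m_n$-nullset $N$. Take a Borel nullset $N'\supset N$. The pushforward identity gives $\int_{f^{-1}N'\cap U}J_f=\int_{N'}d(f,\cdot,U)=0$, and this holds for every Borel subset of $N'$. Hence $J_f=0$ a.e.\ on $f^{-1}N'\cap U$. This is where measurability of $(v\circ f)J_f$ from \cite[Proposition 5.22]{fonseca1995degree} and the Lusin (N) -property enter. Uniqueness of a locally constant integer function satisfying \eqref{degree formula} is clear, since its values are recovered by testing against $v=\mathcal X_{B}$ for small balls $B$. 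Therefore $\deg=d$.

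\emph{Step 3: transfer of properties.} With $\deg=d$, the four properties are the classical ones for continuous maps, exactly as in \cite[Theorems 2.1, 2.3 (2), 2.7]{fonseca1995degree}:
\begin{enumerate}[noitemsep]
    \item the solution property: if $y\notin f\overline U$, then $d(f,y,U)=0$;
    \item additivity over countable disjoint open subfamilies, where only finitely many $U_i$ meet the compact set $f^{-1}\{y\}\cap\overline U$;
    \item excision;
    \item homotopy invariance.
\end{enumerate}

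The main obstacle is Step 2, namely justifying \eqref{degree formula} for merely measurable $v$, where $v\circ f$ need not be measurable. I would handle it by the monotone class argument above, using the (N)-property and the cited measurability result to control the contribution of preimages of null sets.
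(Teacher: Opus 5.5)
Your overall route matches the paper's. The paper gives no argument beyond asserting that the analytic degree coincides with the Brouwer degree (by ideas as in \cite[Theorem 5.27]{fonseca1995degree}) and then citing the classical properties. Your Step 1 is a correct way to supply that identification, and you are right that item (1) must be read with $\deg(f,y,U)\neq 0$.

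One inference in Step 2 is invalid, however. From $\int_{U\cap f^{-1}E}J_f=0$ for all Borel $E\subset N'$ you cannot conclude $J_f=0$ a.e.\ on $U\cap f^{-1}N'$. These are signed integrals over whole preimages, so contributions of $J_f^{+}$ and $J_f^{-}$ lying over the same points of $N'$ could cancel. The conclusion is true, but for a different reason. Since $f$ is approximately differentiable a.e., $U$ is a null set plus countably many sets $A_k$ on which $f$ is Lipschitz. The area formula then gives $\int_{A_k\cap f^{-1}N'}\abs{J_f}=\int_{N'}N(f,A_k\cap f^{-1}N',y)\,dy=0$, and the Lusin (N) -property plays no role here.

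Your measure identity also lives only on $\R^n\setminus f\partial U$, whereas a general $v\in L^\infty(\R^n)$ in \eqref{degree formula} also sees $U\cap f^{-1}(f\partial U)$. That set carries no $J_f$-mass when $m_n(f\partial U)=0$; this is where (N) genuinely helps, e.g.\ when $m_n(\partial U)=0$. It can carry mass in general. Take the fold $f(x_1,x')=(\abs{x_1},x')$ and $U=\B^n(0,1)\setminus C$, with $C\subset\B^n(0,1)\cap\{x_1>1/4\}$ closed, of positive measure and with empty interior. The reflection of $C$ lies in $U$, has $J_f=-1$, and is mapped onto $C\subset f\partial U$. Testing with $v=\cX_C$ therefore gives $-m_n(C)$ on the left of \eqref{degree formula} and $0$ on the right.

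None of this affects the lemma, because Step 2 is not needed. The statement presupposes that $\deg(f,\cdot,U)$ exists, so in particular it satisfies \eqref{degree formula} for every $v\in C_c(\R^n\setminus f\partial U)$. By Step 1, so does $d(f,\cdot,U)$. Hence $\int v\,(\deg(f,\cdot,U)-d(f,\cdot,U))=0$ for all such $v$. The two locally constant functions therefore agree a.e., hence everywhere on $\R^n\setminus f\partial U$. Your Step 3, including the finiteness argument for countable additivity, then goes through as written.
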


We end this section by recalling how the topological degree leads to a definition of the local index.

\begin{defn}
    Let $\Omega\subset\mathbb{R}^n$ be a domain, $f\in C(\Omega,\mathbb{R}^n)\cap W^{1,n}_{\rm loc}(\Omega,\mathbb{R}^n)$ satisfies the Lusin (N) -property. Suppose that $x \in \Omega$ is an isolated point of $f^{-1}\{f(x)\}$. Then there exists an open neighborhood $U$ of $x$ such that $U$ is compactly contained in $\Omega$ and $\overline{U}\cap f^{-1}\{f(x)\}=\{x\}$. The excision property of Lemma \ref{basic degree property} yields that if $U, V \subset \Omega$ are open, compactly contained in $\Omega$, and satisfy $U \cap f^{-1}\{f(x)\} = \{x\} = V \cap f^{-1}\{f(x)\}$, then $\deg(f, f(x), U) = \deg(f, f(x), V)$. This unique value of $\deg(f, f(x), U)$ for $U \subset \Omega$ with $U \cap f^{-1}\{f(x)\} = \{x\}$ is called the \emph{local index $i(x, f)$ of $f$ at $x$}; see e.g.\ \cite[Definition 2.8]{fonseca1995degree} for details.
\end{defn}

\section{A refinement of an $L^\infty$-estimate}

In our argument later on in this article, we use a variant of the estimate \cite[Theorem 1.6]{kangasniemi2025continuity}. Although this result is already originally stated in a rather general form, this form is regardless insufficient for us, since it is stated for functions in a $W^{1,n}_0$-space.

We have already observed multiple other potential uses for this slight generalization of \cite[Theorem 1.6]{kangasniemi2025continuity}. Thus, in anticipation of these other uses, we state an abstract, technical version of this generalization not tied to our specific application. The proof of this version largely follows the same steps as the proof of the original result, with a few minor changes.

\begin{prop}\label{prop:Linfty_generalization}
	Let $n \ge 2$, let $\Omega \subset \R^n$ be open and bounded, let $p,q \in (1, \infty]$ with $p^{-1} + q^{-1} < 1$, and let $\varphi \in W^{1,1}_0(\Omega)$ with $\varphi \ge 0$. Suppose that there exist functions $K \in L^p(\Omega)$, $A \in L^q(\Omega)$, and $J \in L^1(\Omega)$ such that $K > 0$, $A \ge 0$, 
	\begin{align}
		\abs{\nabla \varphi(x)}^n &\le K(x)(J(x) + A(x)) \label{eq:Linfty_generalization_dist}
		&&\text{for a.e.\ }x \in \Omega, \text{ and}\\
		&\int_{\varphi^{-1}(t, \infty)} J = 0 \label{eq:Linfty_generalization_integral}
		&&\text{for all }t \in [0, \infty).
	\end{align}
	Then
	\[
		\norm{\varphi}_{L^\infty(\Omega)}^n \le C(n,p,q) 
			\norm{K}_{L^p(\Omega)} \norm{A}_{L^q(\Omega)} [m_n(\Omega)]^{1-\frac{1}{p}-\frac{1}{q}}.
	\]
\end{prop}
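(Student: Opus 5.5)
The plan is a Talenti-type level-set argument. We combine the isoperimetric inequality on superlevel sets of $\varphi$ with the coarea formula, and apply the distortion inequality \eqref{eq:Linfty_generalization_dist} only after integrating it over \emph{strips} $\{t < \varphi \le t+h\}$. On such strips the bad term $J$ integrates to zero by \eqref{eq:Linfty_generalization_integral}, since that strip integral is the difference of two integrals that vanish. Throughout we write $\mu(t) = m_n(\{\varphi > t\})$, so $\mu(0)\le m_n(\Omega)$. We also introduce the nonincreasing functions
\[
	\Phi_A(t) = \int_{\{\varphi>t\}} A,\qquad \Phi_{K}(t) = \int_{\{\varphi > t\}} K^{\frac{1}{n-1}},\qquad \Psi_A(t) = \int_{\{\varphi>t\}} A^q,\qquad \Psi_K(t)=\int_{\{\varphi>t\}} K^p.
\]
These are all differentiable a.e., with $-\Psi_A'$ and $-\Psi_K'$ integrating over $(0,\infty)$ to at most $\norm{A}_q^q$ and $\norm{K}_p^p$. (For $q$ or $p=\infty$ the obvious modifications are made.)

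\emph{Step 1 (strip estimate).} For $t\ge 0$ and $h>0$, integrating \eqref{eq:Linfty_generalization_dist} divided by $K$ over the strip gives
\[
	\int_{\{t<\varphi\le t+h\}} \frac{\abs{\nabla\varphi}^n}{K} \le \Phi_A(t)-\Phi_A(t+h),
\]
since $\int J$ over the strip vanishes. Hölder's inequality then yields
\[
	\int_{\{t<\varphi\le t+h\}}\abs{\nabla \varphi} \le \bigl(\Phi_A(t)-\Phi_A(t+h)\bigr)^{\frac1n}\bigl(\Phi_K(t)-\Phi_K(t+h)\bigr)^{\frac{n-1}{n}}.
\]
Dividing by $h$ and letting $h\to 0$, the coarea formula for $\varphi\in W^{1,1}_0$ identifies the left side with $\cH^{n-1}(\varphi^{-1}\{t\})$ for a.e.\ $t$. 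This quantity is at least the perimeter of $\{\varphi>t\}$, which by the isoperimetric inequality is at least $c(n)\mu(t)^{(n-1)/n}$; here we use that $\varphi$ vanishes on $\partial\Omega$ in the Sobolev sense, and we work with the zero extension. Hence for a.e.\ $t$,
\[
	c(n)\,\mu(t)^{\frac{n-1}{n}} \le (-\Phi_A'(t))^{\frac1n}(-\Phi_K'(t))^{\frac{n-1}{n}}.
\]

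\emph{Step 2 (trading for $L^p$, $L^q$ norms).} Apply Hölder's inequality again on strips before differentiating. This gives $-\Phi_A' \le (-\Psi_A')^{1/q}(-\mu')^{1-1/q}$ and $-\Phi_K' \le (-\Psi_K')^{1/(p(n-1))}(-\mu')^{1-1/(p(n-1))}$ a.e., where $\mu'$ is the a.e.\ derivative. The singular part of $\mu$ only makes the strip measures larger, so it goes the right way and can be discarded. Combining with Step 1,
\[
	c(n) \le \mu^{-\frac{n-1}{n}}(-\Psi_A')^{\frac{1}{nq}}(-\Psi_K')^{\frac{1}{np}}(-\mu')^{\frac1r},\qquad \frac1r = 1-\frac{1}{nq}-\frac{1}{np}.
\]
Integrate over $t\in(0,T)$ with $T<\norm{\varphi}_{L^\infty}$, so that $\mu>0$ there, and apply Hölder with exponents $nq$, $np$, $r$:
\[
	c(n)\,T \le \norm{A}_{L^q}^{\frac1n}\norm{K}_{L^p}^{\frac1n}\Bigl(\int_0^T (-\mu')\,\mu^{-\frac{(n-1)r}{n}}\,dt\Bigr)^{\frac1r}.
\]

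\emph{Step 3 (the exponent).} The last integral is at most $\int_0^{\mu(0)} s^{-(n-1)r/n}\,ds$, because $\mu$ is monotone and its absolutely continuous part is bounded by the change of variables. This integral is finite exactly when $(n-1)r/n<1$, which is equivalent to $p^{-1}+q^{-1}<1$. Its value is a constant times $\mu(0)^{1-(n-1)r/n}$. Raising to the power $1/r$ gives $\mu(0)^{1/r-(n-1)/n} = \mu(0)^{\frac1n(1-\frac1p-\frac1q)}$. Raising everything to the $n$-th power and letting $T\to\norm{\varphi}_\infty$ gives the claim, using $\mu(0)\le m_n(\Omega)$.

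I expect the main obstacle to be Step 1. The cancellation of $J$ is available only in integrated form over superlevel sets, so it cannot be used pointwise. All differentiations must therefore be justified for the monotone functions $\Phi$, $\Psi$, and $\mu$, and we need the inequalities between their a.e.\ derivatives. We also need the lower semicontinuity step that links $h^{-1}\int_{\text{strip}}\abs{\nabla\varphi}$ to the perimeter of $\{\varphi>t\}$ for a merely $W^{1,1}_0$ function. I would first prove everything in difference-quotient form and pass to limits only at the end, using Lebesgue's differentiation theorem for monotone functions.
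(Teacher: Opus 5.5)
Your proof is correct, but it is organized differently from the paper's.

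\textbf{The paper's route.} The paper works with integrals over $\Omega$ against the weight $(\mu^+_\varphi\circ\varphi)^{-\gamma}$, for a fixed $\gamma\in(p^{-1},1-q^{-1})$. It starts from the superlevel Sobolev inequality \eqref{eq:superlevel_Sobolev} and splits off $K$ by H\"older's inequality in $x$. It then controls the resulting powers of $\mu^+_\varphi\circ\varphi$ by \eqref{eq:distr_upper_est}. Finally, it removes $J$ through Lemma \ref{lem:weighted_zero_integrals}, a weighted form of the cancellation proved by a staircase approximation of the non-decreasing weight; the integrability hypothesis of that lemma is checked using the pointwise bound $J^-\le A$.

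\textbf{Your route.} You use the cancellation only in its raw form on strips $\{t<\varphi\le t+h\}$, where it follows at once from \eqref{eq:Linfty_generalization_integral}. So $J$ is eliminated before any weight appears. The weight $\mu^{-(n-1)/n}$ enters only afterwards, through the isoperimetric inequality level by level and H\"older's inequality in $t$. The role of \eqref{eq:distr_upper_est} is taken by the one-dimensional bound $\int_0^T(-\mu')\mu^{-\theta}\,dt\le G(\mu(0))$ with $G(s)=s^{1-\theta}/(1-\theta)$. This bound holds because $G\circ\mu$ is monotone, so the integral of its a.e.\ derivative is dominated by its total increment.

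\textbf{What each approach buys.} Your argument is more self-contained. It needs neither the weighted zero-integral lemma nor $J^-\le A$, and it never multiplies $J$ by an unbounded weight. The threshold $p^{-1}+q^{-1}<1$ shows up as the integrability of $s^{-\theta}$ at $0$, mirroring the nonemptiness of the interval for $\gamma$ in the paper. You also get $m_n(\{\varphi>0\})$ in place of $m_n(\Omega)$. The price is the bookkeeping with a.e.\ derivatives of $\Phi_A,\Phi_K,\Psi_A,\Psi_K,\mu$. You handle this correctly by proving every inequality for difference quotients before passing to limits. The paper's version is shorter because the superlevel Sobolev inequality and the distribution estimate were already available from the earlier continuity work.

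\textbf{A simplification.} The step you flag as the main obstacle is easier than you expect. For the zero extension of $\varphi$, the BV coarea formula gives the exact identity $\int_{\{t<\varphi\le t+h\}}\abs{\nabla\varphi}=\int_t^{t+h}P(\{\varphi>s\})\,ds$. Lebesgue differentiation of the $L^1$ function $s\mapsto P(\{\varphi>s\})$ then suffices, and no lower-semicontinuity argument is needed.
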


In particular, beyond the lowered Sobolev exponent of $\varphi$, the main difference here is that in \cite[Theorem 1.6]{kangasniemi2025continuity}, $J$ is the inner product of $\nabla \varphi$ with a divergence-free vector field, whereas in Proposition \ref{prop:Linfty_generalization}, $J$ can be otherwise unrelated to $\nabla \varphi$ as long as it satisfies \eqref{eq:Linfty_generalization_dist} and \eqref{eq:Linfty_generalization_integral}.

The first step in the proof is a version of \cite[Lemma 2.4]{kangasniemi2025continuity}. The proof is practically identical to the proof of the original version, though we recall the argument for the convenience of the reader. Here, for a function $J \colon \Omega \to \R$, we denote $J^{+} = \max(J, 0)$ and $J^{-} = \max(-J, 0)$.

\begin{lem}\label{lem:weighted_zero_integrals}
	Let $n \ge 2$, let $\Omega \subset \R^n$ be open and bounded, let $\varphi \colon \Omega \to [0, \infty]$ be measurable, and let $J \in L^1(\Omega)$ be such that \eqref{eq:Linfty_generalization_integral} holds. Let $F \colon [0, \infty] \to [0, \infty]$ be a non-decreasing left-continuous function such that $F(\varphi(x)) < \infty$ for a.e.\ $x \in \Omega$. Suppose that
	\[
		\int_{\varphi^{-1}(0, \infty)} F(\varphi) J^{+} < \infty 
		\quad \text{or} \quad
		\int_{\varphi^{-1}(0, \infty)} F(\varphi) J^{-}  < \infty. 
	\] 
	Then $F(\varphi) J \in L^1(\Omega)$ and
	\[
		\int_{\varphi^{-1}(0, \infty)} F(\varphi) J = 0.
	\]
\end{lem}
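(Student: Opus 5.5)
The plan is a layer-cake (Cavalieri) representation of $F(\varphi)$ in terms of the superlevel sets $\varphi^{-1}(t,\infty)$, combined with Fubini. Since $F$ is non-decreasing, left-continuous and nonnegative, for every $s \in (0,\infty)$ we can write $F(s) = F(0) + \mu_F([0,s))$, where $\mu_F$ is the Lebesgue--Stieltjes measure on $[0,\infty)$ associated to $F$. Left-continuity is what makes the half-open interval $[0,s)$ the correct one. Hence
\[
	F(\varphi(x)) = F(0) + \int_{[0,\infty)} \mathcal{X}_{\{\varphi > t\}}(x) \dd \mu_F(t)
\]
for every $x$ with $\varphi(x) \in (0,\infty)$. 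At points where $\varphi(x)=\infty$, the condition $F(\varphi(x))<\infty$ a.e.\ still makes the formula valid a.e. (or $F(\infty)$ is finite and one uses $[0,\infty)$ with $\{\varphi>t\}$), and these points lie in $\{\varphi>t\}$ for all $t$. I will restrict all integrals to $\varphi^{-1}(0,\infty]$ consistently; the statement's $\varphi^{-1}(0,\infty)$ can be handled since \eqref{eq:Linfty_generalization_integral} also gives information on $\{\varphi=\infty\}$ by taking $t\to\infty$ via dominated convergence, which shows $\int_{\{\varphi=\infty\}}J=0$.

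Multiplying by $J^{\pm}$ and applying Tonelli, which is legitimate since everything is nonnegative, gives
\[
	\int_{\varphi^{-1}(0,\infty)} F(\varphi) J^{\pm} = F(0)\int_{\{\varphi>0\}} J^{\pm} + \int_{[0,\infty)} \left( \int_{\{\varphi>t\}} J^{\pm} \right) \dd\mu_F(t).
\]
The hypothesis \eqref{eq:Linfty_generalization_integral} says exactly that $\int_{\{\varphi>t\}} J^{+} = \int_{\{\varphi>t\}} J^{-}$ for every $t \ge 0$, and these quantities are finite since $J \in L^1$. The two right-hand sides, for $+$ and $-$, therefore coincide term by term. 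It follows that $\int F(\varphi)J^{+} = \int F(\varphi)J^{-}$ as elements of $[0,\infty]$. If one of them is finite, so is the other. Then $F(\varphi)J \in L^1$ on $\varphi^{-1}(0,\infty)$, and subtracting gives $\int_{\varphi^{-1}(0,\infty)} F(\varphi)J = 0$. Outside $\varphi^{-1}(0,\infty)$, that is on $\{\varphi=0\}$, $F(\varphi)=F(0)$ is finite, so $F(\varphi)J$ is integrable there too. This yields $F(\varphi)J\in L^1(\Omega)$.

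The main point of care is the layer-cake step: making sure the strict superlevel sets $\{\varphi>t\}$ match the left-continuity of $F$, and handling possibly infinite $\varphi$ and infinite values of $F$. I expect this to be routine bookkeeping rather than a genuine obstacle. The essential mechanism is simply that the hypothesis equates the positive and negative parts on every superlevel set, and Tonelli transfers that equality to the weighted integrals without any cancellation issues.
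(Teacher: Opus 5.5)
Your proof is correct, but it takes a genuinely different route from the paper.

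\emph{The paper's route.} It approximates $F$ uniformly within $\eps$ by a staircase function $G$. The jump points $0=t_1<t_2<\dots<t_\omega<t_{\omega+1}=\infty$ of $G$ form an ordinal-indexed sequence, obtained from an auxiliary lemma in \cite{kangasniemi2025continuity}. Applying \eqref{eq:Linfty_generalization_integral} at consecutive $t_i,t_{i+1}$ shows that $J$ integrates to zero over each slab $\varphi^{-1}(t_i,t_{i+1}]$. Hence $\int G(\varphi)J^+=\int G(\varphi)J^-$, so $\bigl|\int F(\varphi)J^+-\int F(\varphi)J^-\bigr|\le\eps\norm{J}_{L^1(\Omega)}$, and one lets $\eps\to 0$.

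\emph{Your route.} You represent $F(\varphi)$ exactly as $F(0)$ plus a superposition of indicators of strict superlevel sets against the Lebesgue--Stieltjes measure $\mu_F$; left-continuity is precisely what gives $\mu_F([0,s))=F(s)-F(0)$. Tonelli then turns the hypothesis into the identity $\int F(\varphi)J^+=\int F(\varphi)J^-$ in $[0,\infty]$ in one step. This is the continuous form of the paper's discrete summation by parts.

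\emph{What each buys.} Your version needs no approximation lemma and no limiting argument, and it shows immediately that finiteness of one side forces finiteness of the other. The paper's version works only with sums of indicator functions, so it avoids the measure-theoretic check you deferred. Tonelli needs $\sigma$-finiteness of $\mu_F$, which fails if $F$ becomes infinite at a finite point. This is harmless here: restrict $\mu_F$ to $[0,b)$ with $b=\sup\{s:F(s)<\infty\}$, where it is locally finite. Since $F(\varphi)<\infty$ a.e., the representation is only ever needed at values $s\le b$ with $F(s)<\infty$.

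\emph{One remark to drop.} If $\varphi^{-1}(t,\infty)$ excludes $\{\varphi=\infty\}$, these sets decrease to $\emptyset$ as $t\to\infty$. Dominated convergence then says nothing about $\int_{\{\varphi=\infty\}}J$. You do not need it anyway. Integrate over $\{0<\varphi<\infty\}$ from the start, where $\mathcal{X}_{\{0<\varphi<\infty\}}\mathcal{X}_{\{\varphi>t\}}=\mathcal{X}_{\varphi^{-1}(t,\infty)}$, so the hypothesis applies verbatim; this is what your displayed Tonelli identity uses. Integrability of $F(\varphi)J$ on $\{\varphi=0\}$ and on $\{\varphi=\infty\}$ is trivial, since $F(\varphi)$ is a finite constant on each of these sets unless the set is null.
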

\begin{proof}
	The case where $F$ is constant is trivial by using \eqref{eq:Linfty_generalization_integral} with $t = 0$. Otherwise, fix $\eps > 0$. By using \cite[Lemma 2.1]{kangasniemi2025continuity}, we find a piecewise constant staircase approximation $G \colon [0, \infty] \to [0, \infty]$ of $F$; more precisely, we have an increasing (ordinal-indexed) sequence $0 = t_1 < t_2 < \dots < t_{\omega} < t_{\omega+1} = \infty$ such that the function 
	\[
		G \colon [0, \infty] \to [0, \infty],\quad
		\begin{cases}
			G(0) = F(0),\\
			G(t) = F(t_{i+1}), & \text{if } t \in (t_i, t_{i+1}], i \in \Z_{> 0}\\
			G(t_\omega) = F(t_\omega),\\
			G(t) = F(t) = F(\infty), & \text{if } t \in (t_\omega, t_{\omega+1}].
		\end{cases}
	\]
	satisfies $\abs{G(t) - F(t)} < \eps$ for all $t \in [0, \infty]$.
	
	Now, for all $i \in \{1, \dots, \omega\}$, by applying \eqref{eq:Linfty_generalization_integral} with $t = t_i$ and $t = t_{i+1}$, we observe that the integral of $J$ over $\varphi^{-1} (t_i, t_{i+1}]$ vanishes. Thus, for all such $i$, we have
	\begin{multline*}
		\int_{\varphi^{-1} (t_i, t_{i+1}]} G(\varphi) J^{+}
		= F(t_{i+1}) \int_{\varphi^{-1} (t_i, t_{i+1}]} J^{+}\\
		= F(t_{i+1}) \int_{\varphi^{-1} (t_i, t_{i+1}]} J^{-}
		= \int_{\varphi^{-1} (t_i, t_{i+1}]} G(\varphi) J^{-}.
	\end{multline*}
	Note that if $i = \omega$ and $F(t_{i+1}) = \infty$, then the result of the above computation remains valid; indeed, since $F(t) = F(\infty) = \infty$ for $t \in (t_\omega, t_{\omega + 1}]$, and since $F(\varphi(x)) < \infty$ for a.e.\ $x \in \Omega$, it follows that $\varphi^{-1} (t_\omega, t_{\omega + 1}]$ is a null-set. Moreover, by combining all the above identities with a use of \eqref{eq:Linfty_generalization_integral} for $t = 0$, we also get an analogous integral identity over $\varphi^{-1}\{t_\omega\}$.
	
	Thus, it follows that
	\[
		\int_{\varphi^{-1} (0, \infty)} G(\varphi) J^{+}
		= \int_{\varphi^{-1} (0, \infty)} G(\varphi) J^{-}
	\]
	But now, since $\abs{G(t) - F(t)} \le \eps$ for all $t \in [0, \infty]$, we get
	\[
		\abs{\int_{\varphi^{-1} (0, \infty)} 
			F(\varphi) J^{+} - \int_{\varphi^{-1} (0, \infty)} F(\varphi) J^{-}}
		\le \eps \int_\Omega \abs{J},
	\]
	where the difference of integrals is well defined as at least one term is finite. Since $J \in L^1(\Omega)$, the claim follows by letting $\eps \to 0$.
\end{proof}

The function $F$ this lemma will be applied on is the distribution function of $\varphi$. For this, suppose that $\Omega \subset \R^n$ is open and bounded, and $\varphi \colon \Omega \to [0, \infty]$ is measurable. The \emph{upper distribution function} $\mu^+_\varphi \colon [0, \infty] \to [0, \abs{\Omega}]$ is defined by
\[
	\mu^{+}_\varphi(t) = m_n(\{x \in \Omega : \varphi(x) \ge t\}) = m_n(\varphi^{-1} [t, \infty])
\]
The function $\mu^{+}_\varphi$ is hence a non-increasing left-continuous Borel function on $[0, \infty]$.

For the proof, we require two estimates for this function. The first, shown e.g.\ in \cite[Lemma 3.2]{kangasniemi2025continuity}, yields that for every $\gamma \in (0, 1)$, we have
\begin{equation}\label{eq:distr_upper_est}
	\int_\Omega \frac{1}{(\mu_\varphi^{+} \circ \varphi)^\gamma} 
	\le \frac{[m_n(\Omega)]^{1-\gamma}}{1 - \gamma}.
\end{equation}
This estimate is sensitive to us using the upper distribution function, as for the \emph{lower distribution function} $\mu^{-}_\varphi$ given by $\mu^{-}_\varphi(t) = m_n(\{x \in \Omega : \varphi(x) > t\})$, the direction of the estimate \eqref{eq:distr_upper_est} reverses. 

The other estimate we require is a Sobolev-type inequality using the distribution function given in \cite[Proposition 1.7]{kangasniemi2025continuity}. Namely, if we additionally have $\varphi \in W^{1,1}_0(\Omega)$, then
\begin{equation}\label{eq:superlevel_Sobolev}
	\norm{\varphi}_{L^\infty(\Omega)} \le C(n) \int_\Omega \frac{\abs{\nabla \varphi}}{(\mu_\varphi^{+} \circ \varphi)^\frac{n-1}{n}}.
\end{equation}
Here, $C(n)$ is in fact exactly the constant of the $n$-dimensional isoperimetric inequality. See also the recent work of Harjulehto and Hurri-Syrj\"anen \cite[Section 4]{Harjulehto-HurriSyrjanen} for a generalization using Hausdorff content.

We are now ready to prove Proposition \ref{prop:Linfty_generalization}. The proof is mostly identical with the proof of \cite[Theorem 1.6]{kangasniemi2025continuity}, with only a few minor differences.

\begin{proof}[Proof of Proposition \ref{prop:Linfty_generalization}]
	We fix a $\gamma \in (p^{-1}, 1 - q^{-1})$, noting that the interval is non-empty by our assumption, and denote $U = \varphi^{-1}(0, \infty)$. We first observe that since $\abs{\nabla \varphi}$ is non-negative and $K$ is positive, \eqref{eq:Linfty_generalization_dist} in fact implies that $J + A \ge 0$ a.e.\ in $\Omega$. From this, it follows that
	\[
		J^{-} \le A \qquad \text{a.e.\ in }\Omega.
	\]
	Thus, we have by H\"older's inequality that
	\[
		\int_{U} \frac{J^{-}}{(\mu_\varphi^{+} \circ \varphi)^\gamma}
		\le \int_{U} \frac{A}{(\mu_\varphi^{+} \circ \varphi)^\gamma}
		\le \norm{A}_{L^q(\Omega)} \left( \int_\Omega 
			\frac{1}{(\mu_\varphi^{+} \circ \varphi)^\frac{\gamma q}{q-1}} \right)^{1 - \frac{1}{q}}. 
	\]
	Since $0 < \gamma q /(q-1) < 1$ by our assumptions, we may thus apply \eqref{eq:distr_upper_est} to conclude that
	\begin{multline}\label{eq:Linfty_norm_estimate_step_1}
		\int_{U} \frac{J^{-}}{(\mu_\varphi^{+} \circ \varphi)^\gamma}
		\le \int_{U} \frac{A}{(\mu_\varphi^{+} \circ \varphi)^\gamma}\\
		\le C(n, q, \gamma) \norm{A}_{L^q(\Omega)} [m_n(\Omega)]^{1 - \frac{1}{q} - \gamma} < \infty.
	\end{multline}
	
	Following these initial steps, we begin the main branch of the computation. We first apply \eqref{eq:superlevel_Sobolev} to obtain that
	\begin{align*}
		\norm{\varphi}_{L^\infty(\Omega)} &\le C(n) \int_\Omega \frac{\abs{\nabla \varphi}}{(\mu_\varphi^{+} \circ \varphi)^\frac{n-1}{n}}\\
		&\le C(n) \int_\Omega \frac{\abs{\nabla \varphi}}{K^\frac{1}{n} (\mu_\varphi^{+} \circ \varphi)^\frac{\gamma}{n}}
		\cdot K^\frac{1}{n} \cdot \frac{1}{(\mu_\varphi^{+} \circ \varphi)^{\frac{n-1-\gamma}{n}}}.
	\end{align*}
	Since $n \ge 2$ and $\gamma < 1$, we have $n - 1 - \gamma > 0$. Now, we use H\"older's inequality with exponents $n$, $pn$, and $np/(np - p - 1)$, noting that $np - p - 1 \ge 2p - p - 1 > 0$ so the final exponent is indeed in $[1, \infty)$. We get that
	\[
		\norm{\varphi}_{L^\infty(\Omega)}^n\\
		\le C(n) \norm{K}_{L^p(\Omega)} \left( \int_\Omega \frac{1}{(\mu_\varphi^{+} \circ \varphi)^\frac{n - 1 - \gamma}{n - 1 - 1/p}} \right)^{n - 1 - \frac{1}{p}} 
		\int_\Omega \frac{\abs{\nabla \varphi}^n}{K (\mu_\varphi^{+} \circ \varphi)^\gamma}. 
	\]
	The exponent $(n - 1 - \gamma)/(n - 1 - 1/p)$ is again between $0$ and $1$, so we can use \eqref{eq:distr_upper_est}, obtaining that
	\begin{equation}\label{eq:Linfty_norm_estimate_step_2}
		\norm{\varphi}_{L^\infty(\Omega)}^n\\
		\le C(n,p,\gamma) \norm{K}_{L^p(\Omega)} \abs{\Omega}^{\gamma - \frac{1}{p}} \int_\Omega \frac{\abs{\nabla \varphi}^n}{K (\mu_\varphi^{+} \circ \varphi)^\gamma}.
	\end{equation}
	
	Now, we estimate the final term in \eqref{eq:Linfty_norm_estimate_step_2}. We first note that $\varphi$ is identically zero in $\Omega \setminus U$. Thus, $\nabla\varphi(x) = 0$ for a.e.\ $x \in \Omega \setminus U$. It follows using \eqref{eq:Linfty_generalization_dist} that
	\[
		\int_\Omega \frac{\abs{\nabla \varphi}^n}{K (\mu_\varphi^{+} \circ \varphi)^\gamma}
		= \int_U \frac{\abs{\nabla \varphi}^n}{K (\mu_\varphi^{+} \circ \varphi)^\gamma}
		\le \int_U \frac{J}{(\mu_\varphi^{+} \circ \varphi)^\gamma}
		+ \int_U \frac{A}{(\mu_\varphi^{+} \circ \varphi)^\gamma}.
	\]
	Now, due to \eqref{eq:Linfty_norm_estimate_step_1}, Lemma \ref{lem:weighted_zero_integrals} applies and yields that the $J$-term vanishes. We also use \eqref{eq:Linfty_norm_estimate_step_1} to estimate the $A$-term, obtaining that
	\begin{equation}\label{eq:Linfty_norm_estimate_step_3}
		\int_\Omega \frac{\abs{\nabla \varphi}^n}{K (\mu_\varphi^{+} \circ \varphi)^\gamma}
		\le \int_U \frac{A}{(\mu_\varphi^{+} \circ \varphi)^\gamma}
		\le C(n, q, \gamma) \norm{A}_{L^q(\Omega)} [m_n(\Omega)]^{1 - \frac{1}{q} - \gamma}.
	\end{equation}
	By chaining together \eqref{eq:Linfty_norm_estimate_step_2} and \eqref{eq:Linfty_norm_estimate_step_3}, it follows that
	\[
		\norm{\varphi}_{L^\infty(\Omega)}^n
		\le C(n,p,q,\gamma) \norm{K}_{L^p(\Omega)} \norm{A}_{L^q(\Omega)} 
		[m_n(\Omega)]^{1 - \frac{1}{q} - \frac{1}{p}}.
	\]
	As the choice of $\gamma$ depended only on $p$ and $q$, the claim follows.
\end{proof}

\section{The Lusin (N) -property for solutions of \eqref{K,Sigma-distortion}}
In this section, we prove Theorem \ref{thm:lusin_N}, which yields a condition for when a map satisfying \eqref{K,Sigma-distortion} has the Lusin (N) -property. Our approach is a combination of an argument by Mal\'y and Martio \cite[Theorem A]{maly1995lusin} that applies to mappings of finite distortion, and the oscillation estimate recalled in Lemma \ref{osc lemma}.

Recall that if $\Omega\subset\mathbb{R}^n$ is a domain and $f:\Omega\to\mathbb{R}^n$ is continuous, then $f$ satisfies the {\em Lusin (N) -property} if, for every $m_n$-nullset $E\subset\Omega$, the image $f E$ is an $m_n$-nullset. Our next objective is to prove that, under the assumptions of Theorem \ref{open and discrete thm}, any solution of the distortion inequality with defect \eqref{K,Sigma-distortion} satisfies the Lusin (N) -property. 

We first isolate a convenient lemma which is included in the proof of \cite[Theorem A]{maly1995lusin} by Mal\'y and Martio.

\begin{lem}\label{lem:Maly-Martio_lemma}
    Let $\Omega \subset \R^n$ be open, and let $f \in W^{1,n}_{\rm loc}(\Omega, \R^m)$ with $m \in \Z_{>0}$. Then for every $x \in \Omega$, there exists a family of radii $R(x) \subset (0, \dist(x, \R^n \setminus \Omega))$ such that $R(x) \cap (0, \delta)$ has positive measure for every $\delta > 0$, and we have
    \begin{equation}\label{eq:Maly-Martio_estimate}
        r \int_{S(x, r)} \lvert D f \rvert^n \, d\mathcal H^{n-1}
        \le C(n) \int_{B(x, r)} \left( \lvert D f \rvert^n + 1 \right)  
    \end{equation}
    for every $r \in R(x)$.
\end{lem}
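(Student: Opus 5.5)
Fix $x\in\Omega$ and set $u=\abs{Df}^n\in L^1_\loc(\Omega)$. For $r\in(0,r_0)$ with $r_0<\dist(x,\R^n\setminus\Omega)$, define
\[
	\Phi(r)=\int_{B(x,r)}\left(u+1\right).
\]
By Fubini's theorem in polar coordinates, $\Phi$ is absolutely continuous and, for a.e.\ $r$,
\[
	\Phi'(r)=\int_{S(x,r)}\left(u+1\right)d\mathcal H^{n-1}.
\]
We aim to show that the bad set
\[
	E=\{r\in(0,r_0): r\Phi'(r)>C\Phi(r)\}
\]
cannot have full measure in any interval $(0,\delta)$. It then suffices to take $R(x)=(0,r_0)\setminus E$, restricted to the full-measure set of $r$ where the above formula for $\Phi'(r)$ holds. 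On $R(x)$ we get
\[
	r\int_{S(x,r)}\abs{Df}^n\,d\mathcal H^{n-1}\le r\Phi'(r)\le C\Phi(r),
\]
which is exactly \eqref{eq:Maly-Martio_estimate}.

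The key step is a logarithmic growth argument. Suppose, towards a contradiction, that $E\cap(0,\delta)$ has full measure in $(0,\delta)$ for some $\delta>0$. Since $\Phi(r)\ge m_n(B(x,r))>0$ for $r>0$, the function $\log\Phi$ is absolutely continuous on every $[s,\delta]$ with $0<s<\delta$. Then $(\log\Phi)'(r)>C/r$ for a.e.\ $r\in(0,\delta)$, and integrating from $s$ to $\delta$ gives
\[
	\Phi(s)\le\Phi(\delta)\,(s/\delta)^{C}.
\]
On the other hand, $\Phi(s)\ge m_n(B(x,s))=\omega_n s^n$. If we choose $C=C(n)>n$, say $C=n+1$, then $\omega_n s^n\le\Phi(\delta)\delta^{-C}s^{C}$ fails as $s\to0$, which is the desired contradiction. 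So for every $\delta>0$ the set $(0,\delta)\setminus E$ has positive measure. This is precisely where the $+1$ in \eqref{eq:Maly-Martio_estimate} is used: it gives the polynomial lower bound $\Phi(s)\gtrsim s^n$, which beats the faster decay forced on the bad set.

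The main subtlety is bookkeeping rather than substance. First, we need absolute continuity of $\Phi$ so that the differential inequality integrates correctly. Second, the ``a.e.\ $r$'' sets have to be intersected carefully: the set where the polar-coordinate formula for $\Phi'$ holds, and the complement of $E$. Both are routine consequences of $u\in L^1(B(x,r_0))$. The constant obtained is $C(n)=n+1$, and any constant $C>n$ would do. If desired, $R(x)$ can additionally be intersected with any prescribed full-measure set of radii, for instance the radii at which Lemma \ref{osc lemma} applies, without losing positivity of measure near $0$.
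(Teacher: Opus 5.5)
Your proof is correct and follows essentially the same route as the paper. The paper applies an elementary lemma to the absolutely continuous function $\omega(r)=\int_{B(x,r)}\abs{Df}^n$: if $r\omega'(r)>n\omega(r)+r^n$ held a.e.\ on $(0,\delta)$, then integrating $(\omega/r^n)'>1/r$ would give $\omega(\delta)/\delta^n\ge\log(\delta/r)\to\infty$. You instead integrate $(\log\Phi)'>C/r$ for $\Phi=\omega+\omega_n r^n$ with a constant $C>n$, and you use the lower bound $\Phi(s)\ge\omega_n s^n$ in place of the logarithmic divergence; this is only a cosmetic variation of the same contradiction argument.
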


While one can prove Lemma \ref{lem:Maly-Martio_lemma} by directly following the proof of \cite[Theorem A]{maly1995lusin}, we elect to instead provide a more streamlined proof. Indeed, Lemma \ref{lem:Maly-Martio_lemma} is a direct consequence of the following basic estimate for absolutely continuous functions.

\begin{lem}\label{lem: new proof}
    Let $\omega:[0,R)\to[0,\infty)$ be a non-negative absolutely continuous function, and let $n\in(0,\infty)$. Then there exists a family $S\subset[0,R)$ of radii such that $S\cap(0,\delta)$ has positive measure for all $\delta\in(0,R)$, and we have
    \begin{align*}
        r\omega'(r)\le n\omega(r)+r^n
    \end{align*}
    for all $r\in S$.
\end{lem}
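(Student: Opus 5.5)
We argue by contradiction, measuring the set of bad radii directly. Suppose the conclusion fails, so that there is some $\delta_0\in(0,R)$ such that the set
\[
S := \{ r \in (0,\delta_0) : r\omega'(r) \le n\omega(r) + r^n \}
\]
has measure zero. Here $\omega'$ exists a.e.\ since $\omega$ is absolutely continuous. If $S\cap(0,\delta)$ has positive measure for every $\delta$, we are done by taking $S$ itself (defined on all of $(0,R)$). Otherwise we may shrink $\delta_0$ so that for a.e.\ $r\in(0,\delta_0)$ we have
\[
r\omega'(r) > n\omega(r) + r^n .
\]
The plan is to integrate this differential inequality and contradict the non-negativity of $\omega$.

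Concretely, consider the function $\psi(r) = r^{-n}\omega(r)$ on $[\epsilon,\delta_0]$ for each $\epsilon>0$. It is absolutely continuous there, being a product of an absolutely continuous function with a smooth one away from $0$. Its derivative satisfies
\[
\psi'(r) = r^{-n-1}\bigl( r\omega'(r) - n\omega(r) \bigr) > r^{-n-1} r^n = \frac{1}{r}
\]
for a.e.\ $r$. Integrating from $\epsilon$ to $\delta_0$ gives
\[
\psi(\delta_0) - \psi(\epsilon) \ge \log(\delta_0/\epsilon).
\]
Since $\psi(\epsilon)\ge 0$ because $\omega\ge0$, this yields $\psi(\delta_0)\ge \log(\delta_0/\epsilon)\to\infty$ as $\epsilon\to 0$. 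This contradicts the finiteness of $\omega(\delta_0)$.

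Hence for every $\delta\in(0,R)$ the set $\{r\in(0,\delta): r\omega'(r)\le n\omega(r)+r^n\}$ has positive measure. We then take $S$ to be the set of all $r\in(0,R)$ where $\omega'(r)$ exists and the inequality holds; this set is measurable.

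The only delicate points are the following. First, the absolute continuity of $\psi$ needs to hold only on intervals bounded away from $0$, which is why we integrate from $\epsilon$ rather than from $0$. Second, the reduction must be done carefully: we need the contrary assumption to give the strict inequality a.e.\ on a full initial interval $(0,\delta_0)$. This follows because negating ``positive measure in every $(0,\delta)$'' yields some $\delta_0$ with $|S\cap(0,\delta_0)|=0$. No real obstacle is expected; the key idea is the integrating factor $r^{-n}$, under which the $r^n$ term produces a non-integrable $1/r$ singularity at $0$.
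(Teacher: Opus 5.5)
Your proof is correct and is essentially the paper's argument. Both assume the strict reverse inequality a.e.\ on some $(0,\delta)$, integrate $\frac{d}{d\rho}\bigl(\rho^{-n}\omega(\rho)\bigr) > \rho^{-1}$ over $[r,\delta]$, and use $\omega \ge 0$ to get $\omega(\delta)/\delta^n \ge \log(\delta/r) \to \infty$. Your remark that $\rho^{-n}\omega$ is only guaranteed absolutely continuous away from $0$ makes explicit a point the paper leaves implicit, though your opening reduction is worded somewhat circularly and could start directly from the negation of the conclusion.
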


\begin{proof}
    Suppose towards contradiction that there exists a $\delta\in(0,R)$ such that 
    \begin{align*}
        r \omega'(r)>n\omega(r)+r^n
    \end{align*}
    for a.e. $r\in(0,\delta)$. Note that by dividing on both sides by $r^{n+1}$, this yields
    \begin{align*}
        r^{-n}\omega'(r)-nr^{-n-1}\omega(r)>r^{-1}
    \end{align*}
    for a.e. $r\in(0,\delta)$. Then for every $r\in(0,\delta)$, we have
    \begin{align*}
        \omega(\delta)/\delta^n&\ge\omega(\delta)/\delta^n-\omega(r)/r^n=\int_r^\delta\frac{d}{d\rho}\frac{\omega(\rho)}{\rho^{n}}d\rho\\
        &=\int_r^\delta(\rho^{-n}\omega'(\rho)-n\rho^{-n-1}\omega(\rho))d\rho\ge\int_r^\delta\rho^{-1}d\rho=\log(\delta/r).
    \end{align*}
    By letting $r\to0$, we get $\omega(\delta)/\delta^n\ge\infty$, which contradicts the fact that $\delta>0$ and $\omega$ is finite-valued. Thus, the claim holds.
\end{proof}

\begin{proof}[Proof of Lemma \ref{lem:Maly-Martio_lemma}]
    Given $x \in \Omega$, the result is an immediate consequence of applying Lemma \ref{lem: new proof} to the absolutely continuous function $\omega \colon [0, \dist(x, \R^n \setminus \Omega)) \to [0, \infty)$ given by
    \[
        \omega(r) = \int_{B(x,r)} \lvert Df\rvert^n, \qquad \text{which satisfies} \qquad
        \omega'(r) = \int_{S(x,r)} \lvert Df\rvert^n \, d\mathcal{H}^{n-1}.
    \]
\end{proof}

Now, we are ready to prove Theorem \ref{thm:lusin_N}.

\begin{proof}[Proof of Theorem \ref{thm:lusin_N}]
    We begin with the case where $f$ satisfies the $(K, \Sigma)$-distortion inequality with defect. Let $E\subset\Omega$ with $|E|=0$. We first restrict ourselves to the case in which there exists an open set $U$ such that $E\subset U$  and $U$ is compactly contained in $\Omega$. 
    Let $V$ be an open set such that $E\subset V\subset U$, and for every $x \in U$, let $R(x)$ be the family of radii provided by Lemma \ref{lem:Maly-Martio_lemma}. Now, by Lemma \ref{osc lemma}, for every $x \in E$, we may select a radius $r_x\in R(x)$ such that $B(x,r_x)$ is compactly contained in $V$ and
    \begin{equation}\label{osc control}
        \begin{aligned}
        \left[\mathop{\rm ess\ osc}_{B(x,r_x)} f\right]^n
        &\le C(n)r_x\int_{S(x,r_x)}|Df|^nd\mathcal{H}^{n-1}\\
        &\;+C(p,q,n)\|K\|_{L^p(B(x,r_x))}\left\|\frac{\Sigma}{K}\right\|_{L^{q}(B(x,r_x))}r_x^{n(1-p^{-1}-q^{-1})}.
        \end{aligned}
    \end{equation}
    
    We now construct a covering of $fE$ as follows:
    \begin{equation*}
        \mathcal{F}:=\left\{B\Big(f(x),\mathop{\rm ess\ osc}_{B(x,r_x)} f\Big): x\in E \right\}.
    \end{equation*}
    By the Vitali covering Lemma, we find a subfamily 
    \[
        \mathcal F' = \left\{ B\Big(f(x_i), \mathop{\rm ess\ osc}_{B(x_i, r_{x_i})} f\Big) : i \in I \right\}  \subset  \mathcal F
    \]
    of pairwise disjoint balls such that $\{ 5B : B \in \mathcal F'\}$ covers $fE$. Now, by \eqref{eq:Maly-Martio_estimate}, \eqref{osc control}, and H\"older's inequality with exponents $p,q$ and $(1-p^{-1}-q^{-1})^{-1}$, we have
    \begin{equation*}
        \begin{aligned}
            |fE|&\le \left|\bigcup_{i \in I} B(f(x_i),5 \mathop{\rm ess\ osc}_{B(x_i,r_{x_i})} f)\right|\le C(n)\sum_{i \in I}\left[\mathop{\rm ess\ osc}_{B(x_i,r_{x_i})} f\right]^n\\
            &\le C(n)\sum_{i \in I} \int_{B(x_i,r_{x_i})}\left(1+|Df(y)|^n\right)dy\\
            &\quad +C(p,q,n)\sum_{i \in I}\|K\|_{L^p(B(x_i,r_{x_i}))}\left\|\frac{\Sigma}{K}\right\|_{L^{q'}(B(x_i,r_{x_i}))}r_{x_i}^{n(1-p^{-1}-q^{-1})}\\
            &\le C(n)\int_V\left(1+|Df(y)|^n\right)dy
            \\
            &\quad+C(p,q,n)\left(\int_VK^p\right)^{p^{-1}}
            \left(\int_V\left(\frac{\Sigma}{K}\right)^{q}\right)^{q^{-1}}
            \left(\sum_{i \in I} r_{x_i}^n\right)^{1-p^{-1} - q^{-1}}.
        \end{aligned}
    \end{equation*}
    Since $E\subset V\subset U\subset\Omega$, $f\in W^{1,n}(U)$, $K\in L^p(U)$, $\Sigma/K\in L^q(U)$ and $\sum_ir^n_{x_i}\le C(n)|V|$, letting $|V|\to 0$, we conclude that $|fE|=0$.

    We then consider the general case where $E\subset\Omega$ with $|E|=0$. We define a sequence of open sets $(U_i)_{i=1}^\infty$ that approximates $\Omega$ from the inside as follows:
    \begin{equation*}
        U_i:=\left\{x\in \Omega: {\rm dist}(x,\Omega^c)<\frac{1}{i}\right\}.
    \end{equation*}
    By the argument above, we have $|f(E\cap U_i)|=0$ for every $i \in \Z_{>0}$, and hence
    \begin{equation*}
        |fE|\le \sum_i|f(E\cap U_i)|=0.
    \end{equation*}

    Finally, we consider the case where $f$ instead has a value of finite distortion with data $(K, \Sigma)$ at some point $y_0 \in \R^n$. Then $f$ satisfies the $(K, \abs{f-y_0}^n \Sigma)$-distortion inequality with defect. Since $f \in W^{1,n}_\loc(\Omega, \R^n)$, we have by the Sobolev embedding theorem that $\abs{f-y_0} \in L^r_\loc(\Omega)$ for every $r \in [1, \infty)$. Thus, if we select a $q' \in (1, q)$ such that $1/p + 1/q' < 1$, then $\abs{f-y_0}^n \Sigma \in L^{q'}_\loc(\Omega)$. The claim then follows from the previously covered case for mappings satisfying a distortion inequality with defect.
\end{proof}

\section{Disconnectedness of preimages}
In this section, we show the first step of Theorem \ref{open and discrete thm}, namely that $f^{-1}\{y_0\}$ is disconnected. The method to prove this is to show that $\mathcal{H}^{-1}\{y_0\}=0$, by using a version of the argument in \cite{onninen2008mappings}. 

We begin by recalling a general Caccioppoli-type estimate; see \cite[Lemma 2.1]{onninen2008mappings}. For another version of this estimate with slightly more general assumptions; see \cite[Lemma 6.2]{kangasniemi2022heterogeneous}. We state the latter here.
\begin{lem}[{\cite[Lemma 6.2]{kangasniemi2022heterogeneous}}]\label{tech lemma} 
    Let $\Omega\subset\mathbb{R}^n$ be a domain, and let $f\in W^{1,n}_{\rm loc}(\Omega,\mathbb{R}^n)\cap L^\infty_{\rm loc}(\Omega,\mathbb{R}^n)$. If $\Psi:[0,\infty)\to\mathbb{R}$ is a piecewise $C^1$-smooth function with $\Psi'$ locally bounded, then for every test function $\eta\in C^\infty_0(\Omega)$, we have 
    \begin{equation*}
        \left|\int_\Omega\eta\left[n\Psi(|f|^2)+2|f|^2\Psi'(|f|^2)\right]J_f\right|\le\sqrt{n}\int_\Omega|\nabla\eta||f|\Psi(|f|^2)|Df|^{n-1}.
    \end{equation*}
\end{lem}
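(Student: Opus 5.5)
We have to prove Lemma \ref{tech lemma}: for $f\in W^{1,n}_{\rm loc}\cap L^\infty_{\rm loc}$ and $\Psi$ piecewise $C^1$ with $\Psi'$ locally bounded,
\[
\left|\int_\Omega\eta\left[n\Psi(|f|^2)+2|f|^2\Psi'(|f|^2)\right]J_f\right|\le\sqrt{n}\int_\Omega|\nabla\eta||f|\Psi(|f|^2)|Df|^{n-1}.
\]
The plan is to recognize the bracketed integrand as a divergence and integrate by parts. For smooth $f$ I use
\[
J_f\,\vol_n = df_1\wedge\dots\wedge df_n,
\]
together with the $(n-1)$-form
\[
\omega = \sum_{i=1}^n (-1)^{i-1} f_i\, df_1\wedge\dots\wedge\widehat{df_i}\wedge\dots\wedge df_n = f^*\bigl(\star d(2^{-1}|y|^2)\bigr).
\]

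\textbf{Step 1: the pointwise identity.} Set $h=\Psi(|f|^2)$. Since $d\omega = n\,J_f\vol_n$, the product rule gives
\[
d(h\,\omega)=dh\wedge\omega + n h J_f\vol_n .
\]
Here $dh = 2\Psi'(|f|^2)\sum_j f_j\,df_j$. Wedging $df_j$ with $\omega$ keeps only the $i=j$ term, so $df_j\wedge\omega = f_j J_f\vol_n$. Hence
\[
d(h\omega) = \bigl[n\Psi(|f|^2)+2|f|^2\Psi'(|f|^2)\bigr]J_f\vol_n .
\]

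\textbf{Step 2: integration by parts.} Testing against $\eta$ gives
\[
\int\eta\, d(h\omega) = -\int d\eta\wedge h\omega .
\]
It remains to bound the pointwise norm of the right-hand integrand. The form $\omega_x$ applied to $n-1$ vectors equals $\det\bigl[f(x), Df\,v_1,\dots, Df\,v_{n-1}\bigr]$. Consequently
\[
|d\eta\wedge\omega| \le |\nabla\eta|\,|\omega| \le |\nabla\eta|\,|f|\,|Df|^{n-1}.
\]
The factor $\sqrt n$ comes from comparing the comass with the Euclidean norm of the $(n-1)$-covector when estimating $d\eta\wedge\omega$ coefficientwise. It is harmless, and I would simply bound $|\omega|$ via Hadamard's inequality on its $n$ coefficients, each at most $|f||Df|^{n-1}$ up to a factor absorbed by Cauchy–Schwarz. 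This yields the stated inequality, with $|\Psi|$ in place of $\Psi$ if $\Psi$ changes sign; the statement should be read that way.

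\textbf{Step 3: approximation, the main obstacle.} For general $f\in W^{1,n}_{\rm loc}\cap L^\infty_{\rm loc}$, I mollify to $f_k\to f$ in $W^{1,n}$ on $\operatorname{spt}\eta$. The mollifiers are uniformly bounded there, and a subsequence converges a.e. The identity in Step 2 is an integral identity, so it survives weak differentiation. Indeed, $h\omega$ is a product of bounded Lipschitz-composed functions with the $L^{n/(n-1)}$ minors $df_1\wedge\dots\wedge\widehat{df_i}\wedge\dots\wedge df_n$, and the usual wedge/divergence-free (Piola) identity holds in the weak sense for $W^{1,n}$ maps.

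Passing to the limit requires care because $\Psi$ is only piecewise $C^1$. I expect this to be the delicate point, since $\Psi'(|f_k|^2)$ need not converge at the break points. I would handle it by one of two routes:
\begin{itemize}
\item Prove the inequality first for $C^1$ $\Psi$, where $J_{f_k}\to J_f$ in $L^1$, and all $\Psi$-factors are bounded and converge a.e. by dominated convergence. Then approximate a piecewise $C^1$ $\Psi$ by $C^1$ functions $\Psi_j$ with $\Psi_j\to\Psi$ uniformly and $\Psi_j'\to\Psi'$ boundedly off the finitely many break points.
\item On the level sets $\{|f|^2=t_0\}$ of the break points, $Df=0$ a.e. (hence $J_f=0$ a.e.) by the standard property of Sobolev functions on level sets. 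These sets therefore contribute nothing, and dominated convergence closes the argument.
\end{itemize}
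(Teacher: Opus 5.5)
Your argument is essentially right. It follows the standard route: write the integrand as $d\bigl(\Psi(|f|^2)\,f^*\star d(2^{-1}|y|^2)\bigr)$, integrate by parts against $\eta$, and bound $|d\eta\wedge\omega|$ pointwise. The paper does not reprove this lemma; it quotes \cite[Lemma 6.2]{kangasniemi2022heterogeneous}, which rests on the same integration-by-parts idea.

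\textbf{A claim that needs fixing.} On a level set $\{|f|^2=t_0\}$ with $t_0>0$ you do \emph{not} get $Df=0$ a.e.; think of $f(x)=x/|x|$ on an annulus. The level-set property applied to $|f|^2$ only gives $\nabla|f|^2=2(Df)^Tf=0$ a.e. there. Since $f\neq 0$ on that set, this still forces $J_f=0$ a.e. For $t_0=0$ you do have $Df=0$ a.e.\ on $\{f=0\}$. So the fact you actually use, $J_f=0$ a.e.\ on the break-point level sets, holds, but for this reason and not the one you gave.

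This fact is also not an alternative to your first bullet; both routes need it. It is what makes the left-hand side well defined when $|f|^2$ hits a break point. It is also what lets $\Psi_j'(|f|^2)J_f\to\Psi'(|f|^2)J_f$ a.e.\ when you approximate $\Psi$ by $C^1$ functions.

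\textbf{Constant and sign.} Your comass/Hadamard bound $|d\eta\wedge\omega|\le|\nabla\eta|\,|f|\,|Df|^{n-1}$ already gives the inequality with constant $1$. Equivalently, $d\eta\wedge\omega=\langle \operatorname{adj}(Df)f,\nabla\eta\rangle\,\mathrm{vol}_n$ and $|\operatorname{adj}(Df)|\le|Df|^{n-1}$, so the $\sqrt n$ is just slack. You are right that the statement must be read with $\Psi\ge 0$ (or $|\Psi|$ on the right). That holds for the $\Psi$ used in Lemma~\ref{Duk bounded lemma}.
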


We now apply Lemma \ref{tech lemma} to obtain a Sobolev norm estimate for the function $\min(\log\log|f-y_0|^{-1},k)$, where $k\in\mathbb{Z}_{>0}$.

\begin{lem}\label{Duk bounded lemma}
    Let $\Omega\subset\mathbb{R}^n$ be a domain, let $y_0\in\mathbb{R}^n$, and let $p>n-1,q>1$ with $p^{-1}+q^{-1}<1$.
    Suppose that
    $f\in W^{1,n}_{\rm loc}(\Omega,\mathbb{R}^n)$ has a value of finite distortion at $y_0\in\mathbb{R}^n$ with data $(K, \Sigma)$, where $K:\Omega\to[1,\infty)$ and $\Sigma:\Omega\to[0,\infty)$ are measurable functions such that
    \begin{equation}
        K\in L^p_{\rm loc}(\Omega) \quad{\rm and}\quad
        \frac{\Sigma}{K}\in L^{q}_{\rm loc}(\Omega).
    \end{equation} 
    Let $u_k=\min(\log\log|f-y_0|^{-1},k)$, then there exists a neighborhood $V$ of $y_0$ such that
    \begin{equation}\label{Duk uniformly bounded}
    \int_D|\nabla u_k|^\beta\le C_D<\infty,
    \end{equation}
    for $\beta=\frac{pn}{p+1}<n$, all $k\in\mathbb{Z}_{>0}$ and all domains $D$ compactly contained in $U = f^{-1}(V)$, where $C_D=C_D(U,D,n,p,K,\Sigma)$ is independent of $k$.
\end{lem}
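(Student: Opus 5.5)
Without loss of generality $y_0 = 0$, and $f$ is continuous by the continuity results of \cite{kangasniemi2025continuity}. I take $V = \B(0, e^{-1})$, so that $U = f^{-1}(V)$ is open and $\log\log\abs{f}^{-1}$ is well defined and positive on $U \setminus f^{-1}\{0\}$. The plan is to first obtain a weighted $L^1$-type bound for $\abs{Df}^n/(K\abs{f}^n\log^n\abs{f}^{-1})$ from Lemma \ref{tech lemma}, and then to convert it into the $L^\beta$-bound via H\"older's inequality with $K \in L^p$. Indeed, on $\{u_k < k\}$ we have $\abs{\nabla u_k} \le \abs{Df}/(\abs{f}\log\abs{f}^{-1})$, while $\nabla u_k = 0$ a.e.\ elsewhere. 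Hence, with $\beta = pn/(p+1)$, H\"older's inequality with exponents $n/\beta$ and $(p+1)$ gives
\[
    \int_D \abs{\nabla u_k}^\beta \le \left(\int_{D\cap\{u_k<k\}} \frac{\abs{Df}^n}{K\abs{f}^n\log^n\abs{f}^{-1}}\right)^{\frac{p}{p+1}} \left(\int_D K^p\right)^{\frac{1}{p+1}},
\]
since $(\beta/n)\cdot$ powers of $K$ give $K^{\beta/n \cdot (n/(n-\beta))} = K^p$. It thus suffices to bound the first integral independently of $k$.

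For that, I will apply Lemma \ref{tech lemma} with a truncated function $\Psi$. Writing $s = \abs{f}^2$, I choose $\Psi(s) = s^{-n/2}\log^{-n}(s^{-1/2})$ for $s \in [e^{-2k'}, e^{-2}]$, where $e^{-k'} := \exp(-e^{k})$ is the level at which $u_k$ truncates. I extend $\Psi$ as a constant for $s < e^{-2k'}$ and cut it off smoothly to $0$ for $s > e^{-2}$ (shrinking $V$ slightly to stay inside the region $\{\abs{f} < e^{-1}\}$). A direct computation gives, on the middle range,
\[
    n\Psi(s) + 2s\Psi'(s) = \frac{n}{s^{n/2}\log^{n+1}(s^{-1/2})},
\]
which is nonnegative. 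On the lower range it equals $n\Psi$, which is also positive. Multiplying the distortion inequality $\abs{Df}^n \le KJ_f + \Sigma\abs{f}^n$ by this weight over $K$ then yields
\[
    \int \eta^n \frac{\abs{Df}^n}{K\abs{f}^n\log^{n+1}\abs{f}^{-1}} \lesssim \int \eta^n \left[n\Psi + 2s\Psi'\right] J_f + \int \eta^n \frac{\Sigma}{K\log^{n+1}\abs{f}^{-1}},
\]
with $\eta^n$ in place of $\eta$. The last term is bounded by $\int \eta^n\Sigma/K$, which is finite because $\Sigma/K \in L^q_\loc$. The Jacobian term is controlled by Lemma \ref{tech lemma} by
\[
    \int \abs{\nabla \eta}\,\eta^{n-1} \abs{f}\Psi(\abs{f}^2)\abs{Df}^{n-1}.
\]
Since $\abs{f}\Psi \approx \abs{f}^{1-n}\log^{-n}$, I split this integrand as $\bigl(\eta\abs{Df}/(K^{1/n}\abs{f}\log^{(n+1)/n})\bigr)^{n-1}$ times $K^{(n-1)/n}\abs{\nabla\eta}\log^{-n+(n+1)(n-1)/n}$. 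Young's inequality then absorbs the first factor into the left-hand side, leaving $\int K^{n-1}\abs{\nabla\eta}^n \log^{\gamma}\abs{f}^{-1}$ for some exponent $\gamma$. This term is finite because $p > n-1$.

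The main obstacle is the mismatch of log powers. The absorbed quantity carries $\log^{n+1}$ rather than the $\log^{n}$ needed for $\nabla u_k$, and the leftover term contains a positive power $\gamma = (n-1)/n \cdot\ldots$ of $\log\abs{f}^{-1}$, which is unbounded near $f^{-1}\{0\}$. I would resolve this by working with $\log^{n+1}$ throughout, so that $\abs{\nabla u_k}^n \le \abs{Df}^n/(\abs{f}^n\log^n) $ is traded against $\log^{n+1}$ at the cost of one factor $\log\abs{f}^{-1} \le e^{u_k}$. Alternatively, I would absorb the leftover $\log^\gamma$ using H\"older's inequality with $K\in L^p$, $p>n-1$, together with the fact that $\log\abs{f}^{-1} \in L^r_\loc$ for every $r$, which holds since $\log\abs{f}\in W^{1,s}_\loc$ by Lemma \ref{lem: log_is_Sobolev} applied with Lemma \ref{lem: spherical-log}. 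The slack $p > n-1$, reflected in $\beta < n$, is exactly what should pay for this extra logarithm. I expect that tuning the exponents in this last step, in the manner of \cite{onninen2008mappings}, is the delicate part.
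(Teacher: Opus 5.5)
The overall architecture matches the paper's: Lemma \ref{tech lemma} applied to $f$ with test function $\eta^n$, the distortion inequality to trade $J_f$ for $\abs{Df}^n/K$, Young absorption, and H\"older with $K\in L^p$. Your final H\"older step and the bound $\abs{\nabla u_k}\le\abs{Df}/(\abs{f}\log\abs{f}^{-1})$ are fine. The gap is that your choice $\Psi(s)=s^{-n/2}\log^{-n}(s^{-1/2})$ carries one logarithm too many, and no tuning in the last step repairs this.

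As you computed, this $\Psi$ gives $n\Psi+2s\Psi'=n\,s^{-n/2}\log^{-(n+1)}(s^{-1/2})$. So the only quantity you can bound is $\int\eta^n\abs{Df}^n/(K\abs{f}^n\log^{n+1}\abs{f}^{-1})$. Passing to $\abs{\nabla u_k}^n/K$ on $\{u_k<k\}$ costs a factor $\log\abs{f}^{-1}$, which there is only bounded by $e^k$. Your first fix therefore gives a constant of order $e^{k\beta/n}$, which is exactly the $k$-dependence the lemma must exclude.

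Your second fix aims at the wrong term. The leftover exponent on the right is $-n+(n+1)(n-1)/n=-1/n$, not positive. After Young the remainder is $\int K^{n-1}\abs{\nabla\eta}^n\log^{-1}\abs{f}^{-1}\le\int K^{n-1}\abs{\nabla\eta}^n$, which is harmless. Its input $\log\abs{f}^{-1}\in L^r_\loc$ is also not available here. Lemmas \ref{lem: log_is_Sobolev} and \ref{lem: spherical-log} presuppose $\abs{Df}/\abs{f-y_0}\in L^s_\loc$, and the paper only obtains this under the zero-degree hypothesis of Lemma \ref{zero Jacobian lemma}.

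The repair is to lower the power of the logarithm in $\Psi$ by one. On $[s_0,e^{-2}]$ with $s_0=\exp(-2e^k)$, take $\Psi(s)=s^{-n/2}\log^{1-n}(s^{-1/2})$. Then $n\Psi+2s\Psi'=(n-1)s^{-n/2}\log^{-n}(s^{-1/2})$ and $\abs{f}\Psi(\abs{f}^2)=\abs{f}^{1-n}\log^{1-n}\abs{f}^{-1}$. The right-hand integrand is then, up to the factor $n\sqrt{n}$, exactly $\bigl(\eta\abs{Df}/(K^{1/n}\abs{f}\log\abs{f}^{-1})\bigr)^{n-1}K^{(n-1)/n}\abs{\nabla\eta}$. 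Young then leaves $C(n)\int K^{n-1}\abs{\nabla\eta}^n$ with no logarithm at all.

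This is, up to a constant factor, the paper's $\Psi(t)=\frac{1}{2t^{n/2}}\int_0^{\min(t,e^{-1})}\frac{\varphi_\eps(s)\,ds}{s\log^n(s^{-1})}$ with $\varphi_\eps$ replaced by $1$, since $\int_0^t\frac{ds}{s\log^n(1/s)}=\frac{1}{(n-1)\log^{n-1}(1/t)}$.

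With this corrected $\Psi$, your constant extension below $s_0$ is a valid alternative to the paper's factor $\varphi_\eps$. Both make the absorbed integral finite; the paper then lets $\eps\to0$ by monotone convergence. On $\{\abs{f}^2<s_0\}$ the weight is $n\Psi(s_0)>0$ and $\Psi(s_0)\abs{f}^n\le\log^{1-n}(s_0^{-1/2})\le1$. Hence the $\Sigma$-term and the Young remainder are bounded by $C(n)\int\eta^n\Sigma/K$ and $C(n)\int K^{n-1}\abs{\nabla\eta}^n$, uniformly in $k$. Meanwhile $\{u_k<k\}$ is exactly the untruncated range, where the weight is $(n-1)\abs{f}^{-n}\log^{-n}\abs{f}^{-1}$, as needed.
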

\begin{proof}
    We select $V = \B^n(y_0, 1/e)$ and let $U=f^{-1} V$, which is open since $f$ is continuous by \cite[Corollary 1.2]{kangasniemi2025continuity}. Let $D$ be a domain that is compactly contained in $U$, let $\varepsilon > 0$, and select $\eta\in C^\infty_0(U,[0,1])$ such that  $\eta\equiv 1$ on $D$. 

    Following the methods in \cite{onninen2008mappings}, we apply Lemma \ref{tech lemma} on $f-y_0$ with 
    \begin{equation*}
        \Psi(t)=\frac{1}{2t^{n/2}}\int_0^{\min(t,e^{-1})}\frac{\varphi_\varepsilon(s)}{s\log^n(s^{-1})}ds,\quad {\rm where} \ \varphi_\varepsilon(s)=\frac{1}{1+\varepsilon2^{s^{-1}}}.
    \end{equation*}
    Then we have
    \begin{multline}\label{uniform bouded 1}
        \int_{U}\frac{\eta^n J_f}{|f-y_0|^n\log^n(|f-y_0|^{-2})}\varphi_\varepsilon(|f-y_0|^{2})\\
        \le C(n)\int_{U}\frac{|\nabla \eta|\eta^{n-1} |Df|^{n-1}}{|f-y_0|^{n-1}}\left(\int_0^{\min(|f-y_0|^2,e^{-1})}\frac{\varphi_\varepsilon(s)}{s\log^n(s^{-1})}ds\right).
    \end{multline}
    Arguing  as in \cite[(3.3)]{kangasniemi2025single}, we obtain
    \begin{multline}\label{uniform bouded 2}
        \int_{U}\frac{\eta^n J_f}{|f-y_0|^n\log^n(|f-y_0|^{-2})}\varphi_\varepsilon(|f-y_0|^{2})\\
        \le C(n)\int_{U}\frac{|\nabla \eta|\eta^{n-1}|Df|^{n-1}}{|f-y_0|^{n-1}\log^{n-1}(|f-y_0|^{-2})}\varphi^{\frac{n-1}{n}}_\varepsilon(|f-y_0|^2).
    \end{multline}
    For any $a,b\ge0$ and $\delta>0$, Young's inequality implies that
    \begin{align*}
        a^{n-1}b=\frac{a^{n-1}}{\delta^\frac{n-1}{n}}(\delta^{\frac{n-1}{n}}b)\le \frac{n-1}{n}\frac{a^n}{\delta}+\frac{1}{n}b^n\delta^{n-1}.
    \end{align*}
    We apply an estimate of this type on the right-hand side of \eqref{uniform bouded 2}, and consequently obtain that
    \begin{multline}\label{uniform bounded 3}
            \int_{U}\frac{\eta^nJ_f}{|f-y_0|^n\log^n(|f-y_0|^{-2})}\varphi_\varepsilon(|f-y_0|^{2})\\
            \le\frac{n-1}{n}\int_U\frac{\eta^n|Df|^n}{K|f-y_0|^n\log^n(|f-y_0|^{-2})}\varphi_\varepsilon(|f-y_0|^{2})\\
            \quad+C(n)\int_{U}K^{n-1}|\nabla \eta|^n.
    \end{multline}
    
    Note that the assumed pointwise estimate in \eqref{K,Sigma-quasiregular} yields
    \begin{multline}\label{uniform bounded 4}
             \int_U\frac{\eta^n|Df|^n}{K|f-y_0|^n\log^n(|f-y_0|^{-2})}\varphi_\varepsilon(|f-y_0|^{2})\\
             \le \int_U\frac{\eta^n J_f}{|f-y_0|^n\log^n(|f-y_0|^{-2})}\varphi_\varepsilon(|f-y_0|^{2})\\
             +\int_U\frac{\eta^n \Sigma}{K\log^n{|f-y_0|^{-2}}}\varphi_\varepsilon(|f-y_0|^{2}).
    \end{multline}
    We combine \eqref{uniform bounded 3} and \eqref{uniform bounded 4}, and absorb the term with $|Df|^n/K$ to the left-hand side of \eqref{uniform bounded 4}; this is possible since the function $\varphi_\varepsilon$
    ensures that the integral is finite. The result of this is that
    \begin{multline}\label{uniform bounded 5}
        \int_U\frac{\eta^n|Df|^n}{K|f-y_0|^n\log^n(|f-y_0|^{-2})}\varphi_\varepsilon(|f-y_0|^{2})\\
        \le C(n)\int_{U}\frac{\eta^n \Sigma}{K\log^n|f-y_0|^{-2}}\varphi_\varepsilon(|f-y_0|^{2})+C(n)\int_{U}K^{n-1}|\nabla\eta|^n.
    \end{multline}
    Since \eqref{uniform bounded 5} no longer has any Jacobians in it, all terms in the estimate are non-negative. Hence, we may let $\varepsilon\to0$ and use monotone convergence to obtain
    \begin{multline}\label{uniform bounded 6}
         \int_U
            \frac{\eta^n|Df|^n}{K|f-y_0|^n\log^n(|f-y_0|^{-2})}\\ 
         \le C(n)\int_{U}\frac{\eta^n\Sigma}{K\log^n(|f-y_0|^{-2})}+C(n)\int_{U}K^{n-1}|\nabla\eta|^n.
    \end{multline}
    
    We then note the pointwise estimate
    \begin{equation*}
        |\nabla u_k|\le\frac{|\nabla|f-y_0||}{|f-y_0|\log(|f-y_0|^{-1})}\le \frac{2 |Df|}{|f-y_0|\log(|f-y_0|^{-2})}
    \end{equation*}
    that holds a.e. in $U$. Thus, H\"older's inequality yields that
    \begin{multline}\label{uniform bounded 7}
        \int_D|\nabla u_k|^\beta 
        \le \int_{D}\frac{2^\beta |Df|^\beta}{|f-y_0|^\beta\log^\beta(|f-y_0|^{-2})}\\
        \le 2^\beta \left(\int_D\frac{|Df|^n}{K|f-y_0|^n\log^n(|f-y_0|^{-2})}\right)^{\frac{\beta}{n}}\left(\int_{D}K^p\right)^{\frac{n-\beta}{n}},
    \end{multline}
    where $\beta=\frac{pn}{p+1}<n$. The last term on the right hand side of \eqref{uniform bounded 7} already only depends on $D$, $K$, $n$, and $p$. For the other term, we note that $\log (\abs{f-y_0}^{-2}) \ge 2$ on $U$, and estimate using $\eqref{uniform bounded 6}$ that
    \begin{multline*}
        \int_D\frac{|Df|^n}{K|f-y_0|^n\log^n(|f-y_0|^{-2})}
        \le \int_U
            \frac{\eta^n|Df|^n}{K|f-y_0|^n\log^n(|f-y_0|^{-2})}\\
        \le C(n) \left( \int_{U}\frac{\eta^n\Sigma}{K\log^n(|f-y_0|^{-2})}+ \int_{U}K^{n-1}|\nabla\eta|^n \right) \\
        \le C(n) \left( \int_{\spt \eta}\frac{\eta^n\Sigma}{2^n K}+ \int_{\spt \eta}K^{n-1}|\nabla\eta|^n \right).
    \end{multline*}
    Since the choice of $\eta$ depended only on $D$ and $U$, the claimed upper bound follows.
\end{proof}

If $u_k=\min(\log\log|f-y_0|^{-1},k)$ as in   Lemma \ref{Duk bounded lemma}, the results in Lemma \ref{Duk bounded lemma} bound the $L^\beta$-norm of $\nabla u_k$, for $\beta=\frac{pn}{p+1}<n$, with an upper bound that does not depend on $k$. Moreover, by \cite[Lemma 3.3]{kangasniemi2025single}, we obtain a $k$-independent bound for $L^\beta$-norm of $u_k$.

\begin{lem}[{\cite[Lemma 3.3]{kangasniemi2025single}}]
    Let $B\subset\mathbb{R}^n$ be a ball, let $u: B\to[0,\infty]$ be measurable, and let $p\in [1,\infty)$. Suppose that for every $k\in\mathbb{Z}_{>0}$, the functions $u_k=\min(u,k)$ are in $W^{1,p}(B)$ and satisfy
    \begin{equation*}
        \int_B |\nabla u_k|^p\le C<\infty,
    \end{equation*}
    where $C$ is independent of $k$. Then either $u\equiv\infty$ a.e. on $B$, or $u\in W^{1,p}(B)$ and 
    \begin{equation}\label{uk uniformly bounded}
        \int_B| u_k|^p\le C'<\infty,
    \end{equation}
    for every $k\in\mathbb{Z}_{>0}$, with $C'$ independent of $k$.
\end{lem}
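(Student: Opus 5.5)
The argument is the same Poincaré-inequality trick as in the proof of Lemma \ref{lem: log_is_Sobolev}, followed by a monotone limiting argument. Throughout, $C$ denotes the uniform bound $\int_B \abs{\nabla u_k}^p \le C$ from the statement, and we write $(v)_B$ for the mean value of $v$ over $B$.

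\emph{Step 1: a set where $u$ stays bounded.} Suppose $u$ is not a.e.\ equal to $\infty$. Then $\{u<\infty\}$ has positive measure, so there exist $t_0\in(0,\infty)$ and a measurable set $F\subset B$ with $m_n(F)>0$ and $u< t_0$ on $F$. In particular $0\le u_k\le t_0$ on $F$ for every $k$.

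\emph{Step 2: uniform control of the means and of $\norm{u_k}_{L^p(B)}$.} The Sobolev--Poincaré inequality on the ball $B$ gives
\[
\norm{u_k-(u_k)_B}_{L^p(B)}\le C(n,p,B)\norm{\nabla u_k}_{L^p(B)}\le C(n,p,B)\,C^{1/p}.
\]
Since $u_k\ge 0$, we have $(u_k)_B\ge 0$. On $F$ we have $(u_k)_B \le t_0 + \abs{u_k-(u_k)_B}$. Integrating this over $F$ and applying H\"older's inequality yields
\[
m_n(F)\bigl((u_k)_B - t_0\bigr)\le \int_F\abs{u_k-(u_k)_B}\le m_n(F)^{1-1/p}\,C(n,p,B)\,C^{1/p}.
\]
Hence $(u_k)_B$ is bounded independently of $k$. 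Therefore
\[
\norm{u_k}_{L^p(B)}\le \norm{u_k-(u_k)_B}_{L^p(B)}+(u_k)_B\,m_n(B)^{1/p}
\]
is bounded independently of $k$, which is \eqref{uk uniformly bounded}.

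\emph{Step 3: passing to the limit.} Since $u_k\nearrow u$ pointwise, monotone convergence gives $u\in L^p(B)$. In particular $u<\infty$ a.e., and $u_k\to u$ in $L^p(B)$ by dominated convergence, with dominating function $u$.

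For the gradients, note that $u_k=\min(u_{k+1},k)$. The Lipschitz--Sobolev chain rule then gives $\nabla u_k=\mathcal{X}_{\{u_{k+1}<k\}}\nabla u_{k+1}$ a.e. Consequently the sequence $\nabla u_k(x)$ is eventually constant for a.e.\ $x$, since $u(x)<\infty$ a.e. Let $G$ denote this a.e.\ limit. The absolute values $\abs{\nabla u_k}$ increase to $\abs{G}$, so monotone convergence gives $\int_B\abs{G}^p\le C$. Dominated convergence, with $\abs{\nabla u_k}\le\abs{G}$, then gives $\nabla u_k\to G$ in $L^p(B)$.

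Thus $(u_k)$ converges in $W^{1,p}(B)$. By completeness of $W^{1,p}(B)$, together with the $L^p$-convergence $u_k\to u$ from above, we conclude $u\in W^{1,p}(B)$ with $\nabla u=G$.

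\emph{Main obstacle.} The only delicate point is Step 2: turning the gradient bound into a bound on the means $(u_k)_B$. This is exactly where the positive-measure set $F$ on which $u$ is bounded is needed, and it is why the alternative $u\equiv\infty$ appears in the statement. The remaining steps are routine monotone and dominated convergence arguments.
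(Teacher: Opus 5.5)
Your proof is correct and follows essentially the same approach as the cited source. It is also the argument this paper itself runs in the proof of Lemma \ref{lem: log_is_Sobolev}: the Sobolev--Poincar\'e inequality, tested on a positive-measure set where $u$ is bounded, gives a $k$-independent bound on the means $(u_k)_B$ and hence on $\norm{u_k}_{L^p(B)}$, and monotone and dominated convergence (via $\nabla u_k=\mathcal{X}_{\{u<k\}}\nabla u_{k+1}$) then yield $u\in W^{1,p}(B)$.
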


Now, we may complete the proof of the main result of this section. The proof is similar to that in {\cite[Lemma 3.4]{kangasniemi2025single}}.

\begin{lem}\label{totally disconnected}
    Let $\Omega\subset\mathbb{R}^n$ be a domain, let $y_0\in\mathbb{R}^n$, and let $p>n-1,q>1$ with $p^{-1}+q^{-1}<1$.
    Suppose that $f\in W^{1,n}_{\rm loc}(\Omega,\mathbb{R}^n)$ has a value of finite distortion at $y_0\in\mathbb{R}^n$ with data $(K, \Sigma)$, where $K:\Omega\to[1,\infty)$ and $\Sigma:\Omega\to[0,\infty)$ are measurable functions such that
    \begin{equation}
        K\in L^p_{\rm loc}(\Omega) \quad{\rm and}\quad
        \frac{\Sigma}{K}\in L^{q}_{\rm loc}(\Omega).
    \end{equation}
    Then either $f\equiv y_0$ or $\mathcal{H}^{1}(f^{-1}\{y_0\})=0$. In particular, in the latter case $f^{-1}\{y_0\}$ is totally disconnected.
\end{lem}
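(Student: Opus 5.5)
The plan is to follow the capacity/Hausdorff-dimension argument of \cite{onninen2008mappings} and \cite[Lemma 3.4]{kangasniemi2025single}, now that Lemma \ref{Duk bounded lemma} and \cite[Lemma 3.3]{kangasniemi2025single} are available.

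First, $f$ is continuous by \cite[Corollary 1.2]{kangasniemi2025continuity}. Set $V=\B^n(y_0,1/e)$ and $U=f^{-1}V$, which is an open neighbourhood of $f^{-1}\{y_0\}$. Since $\mathcal H^1$-nullity is local, it suffices to show that $\mathcal H^1(f^{-1}\{y_0\}\cap B)=0$ for every ball $B$ compactly contained in $U$.

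Let $u=\log\log|f-y_0|^{-1}\colon U\to[0,\infty]$ and $u_k=\min(u,k)$. By Lemma \ref{Duk bounded lemma}, $\int_B|\nabla u_k|^\beta\le C_B$ uniformly in $k$, where $\beta=pn/(p+1)$. Each $u_k$ lies in $W^{1,\beta}(B)$: it is a Lipschitz truncation of a function of the continuous $W^{1,n}$-map $f$, and it is locally Lipschitz in $|f-y_0|$ on the range where $u_k<k$. The lemma \cite[Lemma 3.3]{kangasniemi2025single} then gives a dichotomy. Either $u\equiv\infty$ a.e.\ on $B$, or $u\in W^{1,\beta}(B)$.

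In the first case $f\equiv y_0$ a.e.\ on $B$. We then have to propagate this to all of $\Omega$. The set $\operatorname{int} f^{-1}\{y_0\}$ is open, and we would show it is also relatively closed in $\Omega$. Take a point $x$ on its boundary and a small ball $B'\ni x$ inside $U$. On $B'$ the function $u$ equals $\infty$ on a set of positive measure, so the dichotomy (which forbids $u\in W^{1,\beta}(B')$ in that situation) forces $u\equiv\infty$ a.e.\ on $B'$. Connectedness of $\Omega$ then gives $f\equiv y_0$. Alternatively, one can apply Lemma \ref{Duk bounded lemma} on chains of overlapping balls.

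In the second case, $u\in W^{1,\beta}(B)$ is finite a.e., and $u=\infty$ exactly on $f^{-1}\{y_0\}\cap B$. Because $p>n-1$, we have $\beta=pn/(p+1)>n-1$. The key step is the standard fact that the set where a $W^{1,\beta}$-function has a genuinely infinite value has $\mathcal H^{n-\beta+\epsilon}$-measure zero, indeed zero $\beta$-capacity. Here $u$ is a continuous, possibly infinite-valued function, and the points where it equals $\infty$ coincide with the points where its precise representative is $\infty$, since $u_k\to u$ monotonically and locally uniformly away from the zero set. So we need to check that each such point is a non-Lebesgue point where the averages blow up. Using the Lebesgue point theorem for Sobolev functions, the exceptional set of $W^{1,\beta}$-functions has $\mathcal H^{s}$-measure zero for every $s>n-\beta$. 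Since $n-\beta<1$, taking $s=1$ gives $\mathcal H^1(f^{-1}\{y_0\}\cap B)=0$.

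I expect the main obstacle to be justifying that every point of $f^{-1}\{y_0\}\cap B$ lies in the exceptional set. I would handle it with continuity: near such a point $x$ we have $u\ge k$ on a neighbourhood, so the ball averages $\dashint_{B(x,r)}u\to\infty$ and $x$ is not a Lebesgue point. The standard capacity estimate then applies.

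Total disconnectedness follows because a set with $\mathcal H^1=0$ contains no nondegenerate continuum: a continuum has $\mathcal H^1$ at least its diameter, since its projection onto a line is an interval of that length.
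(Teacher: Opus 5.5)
Your proposal is correct and follows the paper's route. Like the paper, you combine the $k$-uniform bound of Lemma \ref{Duk bounded lemma} with the dichotomy of \cite[Lemma 3.3]{kangasniemi2025single}, handle the degenerate case by connectedness, and conclude with a $\beta$-capacity/Hausdorff-dimension bound using $\beta = pn/(p+1) > n-1$; your open--closed argument on $\operatorname{int} f^{-1}\{y_0\}$ is actually more explicit than the paper's component argument. The only difference is the last step: the paper shows $\mathrm{Cap}_\beta(f^{-1}\{y_0\}\cap\overline{B},3B)=0$ directly with the test functions $v_k=k^{-1}\eta\min(u,k)$ and then cites \cite[Theorem 2.26]{heinonen2018nonlinear}, whereas you use $u\in W^{1,\beta}(B)$ with the Federer--Ziemer Lebesgue-point theorem, which is the same capacity fact packaged as a black box.
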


\begin{proof}
    By Lemma \ref{Duk bounded lemma} and \cite[Lemma 3.3]{kangasniemi2025single}, we find a neighborhood $V$ of $y_0$ such that for every ball $B$ compactly contained in $f^{-1}V$, we have either $f\equiv y_0$ on $B$ or $\log\log|f-y_0|^{-1}\in W^{1,\beta}(B)$, where $\beta=\frac{pn}{p+1}$. If $\log\log|f-y_0|^{-1}\notin W^{1,\beta}_{\rm loc}(f^ {-1}V)$, then there exists a component $U$ of $f^{-1}V$ such that $f\equiv y_0$ on $U$. By \cite[Corollary 1.2]{kangasniemi2025continuity}, $f$ is continuous and consequently we have that $f(\partial U\cap \Omega)\subset\{y_0\}\subset V$. Since the components of $f^{-1}V$ are open, this is only possible if $U=\Omega$. Hence, we either have $\log\log|f-y_0|^{-1}\in W^{1,n}_{\rm loc}(f^{-1}V)$ or $f\equiv y_0$ on all of $\Omega$.

    Suppose that $\log\log|f-y_0|^{-1}\in W^{1,n}_{\rm loc}(f^{-1}V)$. We wish to prove that $\mathcal{H}^{1}(f^{-1}\{y_0\})=0$. Let $x\in f^{-1}\{y_0\}$, and let $B=B^n(x,r)$ be such that $3B=B^n(x,3r)$ is compactly contained in $f^{-1}V$. Select $\eta\in C^\infty_0(3B)$ such that $\eta\ge0$ and $\eta=1$ on $2B$, and we define $v_k=k^{-1}\eta\min(\log\log|f-y_0|^{-1},k)$. Then, for $\beta=\frac{pn}{p+1}\in(n-1,n)$, every $v_k$ is a compactly supported $W^{1,\beta}$-function on $B$ such that $v_k\equiv 1$ in a neighborhood of $f^{-1}\{y_0\}\cap\overline{B}$. Consequently, the functions $v_k$ are admissible for the $\beta$-capacity of the condenser ($f^{-1}\{y_0\}\cap\overline{B},3B$); see \cite[Section 2]{heinonen2018nonlinear} for an exposition on capacity. Since the $L^\beta$-norms of $\nabla v_k$ tend to zero by \eqref{Duk uniformly bounded} and \eqref{uk uniformly bounded}, we have ${\rm Cap}_\beta(f^{-1}\{y_0\}\cap\overline{B},3B)$=0. By \cite[Theorem 2.26]{heinonen2018nonlinear}, the Hausdorff dimension of $f^{-1}\{y_0\}\cap\overline{B}$ is at most $n-\beta<1$, and consequently $\mathcal{H}^{1}(f^{-1}\{y_0\}\cap\overline{B})=0$. The claim $\mathcal{H}^{1}(f^{-1}\{y_0\})=0$ follows by considering a countable cover of $f^{-1}\{y_0\}$ by such $B$, and the proof is complete. 
\end{proof}

\section{Positivity of the degree}
We then connect the definition of topological degree to the setting of Theorem \ref{open and discrete thm}.

\begin{defn}
    Let $\Omega\subset\mathbb{R}^n$ be a domain, let $y_0\in\mathbb{R}^n$, and let $p>n-1,q>1$ with $p^{-1}+q^{-1}<1$. Let
    $f\in W^{1,n}_{\rm loc}(\Omega,\mathbb{R}^n)$ be a non-constant map that satisfies \eqref{K,Sigma-quasiregular} for a.e. $x\in\Omega$, where $K:\Omega\to[1,\infty)$ and $\Sigma:\Omega\to[0,\infty)$ are measurable functions that satisfy conditions \eqref{K,Sigma condition}. Note that $f$ satisfies the Lusin (N) -property by Theorem \ref{thm:lusin_N}.

    We suppose that $U$ is a connected component of $f^{-1}\mathbb{B}^n(y_0,\varepsilon)$ for some $\varepsilon>0$ such that $\overline{U}$ is a compact subset of $\Omega$. Note that if $x\in f^{-1}\{y_0\}$ and $U$ is the $x$-component of $f^{-1}\mathbb{B}^n(y_0,\varepsilon)$ for a small enough $\varepsilon$, then this property holds by \cite[Lemma 4.1]{kangasniemi2025single}. In particular, $U$ is an open set such that $f$ is well defined on $U$ and $f\partial U\subset\partial\mathbb{B}^n(y_0,\varepsilon)$. Thus, the topological degree $\deg(f,y,U)$ is well defined for all points $y\in\mathbb{R}^n\setminus
    \partial \mathbb{B}^n(y_0,\varepsilon)\subset \mathbb{R}^n\setminus f\partial U$. Since $\mathbb{B}^n(y_0,\varepsilon)$ is connected and $\deg(f,y,U)$ is locally constant, $\deg(f,y,U)$ is constant-valued on $\mathbb{B}^n(y_0,\varepsilon)$. For convenience, we call this constant value of $\deg(f,y,U)$ the {\em degree of $f$ with respect to $U$}, and denote it $\deg(f,U)$.
\end{defn}

Now, under the assumptions of Theorem \ref{open and discrete thm}, if $f(x)=y_0$, then for small enough
$\varepsilon>0$ we have a well-defined degree $\deg(f,U)$ for the $x$ -component $U$ of $f^{-1}\mathbb{B}^n(y_0,\varepsilon)$
by \cite[Lemma 4.1]{kangasniemi2025single}. Our objective in this section is to show that, if $\varepsilon$ is small enough,
then this degree is positive.

\subsection{Non-negative values}
We start with a weaker result, which is our counterpart to
\cite[Lemma 5.2]{kangasniemi2025single}. By adapting the arguments therein, we show the non-negativity of the degree.

\begin{lem}\label{non negative lemma}
    Let $\Omega\subset\mathbb{R}^n$ be a domain, let $y_0\in\mathbb{R}^n$, and let $p>n-1,q>1$ with $p^{-1}+q^{-1}<1$.
    Suppose that $f\in W^{1,n}_{\rm loc}(\Omega,\mathbb{R}^n)$ has a value of finite distortion at $y_0\in\mathbb{R}^n$ with data $(K, \Sigma)$, where $K:\Omega\to[1,\infty)$ and $\Sigma:\Omega\to[0,\infty)$ are measurable functions such that
    \begin{equation}
        K\in L^p(\Omega) \quad{\rm and}\quad
        \frac{\Sigma}{K}\in L^{q}(\Omega).
    \end{equation}
    Suppose that $U$ is a non-empty component of $f^{-1}\mathbb{B}^n(y_0,\varepsilon)$, where $\varepsilon>0$ is small enough that $\overline{U}\subset\Omega$. Then there exists $c=c(n,q,K,\Sigma,\Omega)>0$ such that $\deg(f,U)\ge 0$ if $m_n(U)<c$.
\end{lem}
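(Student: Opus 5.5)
We argue by contradiction: suppose $\deg(f,U)=d<0$. The plan is to combine the degree formula \eqref{degree formula} with the distortion inequality, written in terms of the spherical logarithm, to bound the mass of $J_f$ near $y_0$. We then show that this forces $m_n(U)$ to be bounded below by a constant depending only on the data.

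\textbf{Step 1: the degree identity.} Since $f$ is continuous and satisfies the Lusin (N) -property (Theorem \ref{thm:lusin_N}), and $f\partial U\subset \partial\B^n(y_0,\varepsilon)$, we apply \eqref{degree formula} with $v=|y-y_0|^{-n}\mathcal{X}_{\{t<|y-y_0|<\varepsilon\}}$ for $0<t<\varepsilon$. This gives
\[
\int_{U\cap\{t<|f-y_0|<\varepsilon\}}\frac{J_f}{|f-y_0|^n}=d\,\omega_{n-1}\log(\varepsilon/t)\le -\omega_{n-1}\log(\varepsilon/t).
\]

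\textbf{Step 2: bounding the negative Jacobian.} The distortion inequality \eqref{K,Sigma-quasiregular} gives $J_f^-\le \Sigma|f-y_0|^n/K$, so $J_f^-/|f-y_0|^n\le \Sigma/K$. Hence the left side of the identity in Step 1 is at least $-\int_U \Sigma/K$. We bound this term by Hölder:
\[
\int_U \Sigma/K\le \|\Sigma/K\|_{L^q(\Omega)}\,m_n(U)^{1-1/q}.
\]
This bound is finite and does not depend on $t$. Letting $t\to 0$, the right side of Step 1 tends to $-\infty$, which contradicts the finite lower bound on the left. Therefore $d\ge0$.

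In this form the argument seems to give $d\ge 0$ without any smallness of $m_n(U)$. The stated constant $c$ suggests the authors instead use a finite truncation level, for instance the spherical weight only on the shell $\{\varepsilon/e<|f-y_0|<\varepsilon\}$. With that choice the contradiction needs $\omega_{n-1}>\|\Sigma/K\|_{L^q}m_n(U)^{1-1/q}$, which yields $c=(\omega_{n-1}/\|\Sigma/K\|_{L^q(\Omega)})^{q/(q-1)}$, consistent with the dependence $c(n,q,K,\Sigma,\Omega)$.

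\textbf{Expected obstacle: justifying the identity at $y_0$.} The weight $|y-y_0|^{-n}$ is not bounded, so the identity must be obtained by monotone limits in $t$. This requires controlling $J_f^+/|f-y_0|^n$, which is where the spherical logarithm of Lemma \ref{lem: spherical-log} would enter. I would therefore keep $t>0$ fixed throughout, so that every weight is bounded and \eqref{degree formula} applies directly.

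Two further points need checking. First, measurability of $(v\circ f)J_f$, which follows from the cited result of Fonseca–Gangbo. Second, that $m_n(f^{-1}\{y_0\})=0$, which holds by Lemma \ref{totally disconnected} unless $f\equiv y_0$, in which case the statement is trivial. If these go through, I would take the simpler $t\to0$ route; otherwise I would fall back to the single-shell version with the constant $c$ above.
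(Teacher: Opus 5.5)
Your main argument (fix $t>0$, then let $t\to0$) is correct. It takes a genuinely different, and in fact stronger, route than the paper.

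The paper tests \eqref{degree formula} with the unweighted $v=\mathcal{X}_{\B^n(y_0,\varepsilon)}$:
\begin{itemize}
\item from $\deg(f,U)\le-1$ it gets $\int_U J_f^-\ge\omega_n\varepsilon^n$, where $\omega_n$ is the volume of the unit ball;
\item it then uses the crude bound $J_f^-\le|f-y_0|^n\Sigma/K\le\varepsilon^n\Sigma/K$ on $U$ and H\"older to get $\omega_n\le\|\Sigma/K\|_{L^q(\Omega)}\,m_n(U)^{1-1/q}$;
\item this gives $c=(\omega_n\|\Sigma/K\|_{L^q(\Omega)}^{-1})^{q/(q-1)}$.
\end{itemize}
So your guess of a finite truncation is right in spirit, although the paper uses the whole ball and no weight.

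Your scale-invariant weight $|y-y_0|^{-n}$ instead cancels the factor $|f-y_0|^n$ in the defect exactly. The lower bound $-\int_U\Sigma/K$ is then independent of $t$, while the degree side $d\,n\omega_n\log(\varepsilon/t)$ diverges. (Your $\omega_{n-1}$ should read $\mathcal{H}^{n-1}(\S^{n-1})=n\omega_n$ in the paper's notation.) This proves $\deg(f,U)\ge0$ for every such component $U$, with no smallness of $m_n(U)$. The estimate itself uses only $\Sigma/K\in L^1(U)$; continuity and Lusin (N) are still needed for the degree to be defined. So the lemma holds with any $c>0$. This is the same weighting the paper itself uses later, in the proof of Lemma~\ref{zero Jacobian lemma}.

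The obstacles you list are not real:
\begin{itemize}
\item For fixed $t$, your $v$ is a bounded Borel function, so $v\circ f$ is Borel and \eqref{degree formula} applies directly.
\item The limit $t\to0$ is taken only in the resulting numerical inequality, not in an integral.
\item Since $v\circ f=0$ on $\{|f-y_0|\le t\}$, you never need $m_n(f^{-1}\{y_0\})=0$.
\end{itemize}

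The paper's route buys a one-line estimate, which suffices because it only ever works with small $U$. Yours buys the stronger statement. That would make the hypothesis $m_n(U)<c$ in Setting~\ref{set:long_list_of_assumptions}, and in the proof of Lemma~\ref{lem: discrete}, superfluous wherever it serves only to invoke this lemma.
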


\begin{proof}
    Suppose that $\deg(f,U)<0$. Then $\deg(f,U)\le -1$, and \eqref{degree formula} yields that 
    \begin{equation*}
        \int_{U}J_f\le -\omega_n\varepsilon^n.
    \end{equation*}
    In particular, if we use $J_f^+$ and $J_f^-$ to  denote the positive and negative part of the Jacobian, respectively, then we have
    \begin{equation*}
        \int_{U}J_f^-\ge\omega_n\varepsilon^n.
    \end{equation*}
    However, the distortion inequality \eqref{K,Sigma-distortion} can be rewritten as $|Df|^n+KJ_f^-\le KJ_f^+|f-y_0|^n\Sigma$. Since $J_f^+$ vanishes where $J_f^->0$, we hence have $J_f^-\le K^{-1}|f-y_0|^n\Sigma$. Now we may estimate
    \begin{equation*}
        \omega_n\varepsilon^n\le
        \int_{U}J_f^-\le \int_{U}\frac{|f-y_0|^n\Sigma}{K}\le \varepsilon^n\left\|\frac{\Sigma}{K}\right\|_{L^q(\Omega)}[m_n(U)]^{\frac{q-1}{q}}.
    \end{equation*}
    Hence, we may only have a negative $\deg(f,U)$ if 
    \begin{equation*}
        m_n(U)\ge\left(\omega_n\left\|\frac{\Sigma}{K}\right\|^{-1}_{L^q(\Omega)}\right)^{\frac{q}{q-1}},
    \end{equation*}
    where this lower bound is interpreted as $\infty$ if $\Sigma$ is identically zero.
\end{proof}

\subsection{Zero values} 
We now know that negative values of $\deg(f,U)$ are impossible for small $U$. Next, we wish to exclude the possibility that $\deg(f,U)$ vanishes for small $U$. The ideas and technical tools come from \cite{kangasniemi2025single} coupled with Proposition \ref{prop:Linfty_generalization}.

We will repeat the following long list of assumptions in multiple lemmas, so we record it once here.

\begin{setting}\label{set:long_list_of_assumptions}
    Let $\Omega\subset\mathbb{R}^n$ be a domain, let $y_0\in\mathbb{R}^n$, and let $p>n-1,q>1$ with $p^{-1}+q^{-1}<1$.
    Suppose that
    $f\in W^{1,n}_{\rm loc}(\Omega,\mathbb{R}^n)$ has a value of finite distortion at $y_0\in\mathbb{R}^n$ with data $(K,\Sigma)$, where $K:\Omega\to[1,\infty)$ and $\Sigma:\Omega\to[0,\infty)$ are measurable functions such that
    \begin{equation}
        K\in L^p(\Omega) \quad{\rm and}\quad
        \frac{\Sigma}{K}\in L^{q}(\Omega).
    \end{equation}
    Suppose that $U$ is a non-empty component of $f^{-1}\mathbb{B}^n(y_0,\varepsilon)$, where $\varepsilon>0$ is small enough that $\overline{U}\subset\Omega$ and $m_n(U)<c$ with $c=c(n,q,K,\Sigma,\Omega)>0$ given by Lemma \ref{non negative lemma}.
\end{setting}

The following lemma is a counterpart to \cite[Lemma 5.3-5.4]{kangasniemi2025single}. The proof is similar, though we regardless provide a detailed proof for the convenience of the reader. 

\begin{lem}\label{zero Jacobian lemma}
    Suppose we are under Setting \ref{set:long_list_of_assumptions}.
    If ${\rm deg}(f,U)=0$, then
    \begin{equation}\label{zero Jacobian}
        \int_{U}\frac{J_f}{|f-y_0|^n}=0\quad{\rm and}\quad \int_U\frac{|Df|^n}{K|f-y_0|^n}=\int_U\frac{\Sigma}{K}<\infty.
    \end{equation}
\end{lem}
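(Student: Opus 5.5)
The plan is to use the degree formula \eqref{degree formula} with radial test functions. This gives the first identity by a truncation and limit argument. The second relation then comes from integrating the distortion inequality divided by $|f-y_0|^n$.

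\textbf{Step 1 (setup).} We are not in the case $f\equiv y_0$, since $\deg(f,U)$ is defined only for non-constant $f$. Hence Lemma \ref{totally disconnected} gives $\mathcal H^1(f^{-1}\{y_0\})=0$, and in particular $m_n(f^{-1}\{y_0\})=0$. So all quotients by $|f-y_0|^n$ are defined a.e. Since $f(U)\subset \B^n(y_0,\eps)$ and $\deg(f,y,U)=0$ for all $y\in\B^n(y_0,\eps)$, the degree formula gives
\[
\int_U (v\circ f)\,J_f=\int_{\R^n\setminus f\partial U} v(y)\deg(f,y,U)\,dm_n(y)=0
\]
for every $v\in L^\infty(\R^n)$ supported in $\B^n(y_0,\eps)$. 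Here we use that $\deg(f,\cdot,U)$ vanishes on the unbounded component of $\R^n\setminus \partial\B^n(y_0,\eps)$.

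\textbf{Step 2 (truncation).} For $t\in(0,\eps)$, take
\[
v_t(y)=\min\{|y-y_0|^{-n},t^{-n}\}\,\mathcal X_{\B^n(y_0,\eps)}(y).
\]
This yields $\int_U \min\{|f-y_0|^{-n},t^{-n}\}J_f=0$, that is,
\[
\int_U \min\{|f-y_0|^{-n},t^{-n}\}J_f^{+}=\int_U \min\{|f-y_0|^{-n},t^{-n}\}J_f^{-}.
\]

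\textbf{Step 3 (limit $t\to0$).} As in the proof of Lemma \ref{non negative lemma}, the distortion inequality gives $J_f^-\le \Sigma|f-y_0|^n/K$ a.e. Hence $J_f^-/|f-y_0|^n\le \Sigma/K\in L^1(U)$, using $\Sigma/K\in L^q(\Omega)$ and $m_n(U)<\infty$. The right-hand side of Step 2 therefore converges to the finite number $\int_U J_f^-/|f-y_0|^n$, by monotone convergence. The left-hand side converges monotonically to $\int_U J_f^+/|f-y_0|^n$. So this integral is finite, and the two agree, which is exactly $\int_U J_f/|f-y_0|^n=0$. This convergence step is the only delicate point, and the a priori bound on $J_f^-$ is what makes it work.

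\textbf{Step 4 (second relation).} Divide \eqref{K,Sigma-quasiregular} by $K|f-y_0|^n$ a.e. to get
\[
\frac{|Df|^n}{K|f-y_0|^n}\le \frac{J_f}{|f-y_0|^n}+\frac{\Sigma}{K}.
\]
All three terms are integrable on $U$, so integrating and applying Step 3 gives
\[
\int_U\frac{|Df|^n}{K|f-y_0|^n}\le\int_U\frac{\Sigma}{K}<\infty.
\]

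This upper bound is all that the hypotheses yield. I cannot establish the asserted \emph{equality} for the given $\Sigma$, and I believe it is false as stated. For example, replacing $\Sigma$ by $\Sigma+\Sigma_0$ with $0\ne\Sigma_0\ge0$, $\Sigma_0/K\in L^q$ and $\int_U\Sigma_0/K$ small preserves every hypothesis of Setting \ref{set:long_list_of_assumptions}. It leaves $\deg(f,U)$ and the left-hand side unchanged but increases the right-hand side.

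The most that can be recovered is equality for the minimal defect
\[
\Sigma_f:=\frac{(|Df|^n-KJ_f)^+}{|f-y_0|^n}\le\Sigma .
\]
This holds only provided $|Df|^n\ge KJ_f$ a.e. on $U$. Under that proviso, $|Df|^n/(K|f-y_0|^n)=J_f/|f-y_0|^n+\Sigma_f/K$ pointwise, and integrating with Step 3 gives $\int_U|Df|^n/(K|f-y_0|^n)=\int_U\Sigma_f/K$. So I expect the statement's second equality is meant as ``$\le$'', which is what Steps 1 to 4 prove.
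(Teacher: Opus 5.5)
Your argument is correct, and your diagnosis of the second relation is right. The paper's proof ends by saying that \eqref{K,Sigma-distortion} ``immediately gives'' the equality, but dividing by $K|f-y_0|^n$ and integrating yields only $\int_U|Df|^n/(K|f-y_0|^n)\le\int_U\Sigma/K<\infty$. That inequality and the first identity are all that is used later, namely in the H\"older step and in $\int_{U_i}J_g=0$ in Lemma \ref{lem: Integral of J_g on annulus}.

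Your perturbation argument needs two small fixes to be a genuine disproof. First, $c$ depends on $\norm{\Sigma/K}_{L^q(\Omega)}$, so $\Sigma_0$ must be small in that norm, not in $L^1(U)$. Second, and more importantly, the hypotheses must actually be satisfiable. The paper's positivity argument (Lemma \ref{lem: posotive degree}) rules out $\deg(f,U)=0$ whenever $U$ meets $f^{-1}\{y_0\}$. However, the Setting also allows components with $y_0\notin f(U)$. Take $f(x)=(1+|x|^2)e_1$, $y_0=0$, $K\equiv1$, $\Sigma=(2|x|/(1+|x|^2))^n+1$, $\Omega=\B^n(0,2\rho)$, $\eps=1+\rho^2$, and $U=\B^n(0,\rho)$ with $\rho$ small, so that $m_n(U)<c$. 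Then $J_f\equiv0$ and $\deg(f,U)=0$, and the two sides of the claimed equality differ by $m_n(U)$.

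For the first identity, your route is more direct than the paper's. The paper first proves $\int_{U\cap\{|f-y_0|<r\}}J_f=0$. It does so by splitting $U\cap f^{-1}\B^n(y_0,r)$ into components $U_i$ and using Lemma \ref{non negative lemma} (hence $m_n(U)<c$) together with additivity to force $\deg(f,U_i)=0$. It then integrates these identities against $nr^{-n-1}\,dr$ by Fubini and removes the $\eps^{-n}$ terms using $\int_U J_f=0$.

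You instead observe that $\deg(f,\cdot,U)\equiv0$ on $\B^n(y_0,\eps)\supset f(U)$ already forces $\int_U(v\circ f)J_f=0$ for every bounded $v$ supported in that ball. You then test with truncations of $|y-y_0|^{-n}$. Since $\min\{s^{-n},t^{-n}\}=\int_{\max\{s,t\}}^\infty nr^{-n-1}\,dr$, this is the same layer-cake computation. It avoids Lemma \ref{non negative lemma} and the smallness of $U$ entirely, however.

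The paper's detour through sub-components exists because $\deg(f,U_i)=0$ is reused in the next lemma. Your test-function argument, with $v$ supported in $\B^n(y_0,t)$, would give \eqref{Jacobian over annulus} directly there as well.
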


\begin{proof}
    We first prove that for every $r\in (0,\varepsilon)$, 
    \begin{equation}\label{J_f annulus=0}
        \int_{U\cap \{x\in\mathbb{R}^n: |f-y_0|<r\}}J_f=0.
    \end{equation}
    Let $U_i$, $i\in I$, be disjoint components of $U\cap f^{-1}\mathbb{B}^n(y_0,r)$. Since $U_i\subset U$ and since $m_n(U)<c$, by Lemma \ref{non negative lemma}, we have $\deg (f,U_i)\ge 0$ for every $i\in I$. Moreover, by the additivity and excision properties of the degree, see parts (2) and (3) of Lemma \ref{basic degree property}, we have 
    \begin{equation*}
        \sum_i\deg(f,U_i)=\deg(f,U)=0.
    \end{equation*}
    Hence, for every $i\in I$ we have 
    \begin{equation}\label{eq:subcomponent_zero_degree}
        \deg(f,U_i)=0, 
    \end{equation}
    and therefore by \eqref{degree formula} the integral of $J_f$ over every $U_i$ vanishes. The formula \eqref{J_f annulus=0} hence follows.

    Similarly as in the proof of Lemma \ref{non negative lemma}, the equation \eqref{K,Sigma-distortion} implies that
    \begin{equation*}
        \int_U\frac{J_f^-}{|f-y_0|^n}\le\int_U\frac{\Sigma}{K}<\infty.
    \end{equation*}
    Now, if we denote $U_r=U\cap f^{-1}\mathbb{B}^n(y_0,r)$ for all $r\in (0,\varepsilon)$, \eqref{J_f annulus=0} yields that
    \begin{equation*}
        \int_{U_r}J_f^+=\int_{U_r}J_f^-.
    \end{equation*}
    By multiplying the above equation by $nr^{-n-1}$, integrating over $(0,\varepsilon)$ and switching the order of integrals with Fubini's theorem, we get
    \begin{equation*}
        \int_UJ_f^+(x)\int_{|f(x)-y_0|}^\varepsilon nr^{-n-1}drdx=
        \int_UJ_f^-(x)\int_{|f(x)-y_0|}^\varepsilon nr^{-n-1}drdx.
    \end{equation*}
    By evaluating the inner integral, we have 
    \begin{equation*}
        \int_U\left(\frac{J_f^+}{|f-y_0|^n}-\frac{J_f^+}{\varepsilon^n}\right)= \int_U\left(\frac{J_f^-}{|f-y_0|^n}-\frac{J_f^-}{\varepsilon^n}\right),
    \end{equation*}
    and again using the fact that the integral of $J_f$ over $U$ is 0 yields
    \begin{equation*}
        \int_U\frac{J_f^+}{|f-y_0|^n}=\int_U\frac{J_f^-}{|f-y_0|^n}<\infty.
    \end{equation*}
    Then \eqref{K,Sigma-distortion} immediately gives that
    \begin{equation*}
        \int_U\frac{|Df|^n}{K|f-y_0|^n}=\int_U\frac{\Sigma}{K}<\infty.
    \end{equation*}
\end{proof}
    
We then consider the spherical logarithm map $g$ associated with $f$, as defined in \eqref{spherical-log- formula}, and use Lemma~\ref{zero Jacobian lemma} to obtain a vanishing property of its Jacobian.

\begin{lem}\label{lem: Integral of J_g on annulus}
    Suppose we are under Setting \ref{set:long_list_of_assumptions}. If ${\rm deg}(f,U)=0$ and $g$ is defined as in \eqref{spherical-log- formula}, then we have $g\in W^{1,\beta}(U,\mathbb{R}\times\mathbb{S}^{n-1})$ for some $\beta \in (n-1, n)$, the integral of $J_g$ over $U$ vanishes, and for every $0<t<\varepsilon$, we have
    \begin{equation}\label{Jacobian over annulus}
        \int_{\{x\in U:\ |f-y_0|<t\}}J_g=0.
    \end{equation}
\end{lem}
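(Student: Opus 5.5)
The plan is to combine Lemma \ref{zero Jacobian lemma} with Lemma \ref{lem: spherical-log} for the Sobolev regularity. We then reuse the argument behind \eqref{J_f annulus=0} to get vanishing of $J_g$ on every sublevel set.

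For the regularity, Lemma \ref{zero Jacobian lemma} gives $\int_U |Df|^n/(K|f-y_0|^n) < \infty$. Set $\beta = pn/(p+1) \in (n-1,n)$, which lies in this range because $p>n-1$. Hölder's inequality, exactly as in \eqref{uniform bounded 7}, then gives
\[
\int_U \frac{|Df|^\beta}{|f-y_0|^\beta} \le \left(\int_U \frac{|Df|^n}{K|f-y_0|^n}\right)^{\beta/n}\left(\int_U K^p\right)^{\frac{n-\beta}{n}}<\infty.
\]
We also know:
\begin{itemize}[noitemsep]
\item $f$ is not a.e.\ $y_0$ on $U$, since otherwise the degree formula would give degree zero trivially; in fact the $\Sigma$-integral identity and $|f-y_0|$ not vanishing on $\partial U$ handle this case, or it falls under the non-constant setting.
\item $f$ is continuous and $|f-y_0|<\varepsilon$ on $U$.
\end{itemize}
Hence the argument of Lemma \ref{log_is_Sobolev} and Lemma \ref{lem: spherical-log}, run on $U$ instead of a ball, gives $g_{\mathbb{S}^{n-1}}\in W^{1,\beta}(U)$ and $\nabla g_\R\in L^\beta(U)$. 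To get $g_\R\in L^\beta(U)$ globally rather than only locally, I would use a Poincaré-type argument on $U$, or simply note that only local integrability together with $L^\beta$ gradients is needed later. This integrability step is where I expect the main technical care: $U$ need not be a nice domain, so I would fall back on $W^{1,\beta}_{\loc}$ plus a global gradient bound, which suffices for the subsequent use. We also get the identities \eqref{eq:DG-and-JG}, namely $J_g = J_f/|f-y_0|^n$ a.e.

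The vanishing of $\int_U J_g$ is then exactly the first identity in \eqref{zero Jacobian}. Integrability holds because $|J_g|\le |Dg|^n$ and $J_g^-\le \Sigma/K$, together with the equality of the positive and negative parts shown in that proof.

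For \eqref{Jacobian over annulus}, fix $t\in(0,\varepsilon)$ and let $U^t = U\cap f^{-1}\B^n(y_0,t)$. We redo the proof of Lemma \ref{zero Jacobian lemma} with $t$ in place of $\varepsilon$.
\begin{itemize}[noitemsep]
\item The components $U_i$ of $U^t$ all have degree zero by \eqref{eq:subcomponent_zero_degree}, and they are again components of $f^{-1}\B^n(y_0,t)$ with small measure.
\item Hence $\int_{U_r}J_f=0$ for all $r<t$.
\item Multiply $\int_{U_r}J_f^+=\int_{U_r}J_f^-$ by $nr^{-n-1}$ and integrate over $r\in(0,t)$. Fubini's theorem gives
\[
\int_{U^t}J_f^\pm\bigl(|f-y_0|^{-n}-t^{-n}\bigr)
\]
on both sides, with the two sides equal.
\item Since $\int_{U^t}J_f=0$ and the negative side is finite, we obtain $\int_{U^t} J_f/|f-y_0|^n = 0$, that is, $\int_{U^t}J_g=0$.
\end{itemize}
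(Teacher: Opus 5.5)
Your proposal is correct in substance. The regularity step (H\"older with $\beta=pn/(p+1)$ against $\int_U K^p$, then Lemma \ref{lem: spherical-log}) matches the paper. So does the vanishing of $\int_U J_g$ via the first identity in \eqref{zero Jacobian}.

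For \eqref{Jacobian over annulus} your organization differs from the paper's. The paper takes the components $U_i$ of $U\cap f^{-1}\B^n(y_0,t)$. It notes that each satisfies Setting \ref{set:long_list_of_assumptions} with $t$ in place of $\varepsilon$ and has degree zero, applies Lemma \ref{zero Jacobian lemma} to each $U_i$ as a black box, and then sums the countably many identities $\int_{U_i}J_g=0$; this summation is where $J_g\in L^1(U)$ is used. You instead rerun the layer-cake computation once on $U^t$ with upper limit $t$. This only needs \eqref{J_f annulus=0} for $r\in(0,t]$, together with $J_f^-\le\abs{f-y_0}^n\Sigma/K$ for finiteness. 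That identity was already established for all $r\in(0,\varepsilon)$ in the proof of Lemma \ref{zero Jacobian lemma}, so your bullet re-deriving it from the components is redundant. Your version avoids checking the setting componentwise and avoids the countable summation; the paper's version is more modular and reuses the lemma verbatim. Both rest on the same Fubini identity.

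Three minor remarks.

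\begin{enumerate}
\item The global statement $g\in W^{1,\beta}(U,\R\times\S^{n-1})$ that you hesitate over needs no Poincar\'e inequality on $U$. Your caution is warranted, since Lemma \ref{lem: spherical-log} alone only gives the local version. Because $\abs{f-y_0}=\varepsilon$ on $\partial U$, the set $\{x\in\overline U:\abs{f(x)-y_0}\le\varepsilon/2\}$ is a compact subset of $U$, and $g_\R\in L^\beta$ there by the local result. On the rest of $U$ you have $\log(\varepsilon/2)<g_\R<\log\varepsilon$. Together with $\abs{Dg}=\abs{Df}/\abs{f-y_0}\in L^\beta(U)$ and the boundedness of $g_{\S^{n-1}}$ and of $U$, this gives the claim.

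\item Your first justification that $f\not\equiv y_0$ on $U$ is not a valid one, because degree zero is your hypothesis. Valid reasons are either continuity together with $\abs{f-y_0}=\varepsilon$ on $\partial U\neq\emptyset$, or non-constancy together with Lemma \ref{totally disconnected}. The latter directly gives that $f^{-1}\{y_0\}$ is null.

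\item The bound $\abs{J_g}\le\abs{Dg}^n$ does not give integrability, since $\abs{Dg}^n\in L^1(U)$ is not known. Integrability comes from $J_g^-\le\Sigma/K$ combined with the equality of the positive and negative parts.
\end{enumerate}
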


\begin{proof}
    By H\"older's inequality and \eqref{zero Jacobian}, it follows that 
    \begin{equation*}
        \int_U\frac{|Df|^\beta}{|f-y_0|^\beta}\le\left(\int_U\frac{|Df|^n}{K|f-y_0|^n}\right)^{\frac{\beta}{n}}\left(\int_UK^p\right)^{\frac{n-\beta}{n}}<\infty,
    \end{equation*}
    where $\beta=\frac{np}{p+1}\in(n-1,n)$. By Lemma \ref{lem: spherical-log}, we have $g\in W^{1,\beta}(U,\R\times\mathbb{S}^{n-1})$ and 
    \begin{equation*}
        |Dg|=\frac{|Df|}{|f-y_0|}\quad{\rm and}\quad J_g=\frac{J_f}{|f-y_0|^n}
    \end{equation*}
    for a.e. $x\in U$. 
    
    Let $0<t<\varepsilon$, and let $U_i \subset U$, $i \in I$, be the connected components of $U \cap f^{-1}B(y_0,t)$. For every $i \in I$, we have $m_n(U_i) \le m_n(U) < c$, and by the argument leading up to \eqref{eq:subcomponent_zero_degree}, we also observe that $\deg (f, U_i) = 0$. Thus, by applying part \eqref{zero Jacobian} of Lemma \ref{zero Jacobian lemma} on $U_i$, we have
    \[
        \int_{U_i}J_g=0.
    \]
    Since $U_i$ are pairwise disjoint and since $J_g$ is integrable over $U$, we may hence add up the above integral identities over $i \in I$, which yields \eqref{Jacobian over annulus}. 
\end{proof}

We then combine the previous lemma with Proposition \ref{prop:Linfty_generalization} 
to show a local essential boundedness result for $\log |f-y_0|$.

\begin{lem}\label{lem: varphi is bounded}
    Suppose we are under Setting \ref{set:long_list_of_assumptions}. If $\deg(f,U)=0$, then we have
    \begin{equation*}
        \|\varphi\|^n_{L^\infty(U)}\le C(n,p,q)\|K\|_{L^p(U)}\left\|\frac{\Sigma}{K}\right\|_{L^q(U)}[m_n(U)]^{1-p^{-1}-q^{-1}},
    \end{equation*}
    with $\varphi$ given by 
    \begin{equation}\label{eq: varphi}
        \varphi(x):=\left\{
        \begin{array}{ll}
            -g_\R(x)-\log\frac{2}{\varepsilon}, \quad &x\in U',\\ 
            0,\quad &x\in U \setminus U', 
        \end{array}
        \right.
    \end{equation}
    where $U'$ is the $x_0$-component of $f^{-1}\mathbb{B}^n(y_0,2^{-1}\varepsilon)$ and $g$ is the spherical logarithm \eqref{spherical-log- formula} of $f$ at $y_0$.
\end{lem}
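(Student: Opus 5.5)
The plan is to apply Proposition \ref{prop:Linfty_generalization} on the bounded open set $U$. We take $K$ as given, $A = \Sigma/K$, and
\[
    J = \frac{J_f}{|f-y_0|^n} = J_g .
\]
We first check that $\varphi$ meets the hypotheses of the proposition. On $U'$ we have $|f-y_0| < \varepsilon/2$, so
\[
    \varphi = \log\frac{\varepsilon}{2|f-y_0|} > 0 \quad \text{on } U',
\]
and $\varphi \equiv 0$ on $U\setminus U'$. By Lemma \ref{lem: Integral of J_g on annulus}, $g_\R \in W^{1,\beta}(U)$ with $\beta\in(n-1,n)$. Equivalently, $\varphi = (\log(\varepsilon/2) - g_\R)^+\,\mathcal X_{U'}$. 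Since $f$ is continuous and $U'$ is a component of $f^{-1}\B^n(y_0,\varepsilon/2)$ with $\overline{U'}\subset U$, we get $\varphi\in W^{1,1}_0(U)$. Concretely, the truncations $\min(\varphi,k)$ are continuous, vanish on $U\setminus U'$, and vanish near $\partial U'$ (where $|f-y_0|=\varepsilon/2$). They are therefore in $W^{1,\beta}_0$, and they converge in $W^{1,1}$ because $|\nabla \varphi| = |\nabla g_\R|\mathcal X_{U'} \in L^\beta(U)$. We also need $\varphi<\infty$ a.e.; this holds since $f^{-1}\{y_0\}$ is null by Lemma \ref{lem: log_is_Sobolev}. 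Integrability of $J$ follows from Lemma \ref{zero Jacobian lemma}: $J_g^-\le \Sigma/K$ and $\int_U J_g^+ = \int_U J_g^- <\infty$.

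The distortion condition \eqref{eq:Linfty_generalization_dist} follows pointwise from Lemma \ref{lem: spherical-log}. On $U'$,
\[
    |\nabla\varphi|^n \le |Dg|^n \le K J_g + \Sigma = K\left(J + \tfrac{\Sigma}{K}\right).
\]
On $U\setminus U'$ we have $\nabla\varphi=0$, and $J+A\ge0$ there by the same inequality for $g$. Hence the required bound holds a.e.\ in $U$, with $K>0$, $K\in L^p(U)$ and $A\in L^q(U)$.

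The main point is condition \eqref{eq:Linfty_generalization_integral}, namely $\int_{\varphi^{-1}(t,\infty)} J = 0$ for all $t\ge0$. For $t\ge 0$ we have
\[
    \varphi^{-1}(t,\infty) = U'\cap\{|f-y_0|<\tfrac{\varepsilon}{2}e^{-t}\}.
\]
The set $U\cap\{|f-y_0|<s\}$ with $s=\tfrac{\varepsilon}{2}e^{-t}$ is a disjoint union of its components. Those lying in $U'$ are exactly the components of $U'\cap f^{-1}\B^n(y_0,s)$, because $U'$ is a component of $f^{-1}\B^n(y_0,\varepsilon/2)\cap U$. In the proof of Lemma \ref{lem: Integral of J_g on annulus}, every such component $U_i$ has $\deg(f,U_i)=0$ by the additivity argument leading to \eqref{eq:subcomponent_zero_degree}. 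Applying Lemma \ref{zero Jacobian lemma} to each $U_i$ gives $\int_{U_i}J_g=0$. Summing over the components inside $U'$, which is allowed since $J_g\in L^1(U)$, gives the desired vanishing.

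I expect the delicate parts to be this component bookkeeping and the verification $\varphi\in W^{1,1}_0(U)$. One cannot simply use \eqref{Jacobian over annulus} for all of $U$; the argument has to restrict to the components contained in $U'$. This restriction works because zero degree holds for each subcomponent individually. With all hypotheses verified, Proposition \ref{prop:Linfty_generalization} yields the stated bound with $\Omega=U$.
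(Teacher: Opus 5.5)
Your proposal is correct and takes the paper's route. You apply Proposition \ref{prop:Linfty_generalization} on $U$ with $J=J_g$ and $A=\Sigma/K$, take the pointwise inequality from Lemma \ref{lem: spherical-log}, and obtain the vanishing level-set integrals from the zero-degree component argument of Lemmas \ref{zero Jacobian lemma} and \ref{lem: Integral of J_g on annulus}. Your bookkeeping is actually more careful than the paper's: it writes $\varphi^{-1}(t,\infty)$ as all of $\{x\in U : |f-y_0|<s\}\setminus f^{-1}\{y_0\}$ and cites \eqref{Jacobian over annulus} only for $t>0$, whereas you restrict to the components lying in $U'$ (which is needed when $U\cap f^{-1}\mathbb{B}^n(y_0,\varepsilon/2)$ has other components) and also handle $t=0$.
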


\begin{proof}
    By Lemma \ref{lem: Integral of J_g on annulus} and the fact that $g_\R=-\log\frac{2}{\varepsilon}$ on $\partial U'$, we have $\varphi=0$ on $\partial U'$ and $\varphi\in W^{1,\beta}_{0}(U)$ for $\beta=\frac{pn}{p+1} \in (n-1, n)$. Then, at a.e. $x\in U'$, by \eqref{eq: spherical-log-distortion}, we have $|\nabla\varphi(x)|^n=|\nabla g_\mathbb{R}(x)|^n\le|Dg(x)|^n\le K(x)J_g(x)+\Sigma(x)$. On the other hand, at a.e. $x\in U\setminus U'$, $|\nabla\varphi(x)|=0\le|Dg(x)|^n\le K(x)J_g(x)+\Sigma(x)$. In conclusion, we have
    \begin{equation}\label{eq: nabla-varphi-J_g}
        |\nabla\varphi(x)|^n\le K(x)J_g(x)+\Sigma(x)
    \end{equation}
    for a.e. $x\in U$.

    Thus, we have condition \eqref{eq:Linfty_generalization_dist} of Proposition \ref{prop:Linfty_generalization} with $\varphi \in W^{1, \beta}_0(U)$, $K \in L^p(U)$, $A = \Sigma/K \in L^q(U)$, and $J = J_g$. Moreover, by Lemma \ref{lem: Integral of J_g on annulus}, we have $J_g \in L^1(U)$, and if $t > 0$, then
    \[
        \int_{\varphi^{-1}(t, \infty)} J_g = 0
    \]
    since $\varphi^{-1}(t, \infty) = \{ x \in U : \abs{f - y_0} < s\} \setminus f^{-1}\{y_0\}$ for some $s \in (0, \eps/2)$; note here that $f^{-1}\{y_0\}$ is a null-set by Lemma \ref{totally disconnected}. Thus, condition \eqref{eq:Linfty_generalization_integral} also holds, and Proposition \ref{prop:Linfty_generalization} hence yields the claimed estimate.
\end{proof}

By combining Lemma~\ref{lem: varphi is bounded} with an appropriate choice of $U$, we obtain the desired positivity of $\deg(f,U)$.

\begin{lem}\label{lem: posotive degree}
    Suppose we are under Setting \ref{set:long_list_of_assumptions}. Then $\deg(f,U)>0$.
\end{lem}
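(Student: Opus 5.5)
By Lemma \ref{non negative lemma} we already have $\deg(f,U)\ge 0$, so I would argue by contradiction and assume $\deg(f,U)=0$. The plan is to contradict the boundedness estimate of Lemma \ref{lem: varphi is bounded}, using that $U$ contains a point $x_0$ with $f(x_0)=y_0$. Here $x_0$ is the point whose component $U'$ appears in \eqref{eq: varphi}; if $U$ contains no such point, I would first check that $y_0\notin fU$ is incompatible with $\deg(f,U)=0$ by part \eqref{enum:degree_vanishing} of Lemma \ref{basic degree property} as stated, or else reduce to components containing preimages of $y_0$.

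The contradiction itself runs as follows. Since $f$ is continuous (by \cite[Corollary 1.2]{kangasniemi2025continuity}) and $f(x_0)=y_0$, every neighborhood of $x_0$ has positive-measure subsets where $\abs{f-y_0}$ is arbitrarily small. On $U'$ we have $\varphi=-\log\abs{f-y_0}-\log(2/\eps)$, so $\varphi$ is essentially unbounded near $x_0$. The key point is that $f^{-1}\{y_0\}$ is a null set with $\mathcal H^1$-measure zero by Lemma \ref{totally disconnected}, while the sets $\{\abs{f-y_0}<s\}\cap U'$ are open, nonempty and contain $x_0$, hence have positive measure. This gives $\norm{\varphi}_{L^\infty(U)}=\infty$. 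On the other hand, Lemma \ref{lem: varphi is bounded} bounds $\norm{\varphi}^n_{L^\infty(U)}$ by $C(n,p,q)\norm{K}_{L^p(U)}\norm{\Sigma/K}_{L^q(U)}[m_n(U)]^{1-p^{-1}-q^{-1}}$, which is finite because $K\in L^p(\Omega)$, $\Sigma/K\in L^q(\Omega)$ and $m_n(U)<c$. The two facts are contradictory, so $\deg(f,U)\neq 0$, and combined with nonnegativity this gives $\deg(f,U)>0$.

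I need to make sure the setting of Lemma \ref{lem: varphi is bounded} applies. This requires $U'\subset U$, which holds because $U'$ is a component of a smaller sublevel set containing $x_0\in U$, and it requires $\deg(f,U)=0$, which is our contradiction hypothesis. I would also confirm that the $\deg(f,U)=0$ hypothesis passes to the subcomponents, as was done in the proof of Lemma \ref{zero Jacobian lemma}.

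The main obstacle is the first step: ensuring $U$ actually contains a preimage $x_0$ of $y_0$. Setting \ref{set:long_list_of_assumptions} only says $U$ is a nonempty component of $f^{-1}\mathbb B^n(y_0,\eps)$, and a component could a priori avoid $y_0$, in which case the degree is legitimately $0$. I would handle this by interpreting the lemma (as its use in Theorem \ref{open and discrete thm} suggests) for the $x_0$-component with $x_0\in f^{-1}\{y_0\}$, consistent with the reference to $x_0$ in Lemma \ref{lem: varphi is bounded}. If instead $y_0\notin fU$, the degree theory should force $\deg(f,U)=0$, so I would state the hypothesis $y_0\in fU$ explicitly.
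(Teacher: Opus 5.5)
Your proposal is correct and is essentially the paper's argument: assuming $\deg(f,U)=0$, Lemma~\ref{lem: varphi is bounded} bounds $\log|f-y_0|$ in $L^\infty$ near $x_0$, which contradicts $f(x_0)=y_0$ for a point $x_0\in U$. The paper also implicitly assumes such an $x_0\in U$, which matches your $x_0$-component reading. Do drop the appeal to part~(1) of Lemma~\ref{basic degree property} ``as stated'': that item has a misprint (it should say that $\deg(f,y,U)\neq 0$ implies $y\in fU$), and, as you observe yourself, a component missing $y_0$ really has degree zero, so $y_0\in fU$ must be assumed rather than derived.
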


\begin{proof}
    Suppose towards contradiction that $\deg(f,U)=0$. Lemma \ref{lem: varphi is bounded} then implies that the $L^\infty$-norm of $\log |f-y_0|$ is finite, that is, $y_0\notin f(U)$, which contradicts $x_0\in f^{-1}\{y_0\}\cap U$. The proof is hence complete.
\end{proof}

\section{The single-value Reshetnyak's theorem}
We now have the essential ingredient required to prove Theorem \ref{open and discrete thm}, namely Lemma \ref{lem: posotive degree}. The remaining parts of the proof are essentially identical to those in \cite[Section 6]{kangasniemi2025continuity}, though we regardless recall those steps for the convenience of the reader.

We begin by proving discreteness, which we require in order to have
a well-defined local index.

\begin{lem}\label{lem: discrete}
    Let $\Omega\subset\mathbb{R}^n$ be a domain, let $y_0\in\mathbb{R}^n$, and let $p>n-1,q>1$ with $p^{-1}+q^{-1}<1$. Suppose that
    $f\in W^{1,n}_{\rm loc}(\Omega,\mathbb{R}^n)$ has a value of finite distortion at $y_0\in\mathbb{R}^n$ with data $(K,\Sigma)$, where $K:\Omega\to[1,\infty)$ and $\Sigma:\Omega\to[0,\infty)$ are measurable functions such that
    \begin{equation*}
        K\in L^p_{\rm loc}(\Omega) \quad{\rm and}\quad
        \frac{\Sigma}{K}\in L^{q}_{\rm loc}(\Omega).
    \end{equation*}
    Then $f^{-1}\{y_0\}$ is a discrete subset of $\Omega$.
\end{lem}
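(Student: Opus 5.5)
The plan is to argue by contradiction, following the standard route for quasiregular values in \cite[Section 6]{kangasniemi2025single}. First dispose of the trivial case: if $f \equiv y_0$ a.e., then $f^{-1}\{y_0\} = \Omega$. That case is excluded here (or is handled by the dichotomy of Lemma \ref{totally disconnected}), so we assume $f \not\equiv y_0$. Then Lemma \ref{totally disconnected} gives that $f^{-1}\{y_0\}$ is totally disconnected, and $f$ is continuous by \cite[Corollary 1.2]{kangasniemi2025continuity}. Suppose $x_0 \in f^{-1}\{y_0\}$ is an accumulation point of $f^{-1}\{y_0\}$. Fix a ball $B \Subset \Omega$ around $x_0$ and restrict $f$ to it, so that $K \in L^p(B)$ and $\Sigma/K \in L^q(B)$. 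Now Setting \ref{set:long_list_of_assumptions} applies with $\Omega$ replaced by $B$, with the constant $c = c(n,q,K,\Sigma,B)$ from Lemma \ref{non negative lemma}.

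By \cite[Lemma 4.1]{kangasniemi2025single}, for all sufficiently small $\varepsilon > 0$ the $x_0$-component $U_\varepsilon$ of $f^{-1}\mathbb{B}^n(y_0,\varepsilon)$ has compact closure in $B$, and $\operatorname{diam} U_\varepsilon \to 0$ as $\varepsilon \to 0$. In particular $m_n(U_\varepsilon) < c$ for small $\varepsilon$, so the whole of Setting \ref{set:long_list_of_assumptions} holds. The key quantity is $\deg(f,U_\varepsilon)$, a positive integer by Lemma \ref{lem: posotive degree}.

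Next, use the total disconnectedness to split $U_\varepsilon$. Since $f^{-1}\{y_0\}\cap \overline{U_\varepsilon}$ is compact and totally disconnected, and contains infinitely many points accumulating at $x_0$, I will pick $N$ distinct points $x_1,\dots,x_N \in f^{-1}\{y_0\} \cap U_\varepsilon$, with $N > \deg(f,U_\varepsilon)$. Then I will choose $\delta \in (0,\varepsilon)$ so small that the $x_j$-components $V_j$ of $f^{-1}\mathbb{B}^n(y_0,\delta)$ are pairwise distinct. This is possible because, again by \cite[Lemma 4.1]{kangasniemi2025single}, the diameters of these components shrink to $0$ as $\delta \to 0$, and components of the same open set are either equal or disjoint. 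Each $V_j$ is contained in $U_\varepsilon$, has compact closure, and has measure less than $c$. So Lemma \ref{lem: posotive degree} gives $\deg(f,V_j) \ge 1$. Every other component of $U_\varepsilon \cap f^{-1}\mathbb{B}^n(y_0,\delta)$ has nonnegative degree by Lemma \ref{non negative lemma}. Applying the additivity and excision properties of Lemma \ref{basic degree property} (with the compact set $\overline{U_\varepsilon}\setminus f^{-1}\mathbb{B}^n(y_0,\delta)$, on which $y$ near $y_0$ is not attained), as in the proof of Lemma \ref{zero Jacobian lemma}, we obtain $\deg(f,U_\varepsilon) \ge N$, a contradiction.

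The main obstacle is making the choice of $N$ legitimate, since $\varepsilon$ must be fixed before $N$ is chosen. I resolve this by fixing $\varepsilon$ first, which fixes the integer $D = \deg(f,U_\varepsilon)$, and only then selecting $N = D+1$ points of $f^{-1}\{y_0\}$ inside $U_\varepsilon$. This is possible since $x_0$ is an accumulation point and $U_\varepsilon$ is a neighborhood of $x_0$. I also need to check that the countable additivity in Lemma \ref{basic degree property} applies to the possibly infinitely many components. Only finitely many components can meet $f^{-1}\{y_0\}$ nontrivially in degree, and the sum is a finite integer with all terms nonnegative, so this is fine.
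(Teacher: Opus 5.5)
Your proof is correct and follows essentially the same route as the paper: shrinking components around preimage points, nonnegative degree for small components (Lemma \ref{non negative lemma}), positive degree for components containing a preimage of $y_0$ (Lemma \ref{lem: posotive degree}), and additivity/excision. The only difference is bookkeeping. You separate $D+1$ preimage points at a single scale $\delta$ to get $D\ge D+1$, whereas the paper splits off one extra preimage point at a time to produce an infinite strictly decreasing sequence $\deg(f,U_0)>\deg(f,U_1)>\cdots$ of positive integers. You are also right that the case $f\equiv y_0$ has to be excluded; the paper's proof assumes this implicitly.
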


\begin{proof}
    We wish to show that every $x\in f^{-1}\{y_0\}$ has a neighborhood that does not meet the rest of $ f^{-1}\{y_0\}$. For this, we suppose towards contradiction that every neighborhood of a given $x_0\in f^{-1}\{y_0\}$ meets $f^{-1}\{y_0\}\setminus 
    \{x_0\}$. By restricting $f$ to a smaller neighborhood of $x_0$ if necessary, we may assume that $K \in L^p(\Omega)$ and $\Sigma/K \in L^q(\Omega)$. By \cite[Lemma 4.1 and Corollary 4.3]{kangasniemi2025continuity}, we may select an $\varepsilon_0>0$
    such that the $x_0$-component $U_0$ of $f^{-1}\mathbb{B}^n(x_0, \varepsilon_0)$ is compactly contained in $\Omega$, and
    $m_n(U_0)<c$, with $c$ given by Lemma \ref{non negative lemma}. Then the degree $\deg(f,U_0)$ is a finite
    integer; note that the finiteness can be immediately seen from (2.1),
    since $J_f$ is integrable on $U_0$.

    By our counterassumption, there exists an $x_1\in U_0 \cap (f^{-1}\{y_0\} \setminus \{x_0\})$. By \cite[Lemma 4.2]{kangasniemi2025continuity}, we may select an $\varepsilon_1\in(0,\varepsilon_0)$ such that if $U_1$ is the $x_0$-component of $f^{-1}\mathbb{B}^n(y_0,\varepsilon_1)$, then $x_1\notin U_1$. We let $U_{1,i}$ be the other components of $f^{-1}\mathbb{B}^n(y_0,\varepsilon_1)$ contained in $U_0$, where $i\in I_1$. Then by Lemma \ref{non negative lemma}, we have $\deg(f,U_{1,i})\ge0$ for all $i\in I_1$, since $m_n(U_{1,i})\le m_n(U_0)<c$. Moreover, since $x_1$ is necessarily in one of the sets $U_{1,i}$, we must have $\deg(f,U_{1,i})>0$ for at least one $i\in I_1$ by Lemma \ref{lem: posotive degree}.

    Now, we may use the excision and additivity properties of the degree, see Lemma \ref{basic degree property}, to conclude that
    \begin{equation*}
        \deg(f,U_0)=\deg(f,U_1)+\sum_{i\in I_1}\deg(f,U_{1,i})>\deg(f,U_1).
    \end{equation*}
    We can then repeat this procedure to find $U_2$, $U_3$, ... such that $\deg(f,U_1)>\deg(f,U_2)>\cdots$. This is, however, a contradiction, as all degrees $\deg(f,U_i)$ are positive integers by Lemma \ref{lem: posotive degree}. Hence, we conclude that our claim of discreteness holds.
\end{proof}

Under the assumptions of Theorem \ref{open and discrete thm} the discreteness of $f^{-1}\{y_0\}$ implies that the local index $i(x,f)$ is well defined for $x\in f^{-1}\{y_0\}$. Moreover, the sense-preserving part follows almost immediately from our results.

\begin{cor}\label{cor: positive local index}
    Let $\Omega\subset\mathbb{R}^n$ be a domain, let $y_0\in\mathbb{R}^n$, and let $p>n-1,q>1$ with $p^{-1}+q^{-1}<1$. Suppose that
    $f\in W^{1,n}_{\rm loc}(\Omega,\mathbb{R}^n)$ has a value of finite distortion at $y_0\in\mathbb{R}^n$ with data $(K,\Sigma)$, where $K:\Omega\to[1,\infty)$ and $\Sigma:\Omega\to[0,\infty)$ are measurable functions such that
    \begin{equation*}
        K\in L^p_{\rm loc}(\Omega) \quad{\rm and}\quad
        \frac{\Sigma}{K}\in L^{q}_{\rm loc}(\Omega).
    \end{equation*}
    Then for every $x_0\in f^{-1}\{y_0\}$, the local index $i(x_0,f)$ is well-defined and we have $i(x_0,f)>0$.
\end{cor}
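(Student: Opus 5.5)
The argument is a direct combination of Lemma \ref{lem: discrete} with Lemma \ref{lem: posotive degree}, after a localization.

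\emph{Step 1: the local index exists.} Fix $x_0\in f^{-1}\{y_0\}$. By Theorem \ref{thm:lusin_N}, $f$ has the Lusin (N) -property, and by \cite[Corollary 1.2]{kangasniemi2025continuity} it has a continuous representative, so the degree theory of Section \ref{sec: degree and index} applies. Lemma \ref{lem: discrete} shows that $x_0$ is an isolated point of $f^{-1}\{y_0\}$. Hence the local index $i(x_0,f)$ is well defined.

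\emph{Step 2: localize so that Setting \ref{set:long_list_of_assumptions} holds.} Choose a ball $B$ compactly contained in $\Omega$ with $x_0 \in B$ and $\overline{B}\cap f^{-1}\{y_0\}=\{x_0\}$. Then replace $\Omega$ by $B$. After this replacement we have $K\in L^p(B)$ and $\Sigma/K\in L^q(B)$, and the constant $c=c(n,q,K,\Sigma,B)>0$ from Lemma \ref{non negative lemma} is available. By \cite[Lemma 4.1]{kangasniemi2025single} (equivalently \cite[Lemma 4.1, Corollary 4.3]{kangasniemi2025continuity}), the $x_0$-components $U_\varepsilon$ of $f^{-1}\mathbb{B}^n(y_0,\varepsilon)$ shrink to $x_0$ as $\varepsilon\to0$. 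So for all sufficiently small $\varepsilon$ we get $\overline{U_\varepsilon}\subset B$ and $m_n(U_\varepsilon)<c$, and $U_\varepsilon$ is a non-empty component. Setting \ref{set:long_list_of_assumptions} is therefore satisfied with $\Omega$ replaced by $B$ and $U=U_\varepsilon$.

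\emph{Step 3: identify the index with the degree.} Since $U_\varepsilon\subset B$, we have $U_\varepsilon\cap f^{-1}\{y_0\}=\{x_0\}$. Also $y_0\notin f\partial U_\varepsilon\subset\partial\mathbb{B}^n(y_0,\varepsilon)$. By the definition of the local index (which relies on the excision property),
\[
i(x_0,f)=\deg(f,y_0,U_\varepsilon)=\deg(f,U_\varepsilon).
\]
Lemma \ref{lem: posotive degree} gives $\deg(f,U_\varepsilon)>0$, and therefore $i(x_0,f)>0$.

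The main point of care is Step 2. The constant $c$ depends on the domain through the norms of $K$ and $\Sigma/K$, so we must fix $B$ first and only then shrink $\varepsilon$. This ordering is harmless, because shrinking $\varepsilon$ does not change $c$. One must also confirm that restricting $f$ to $B$ keeps it a map with a value of finite distortion at $y_0$ with the same data, which is immediate since the defining inequality is pointwise.
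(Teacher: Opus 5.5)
Your proposal is correct and follows essentially the same route as the paper: Lemma \ref{lem: discrete} makes the local index well defined, you localize so that $K\in L^p$ and $\Sigma/K\in L^q$ on a fixed domain, shrink $\varepsilon$ until $U_\varepsilon$ satisfies Setting \ref{set:long_list_of_assumptions}, identify $i(x_0,f)=\deg(f,U_\varepsilon)$, and apply Lemma \ref{lem: posotive degree}. The only difference is cosmetic: you localize to an isolating ball $B$, whereas the paper takes $\Omega=U_{\varepsilon_0}$. Also, once $\overline{B}\cap f^{-1}\{y_0\}=\{x_0\}$, both $\overline{U_\varepsilon}\subset B$ and $m_n(U_\varepsilon)\to 0$ follow elementarily from continuity and compactness, so no external citation is needed for that step.
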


\begin{proof}
    By Lemmas \cite[Lemma 4.1]{kangasniemi2025continuity} and \cite[Lemma 4.2]{kangasniemi2025continuity}, along with the discreteness of $f^{-1}\{y_0\}$ shown in
    Lemma \ref{lem: discrete}, there exists an $\varepsilon_0 > 0$ such that for all $\varepsilon \in (0, \varepsilon_0]$, the $x_0$-component $U_\varepsilon$ of
    $f^{-1}\mathbb{B}^n(y_0, \varepsilon)$ is compactly contained in $\Omega$ and satisfies $U_\varepsilon\cap f^{-1}\{y_0\}= \{x_0\}$. Consequently, $i(x_0, f)=\deg(f,U_\varepsilon)$ for any such $\varepsilon$. Hence, by applying Lemma \ref{lem: posotive degree} with $\Omega = U_{\eps_0}$ and Corollary
    \cite[Corollary 4.3]{kangasniemi2025continuity}, it follows that $i(x_0,f)>0$.
\end{proof}

The final property to be deduced is the local openness property.
\begin{lem}\label{lem:open}
    Let $\Omega\subset\mathbb{R}^n$ be a domain, let $y_0\in\mathbb{R}^n$, and let $p>n-1,q>1$ with $p^{-1}+q^{-1}<1$. Suppose that $f\in W^{1,n}_{\rm loc}(\Omega,\mathbb{R}^n)$ has a value of finite distortion at $y_0\in\mathbb{R}^n$ with data $(K,\Sigma)$, where $K:\Omega\to[1,\infty)$ and $\Sigma:\Omega\to[0,\infty)$ are measurable functions such that
    \begin{equation*}
        K\in L^p_{\rm loc}(\Omega) \quad{\rm and}\quad
        \frac{\Sigma}{K}\in L^{q}_{\rm loc}(\Omega).
    \end{equation*}
    Then for every $x_0 \in f^{-1}\{y_0\}$ and every neighborhood $V$ of $x_0$, we have $y_0\in{\rm int}\ f(V)$.
\end{lem}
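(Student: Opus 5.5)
We would argue via the degree, using that the degree is constant on the ball $\mathbb{B}^n(y_0,\varepsilon)$ and nonzero there. Fix $x_0\in f^{-1}\{y_0\}$ and a neighborhood $V$ of $x_0$.

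\emph{Choice of $\varepsilon$.} By the discreteness of Lemma \ref{lem: discrete} and \cite[Lemma 4.1, Lemma 4.2, Corollary 4.3]{kangasniemi2025continuity}, as in the proof of Corollary \ref{cor: positive local index}, there is an $\varepsilon_0>0$ such that for every $\varepsilon\in(0,\varepsilon_0]$ the $x_0$-component $U_\varepsilon$ of $f^{-1}\mathbb{B}^n(y_0,\varepsilon)$ is compactly contained in $\Omega$ and $U_\varepsilon\cap f^{-1}\{y_0\}=\{x_0\}$. We restrict $f$ to $U_{\varepsilon_0}$ if necessary, so that $K\in L^p$ and $\Sigma/K\in L^q$ there; then Setting \ref{set:long_list_of_assumptions} applies to $U_\varepsilon$ for all small $\varepsilon$. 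Since the components $U_\varepsilon$ shrink to $x_0$ as $\varepsilon\to 0$ (this is the content of \cite[Lemma 4.2]{kangasniemi2025continuity}, given that $x_0$ is isolated in its fibre), we can fix $\varepsilon$ small enough that $\overline{U_\varepsilon}\subset V$.

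\emph{Covering the ball.} By Lemma \ref{lem: posotive degree}, or equivalently Corollary \ref{cor: positive local index}, $\deg(f,U_\varepsilon)=i(x_0,f)>0$. Recall that $f\partial U_\varepsilon\subset\partial\mathbb{B}^n(y_0,\varepsilon)$. By the definition of $\deg(f,U_\varepsilon)$, for every $y\in\mathbb{B}^n(y_0,\varepsilon)$ we have $\deg(f,y,U_\varepsilon)=\deg(f,U_\varepsilon)\neq 0$. Part \eqref{enum:degree_vanishing} of Lemma \ref{basic degree property} (a nonzero degree forces $y\in fU_\varepsilon$) then gives $y\in f(U_\varepsilon)$. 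Hence
\[
\mathbb{B}^n(y_0,\varepsilon)\subset f(U_\varepsilon)\subset f(V),
\]
and therefore $y_0\in\operatorname{int} f(V)$.

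\emph{Main obstacle.} The main care point is ensuring that $U_\varepsilon\subset V$ for small $\varepsilon$. If the cited lemma gave only compactness and not shrinking, we would argue directly. Suppose $U_\varepsilon\not\subset V$ for all small $\varepsilon$. The sets $\overline{U_\varepsilon}$ are nested continua containing $x_0$, all inside the compact set $\overline{U_{\varepsilon_0}}$. Their intersection is then a continuum that contains $x_0$ and meets $\partial V$ (for $V$ compactly inside $U_{\varepsilon_0}$), and on which $f\equiv y_0$. This contradicts the discreteness of $f^{-1}\{y_0\}$ from Lemma \ref{lem: discrete}.
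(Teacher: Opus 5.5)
Your proof is correct and is essentially the paper's argument: the positive local index from Corollary \ref{cor: positive local index}, combined with the nonvanishing-degree property, gives $\mathbb{B}^n(y_0,\varepsilon)\subset f(U_\varepsilon)$, and a nested-compact-sets argument forces $U_\varepsilon\subset V$ for small $\varepsilon$. The paper phrases the second step as a contradiction using the decreasing nonempty compact sets $\overline{U_\varepsilon}\setminus V$ together with $\bigcap_\varepsilon\overline{U_\varepsilon}=\{x_0\}$. Your continuum version works equally well, although connectedness is not needed: $\overline{U_{\varepsilon_0}}\cap f^{-1}\{y_0\}=\{x_0\}$ already pins down the intersection.
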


\begin{proof}
    Suppose towards contradiction that $V$ is an open set containing $x_0\in f^{-1}\{y_0\}$ and that $y_0\notin {\rm int}\ f(V)$. For all $\varepsilon>0$, we use $U_\varepsilon$ to denote the $x_0$-component of $f^{-1}\mathbb{B}^n(y_0,\varepsilon)$. By \cite[Lemmas 4.1 and 4.2]{kangasniemi2025continuity}, we may again select an $\varepsilon_0> 0$ such that $U_{\varepsilon_0}$ is a compact subset of $\Omega$ and $U_{\varepsilon_0}\cap f^{-1}\{y_0\}=\{x_0\}$.

    Let $\varepsilon<\varepsilon_0$, in which case $U_\varepsilon$ is a compact subset of $\Omega$ and $U_\varepsilon\cap f^{-1}\{y_0\}=\{x_0\}$. By Corollary \ref{cor: positive local index} we have $i(x_0,f)>0$, so the definition of local index implies that $\deg(f,U_\varepsilon)>0$. In particular, we have $\deg(f,y,U_\varepsilon)>0$ for all $y\in \mathbb{B}^n(y_0,\varepsilon)$, which by part \eqref{enum:degree_vanishing} of Lemma \ref{basic degree property} implies that $f(U_\varepsilon)= \mathbb{B}^n(y_0,\varepsilon)$.
  
    By our counterassumption, $f(V)$ does not contain $\mathbb{B}^n(y_0,\varepsilon)= f(U_\varepsilon)$. We must
    hence have that $U_\varepsilon\setminus V$ is non-empty. Now, $\overline{U_\varepsilon}\setminus V $ with $\varepsilon\in(0,\varepsilon_0)$ form a decreasing family of non-empty compact subsets of $\Omega$. Hence, their intersection is non-empty. However, this is a contradiction, since
    \begin{align*}
        \bigcap_{\varepsilon\in(0,\varepsilon_0)}(\overline{U_\varepsilon}\setminus V)=\left(\bigcap_{\varepsilon\in(0,\varepsilon_0)}\overline{U_\varepsilon}\right)\setminus V=\{x_0\}\setminus V=\emptyset.
    \end{align*}
    Our original claim therefore holds.
\end{proof}

Thus, by combining Lemma \ref{lem: discrete}, Corollary \ref{cor: positive local index}, and Lemma \ref{lem:open}, Theorem \ref{open and discrete thm} immediately follows. 

\section{Proofs of the Liouville-type theorems}

We begin by recalling a classical version of the Caccioppoli inequality. The statement in fact follows as a special case of Lemma \ref{tech lemma}, though there is a simpler proof via a basic integration by parts. 

\begin{lem}\label{lem: Caccioppoli ineq}
    Let $\Omega\subset\mathbb{R}^n$ be a domain, and let $f\in W^{1,n}_{\rm loc}(\Omega,\mathbb{R}^n)$. Then for every test function $\eta\in C^\infty_0(\Omega)$, we have 
    \begin{equation}\label{ineq: Caccioppoli inequality}
        \left|\int_{\Omega}\eta^nJ_f\right|\le n\int_{\R^n}\eta^{n-1}|\nabla\eta||Df|^{n-1}|f|.
    \end{equation}
\end{lem}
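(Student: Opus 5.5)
The plan is to prove \eqref{ineq: Caccioppoli inequality} first for smooth maps and then pass to the limit by density. The key fact is that the Jacobian is a null Lagrangian in divergence form:
\[
J_f = \det Df = \sum_{j=1}^n \partial_j\big(f_1 \, A_{1j}\big),
\]
where $A_{1j}$ denotes the $(1,j)$-cofactor of $Df$. This uses the Piola identity $\sum_j \partial_j A_{1j} = 0$, valid for $C^2$ maps.

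\textbf{Step 1 (smooth case).} Take $f \in C^\infty(\Omega,\R^n)$. Multiply by $\eta^n$ and integrate by parts; there are no boundary terms since $\eta$ has compact support. This gives
\[
\int_\Omega \eta^n J_f = -n\int_\Omega \eta^{n-1} f_1 \sum_j \partial_j\eta \, A_{1j}.
\]
The vector $(A_{1j})_j$ is the first row of $\operatorname{cof} Df$, so its Euclidean norm is bounded by $|Df|^{n-1}$. Indeed, its entries are $(n-1)$-minors taken from rows $2,\dots,n$, and this row has norm at most the product of the singular values other than the smallest, which is $\le |Df|^{n-1}$. Combining this with $|f_1|\le |f|$ and Cauchy--Schwarz in $j$ yields
\[
\Bigl|\int \eta^n J_f\Bigr| \le n\int \eta^{n-1}|\nabla\eta|\,|Df|^{n-1}|f|.
\]

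\textbf{Step 2 (approximation).} Fix $\eta$ with $\spt\eta \subset D \Subset \Omega$. Take mollifications $f_k \to f$ in $W^{1,n}(D)$. Then $J_{f_k}\to J_f$ in $L^1(D)$, since the determinant is multilinear and Hölder's inequality with $n$ factors in $L^n$ applies. For the right-hand side, $|Df_k|^{n-1}\to |Df|^{n-1}$ in $L^{n/(n-1)}(D)$. Also $f_k\to f$ in $L^n(D)$ by the Sobolev embedding, and $\eta^{n-1}|\nabla\eta|$ is bounded. The product of the two factors therefore converges in $L^1$, and both sides pass to the limit.

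The main point of care is the cofactor-norm bound $|\operatorname{cof}(Df)_{1\cdot}|\le |Df|^{n-1}$, which should be justified via the singular value decomposition. Everything else is routine. If one does not want to rely on that bound, an alternative is to use Lemma \ref{tech lemma} with $\Psi\equiv 1$ together with a cutoff argument, but that route only gives the constant $\sqrt n$ and requires $\eta^n$ in place of $\eta$. The direct integration-by-parts proof is cleaner and gives the stated constant $n$.
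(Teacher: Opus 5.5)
Your proof is correct and follows the ``basic integration by parts'' route that the paper mentions without writing out (the paper also notes the lemma follows from Lemma~\ref{tech lemma}); the divergence form $J_f=\sum_j\partial_j(f_1A_{1j})$ via the Piola identity, the bound $|(A_{1j})_j|\le|Df|^{n-1}$, and the mollification limit all go through as you describe. One correction to your closing remark: Lemma~\ref{tech lemma} with $\Psi\equiv1$ and $\eta^n$ in place of $\eta$ gives the constant $\sqrt n\le n$, so it yields a \emph{stronger} inequality, not a weaker one, though it needs an extra approximation step because that lemma assumes $f\in L^\infty_{\loc}$; also, both routes actually produce $|\eta|^{n-1}$ on the right-hand side, which is how the statement should be read when $\eta$ changes sign.
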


We begin with a lemma about vanishing integrals of Jacobians under specific assumptions. Notably, in the following lemma, we do not yet assume any sort of distortion inequality for our function.
\begin{lem}\label{lem: J_f=0}
    Let $f \in W^{1,n}_\loc(\R^n, \R^n)$, and let $K \colon \R^n \to [1, \infty)$ be a measurable function such that $\mathcal{K}_{n-1} < \infty$, where $\mathcal{K}_{n-1}$ is defined as in \eqref{ineq: K_n}. Suppose that $f \in L^\infty(\R^n, \R^n)$, $J_f^{-} \in L^1(\R^n)$, and $\abs{Df}^n/K \in L^1(\R^n)$. Then $J_f \in L^1(\R^n)$, and
    \begin{equation}\label{eq: int J_f=0}
        \int_{\R^n}J_f=0.
    \end{equation}
\end{lem}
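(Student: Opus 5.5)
We use the Caccioppoli inequality of Lemma \ref{lem: Caccioppoli ineq} with radial cutoffs at growing scales, controlling the right-hand side by Hölder's inequality against $K^{n-1}$ on annuli. The key point is that the averaged hypothesis $\mathcal{K}_{n-1}<\infty$ exactly compensates $|\nabla\eta|^n \sim R^{-n}$, while $\abs{Df}^n/K\in L^1$ supplies a tail that tends to zero.

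\emph{Step 1: $J_f\in L^1(\R^n)$.} Since $J_f^-\in L^1$, it suffices to bound $\int \eta_R^n J_f^+$ uniformly in $R$. Here $\eta_R\in C^\infty_0(\B(0,2R))$ satisfies $0\le\eta_R\le1$, $\eta_R\equiv 1$ on $\B(0,R)$ and $|\nabla\eta_R|\le 2/R$, and we write $A_R=\B(0,2R)\setminus\B(0,R)$. By Lemma \ref{lem: Caccioppoli ineq},
\[
\int\eta_R^nJ_f^+ \le \int \eta_R^n J_f^- + n\norm{f}_{L^\infty}\int_{A_R}|\nabla\eta_R|\,|Df|^{n-1}.
\]
Writing $|Df|^{n-1}=(|Df|^n/K)^{(n-1)/n}K^{(n-1)/n}$ and applying Hölder with exponents $n/(n-1)$ and $n$, the last integral is at most
\[
\frac{2}{R}\Bigl(\int_{A_R}\frac{|Df|^n}{K}\Bigr)^{\frac{n-1}{n}}\Bigl(\int_{A_R}K^{n-1}\Bigr)^{\frac1n}.
\]
We have $\int_{A_R}K^{n-1}\le \int_{\B(0,2R)}K^{n-1}\le C(n)\mathcal{K}_{n-1}R^n$ for $R\ge1$. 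Hence the whole term is bounded by
\[
C(n)\norm{f}_\infty\mathcal{K}_{n-1}^{1/n}\Bigl(\int_{A_R}\frac{|Df|^n}{K}\Bigr)^{\frac{n-1}{n}},
\]
which is uniformly bounded in $R$. Monotone convergence then gives $J_f^+\in L^1$.

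\emph{Step 2: vanishing of the integral.} With $J_f\in L^1$, dominated convergence gives $\int\eta_R^nJ_f\to\int_{\R^n}J_f$ as $R\to\infty$. By Step 1, $|\int\eta_R^nJ_f|$ is bounded by the same expression, which contains the factor $\bigl(\int_{A_R}|Df|^n/K\bigr)^{(n-1)/n}$. This factor tends to $0$ because $|Df|^n/K\in L^1(\R^n)$ and $A_R\subset\R^n\setminus\B(0,R)$. Hence $\int_{\R^n}J_f=0$.

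The step needing most care is applying Lemma \ref{lem: Caccioppoli ineq} to $\eta^n J_f$ when $J_f$ is not yet known to be integrable. This is harmless: $f\in W^{1,n}_\loc$ gives $J_f\in L^1_\loc$, and the cutoffs are compactly supported. Another point to check is that $\eta_R$ is supported where the annulus estimate applies, which holds since $\nabla\eta_R$ lives on $A_R$. The splitting into $J_f^\pm$ is only needed to pass to the limit in Step 1. No distortion inequality is used anywhere.
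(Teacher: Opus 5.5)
Your proof is correct and follows essentially the same route as the paper: the Caccioppoli inequality with cutoffs $\eta_R$, then H\"older's inequality splitting $|Df|^{n-1}$ against $K^{(n-1)/n}$, so that $\mathcal{K}_{n-1}$ absorbs $|\nabla\eta_R|^n\sim R^{-n}$. The tail of $|Df|^n/K$ then forces $\int\eta_R^nJ_f\to0$, and dominated convergence finishes. The differences are cosmetic: the paper first shows $|\int\eta_R^nJ_f|\to0$ and then deduces $\int J_f^+\le\int J_f^-$, whereas you first obtain a uniform bound; and if your $\eta_R$ are not monotone in $R$, Fatou's lemma should replace monotone convergence.
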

\begin{proof}
    Let $\eta_r\in C^\infty(\R^n,[0,1])$ be such that $\eta_r|_{B^n(0,r)}\equiv1$, $\eta_r|_{\R^n\setminus B^n(0,2r)}\equiv 0$, and $|\nabla\eta_r|\le 2/r$. We begin by showing that
    \begin{equation}\label{eq: eta_n J_f^+ is zero}
        \lim_{r\to\infty}\left|\int_{\R^n}\eta_r^nJ_f\right|=0.
    \end{equation}
    Indeed, by applying the Caccioppoli inequality from Lemma \ref{lem: Caccioppoli ineq} together with H\"older's inequality, we obtain that
    \begin{multline}\label{ineq: eta_rJ_f}
    \left|\int_{\R^n}\eta^n_rJ_f\right|\le n\int_{\R^n}\eta_r^{n-1}|Df|^{n-1}\left|f\right||\nabla\eta_r|\\
    \qquad\le n\left(\int_{{\rm spt}|\nabla\eta_r|}\frac{\eta^n_r|Df|^n}{K}\right)^{\frac{n-1}{n}}\left(\|f\|^n_{L^\infty(\mathbb{R}^n)}
    \dashint_{B^n(0,2r)}K^{n-1}\right)^{\frac{1}{n}}\\
    \qquad\le C\left(\int_{\mathbb{R}^n\setminus B(0,r)}\frac{\eta^n_r|Df|^n}{K}\right)^{\frac{n-1}{n}}\left(
    \dashint_{B^n(0,2r)}K^{n-1}\right)^{\frac{1}{n}},
    \end{multline}
    where the constant $C$ only depends on $n$ and $\|f\|_{L^\infty(\mathbb{R}^n)}$.  Since $|Df|^n/K\in L^1(\R^n)$, the first integral term on the right hand side tends to 0 as $r\to\infty$, while the second term remains bounded. Therefore, the proof of \eqref{eq: eta_n J_f^+ is zero} is complete.

    Now, we use \eqref{ineq: eta_rJ_f} and monotone convergence to argue that
    \begin{multline}\label{ineq: J_f^+}
            \int_{\R^n}J_f^+
            = \lim_{r \to \infty} \int_{\R^n} \eta_r^n J_f^+\\
            \le \limsup_{r \to \infty} 
            \left( \left|\int_{\R^n}\eta_r^nJ_f\right|+\int_{\R^n}\eta_r^nJ_f^- \right)
            = \int_{\R^n}J_f^-.
    \end{multline}
    As $J_f^{-} \in L^1(\R^n)$ by assumption, we thus have $J_f^{+} \in L^1(\R^n)$, and consequently $J_f \in L^1(\R^n)$. Finally, since $|\eta_r^nJ_f|\le |J_f|\in L^1(\mathbb{R}^n)$, by dominated convergence and \eqref{eq: eta_n J_f^+ is zero}, we have
    \begin{equation}
        \int_{\R^n}J_f=\lim_{r\to\infty}\int_{\R^n}\eta_r^nJ_f=0.
    \end{equation}
\end{proof}

Next, we prove that under suitable assumptions, a bounded function satisfying a distortion inequality with defect ends up satisfying the assumptions of Lemma \ref{lem: J_f=0}.

\begin{lem}\label{lem: J_f L^1 J_f=0}
    Let $y_0\in\mathbb{R}^n$. Suppose that $f\in W^{1,n}_{\rm loc}(\R^n,\mathbb{R}^n)$ has a value of finite distortion at $y_0\in\mathbb{R}^n$ with data $(K,\Sigma)$, where $K \colon \R^n\to[1,\infty)$ and $\Sigma\colon \R^n\to[0,\infty)$ are measurable functions such that 
    \begin{equation*}
        \mathcal{K}_{n-1} < \infty
        \quad{\rm and}\quad 
        \frac{\Sigma}{K} \in L^1(\R^n).
    \end{equation*}
    If $f$ is bounded, then $|Df|^n/K\in L^1(\R^n)$, $J_f\in L^1(\R^n)$, and
    \[
        \int_{\R^n} J_f = 0.
    \]
\end{lem}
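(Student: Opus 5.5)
The plan is to reduce everything to Lemma \ref{lem: J_f=0}. Since $f$ is bounded, say $\abs{f-y_0}\le M$ on $\R^n$, the distortion inequality \eqref{K,Sigma-quasiregular} gives
\[
    \frac{\abs{Df}^n}{K} \le J_f + \frac{\Sigma}{K}\abs{f-y_0}^n \le J_f + M^n\frac{\Sigma}{K}
\]
a.e.\ in $\R^n$. In particular, as in the proof of Lemma \ref{non negative lemma}, at points where $J_f<0$ we get $J_f^{-}\le M^n\Sigma/K$, so $J_f^{-}\in L^1(\R^n)$ by the assumption $\Sigma/K\in L^1(\R^n)$. Once we also know $\abs{Df}^n/K\in L^1(\R^n)$, Lemma \ref{lem: J_f=0} applies directly and yields $J_f\in L^1(\R^n)$ and $\int_{\R^n}J_f=0$. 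So the whole proof reduces to showing $\abs{Df}^n/K\in L^1(\R^n)$.

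For this, I would rerun the cutoff argument from the proof of Lemma \ref{lem: J_f=0}, but now absorbing terms. Take $\eta_r$ as there. Multiplying the pointwise inequality above by $\eta_r^n$ and integrating gives
\[
    \int \eta_r^n \frac{\abs{Df}^n}{K} \le \int\eta_r^n J_f + M^n\norm{\Sigma/K}_{L^1(\R^n)}.
\]
Apply the Caccioppoli inequality of Lemma \ref{lem: Caccioppoli ineq} to $f-y_0$ (which has the same Jacobian) and H\"older exactly as in \eqref{ineq: eta_rJ_f}, keeping $\eta_r^{n-1}$ with $\abs{Df}^{n-1}$:
\[
    \abs{\int\eta_r^nJ_f} \le C(n) M\left(\int\eta_r^n\frac{\abs{Df}^n}{K}\right)^{\frac{n-1}{n}}\left(r^{-n}\int_{B(0,2r)}K^{n-1}\right)^{\frac1n}\le C(n,M,\mathcal{K}_{n-1})\, I_r^{\frac{n-1}{n}},
\]
where $I_r=\int\eta_r^n\abs{Df}^n/K$ and we used $\mathcal{K}_{n-1}<\infty$ for $r\ge1$. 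Hence $I_r\le C I_r^{(n-1)/n}+C'$, and since $I_r<\infty$ (as $f\in W^{1,n}_\loc$, $K\ge1$, $\eta_r$ compactly supported), Young's inequality gives $I_r\le C''$ independently of $r$. Monotone convergence as $r\to\infty$ then shows $\abs{Df}^n/K\in L^1(\R^n)$.

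The only delicate point is this absorption step: one must make sure $I_r$ is finite before absorbing (which holds since $K\ge1$ makes $\abs{Df}^n/K\le\abs{Df}^n$ locally integrable), and that the constant from $\dashint_{B(0,2r)}K^{n-1}$ is uniform in $r$, which is exactly what $\mathcal{K}_{n-1}<\infty$ provides. After that, Lemma \ref{lem: J_f=0} finishes the proof, with no continuity or degree theory needed.
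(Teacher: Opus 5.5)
Your proposal is correct and follows essentially the paper's own proof. A cutoff/Caccioppoli/H\"older estimate using $\mathcal{K}_{n-1}<\infty$ gives an $r$-independent bound on $\int\eta_r^n\abs{Df}^n/K$, the pointwise bound $J_f^-\le\abs{f-y_0}^n\Sigma/K$ gives $J_f^-\in L^1(\R^n)$, and Lemma \ref{lem: J_f=0} then finishes. Your explicit check that $I_r<\infty$ before absorbing is a detail the paper leaves implicit.
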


\begin{proof}
    We again fix cutoff functions $\eta_r\in C^\infty(\R^n,[0,1])$ such that $\eta_r|_{B^n(0,r)}\equiv1$, $\eta_r|_{\R^n\setminus B^n(0,2r)}\equiv 0$, and $|\nabla\eta_r|\le 2/r$.  Now, by using \eqref{K,Sigma-quasiregular}, the Caccioppoli inequality from Lemma \ref{lem: Caccioppoli ineq}, and H\"older's inequality, we obtain
    \begin{multline}\label{ineq: eta_r|Df|^n/K}
        \int_{\R^n}\eta^n_r\frac{|Df|^n}{K}\le\int_{\R^n}\eta^n_rJ_f+\int_{\R^n}\eta_r^n|f-y_0|^n\frac{\Sigma}{K}\\
        \le n\int_{\R^n}(\eta_r|Df|)^{n-1}|f||\nabla\eta_r|+\|f-y_0\|^n_{L^\infty(\R^n)}\int_{\R^n}\frac{\Sigma}{K}\\
        \le C\left[\left(\dashint_{B(0,2r)}K^{n-1}\right)^{\frac{1}{n}}\left(\int_{\R^n}\eta_r^n\frac{|Df|^n}{K}\right)^{\frac{n-1}{n}}+\int_{\R^n}\frac{\Sigma}{K}\right],
    \end{multline}
    where the constant $C$ depends only on $n$, $y_0$, and $\|f\|_{L^\infty(\R^n)}$. Therefore, since $\mathcal{K}_{n-1} < \infty$ and $\Sigma/K \in L^1(\R^n)$, the integral of $\eta^n_r|Df|^n/K$ over $\R^n$ admits a finite upper bound independent of $r$. Letting $r\to\infty$, we obtain via monotone convergence $|Df|^n/K\in L^1(\R^n)$.

    Next, since we have $|Df|^n/K+J^-_f\le J^+_f+(|f-y_0|^n\Sigma)/K$, it follows that the negative part $J_f^{-}$ satisfies a global integrability estimate.
    \begin{equation}\label{ineq: upper bound J_f^-}
        \int_{\R^n}J_f^-\le\int_{\R^n}|f-y_0|^n\frac{\Sigma}{K}
        \le \|f-y_0\|_{L^\infty(\R^n)}^n \int_{\R^n}\frac{\Sigma}{K}.
    \end{equation}
    Thus, $J_f^{-} \in L^1(\R^n)$. Now, the assumptions of Lemma \ref{lem: J_f=0} are satisfied: it follows that $J_f \in L^1(\R^n)$ and the integral of $J_f$ over $\R^n$ vanishes. Thus, the proof is complete.
\end{proof}

Theorem \ref{thm: Liouville FDM case} in fact follows immediately from Lemma \ref{lem: J_f L^1 J_f=0}.

\begin{proof}[Proof of Theorem \ref{thm: Liouville FDM case}]
    Let $\Sigma\equiv 0$, $p=n-1$, and $q=\infty$. Then Lemma \ref{lem: J_f L^1 J_f=0} implies that, for any bounded finite distortion mapping $f \in W^{1,n}_\loc(\R^n, \R^n)$, the integral of $J_f$ over $\R^n$ vanishes. Consequently, since $J_f$ is non-negative, it vanishes a.e.\ on $\R^n$. Thus, by the distortion inequality, $\abs{Df}^n$ vanishes a.e.\ on $\R^n$, and therefore $f$ is constant.
\end{proof}

One of the conclusions of Lemma \ref{lem: J_f L^1 J_f=0} is that the integral of $J_f$ over $\R^n$ vanishes. We now proceed to improve this by showing that the integral of $J_f$ also vanishes over every strict sub-level set of $|f-y_0|$.

\begin{lem}\label{lem: J_f=0 on sub level set}
    Let $y_0\in\mathbb{R}^n$. Suppose that $f\in W^{1,n}_{\rm loc}(\R^n,\mathbb{R}^n)$ has a value of finite distortion at $y_0\in\mathbb{R}^n$ with data $(K,\Sigma)$, where $K \colon \R^n\to[1,\infty)$ and $\Sigma \colon \R^n\to[0,\infty)$ are measurable functions such that 
    \begin{equation*}
        \mathcal{K}_{n-1} < \infty
        \quad{\rm and}\quad 
        \frac{\Sigma}{K} \in L^1(\R^n).
    \end{equation*}
    If $f$ is bounded, then for every $t>0$, we have
    \begin{equation*}
        \int_{\{x\in\R^n : |f-y_0|<t\}}J_f=0.
    \end{equation*}
\end{lem}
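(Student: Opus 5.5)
The plan is to apply Lemma \ref{lem: J_f L^1 J_f=0} not to $f$ itself but to truncated post-compositions of $f$. For fixed $t>0$, define the radial map $\psi_t\colon\R^n\to\R^n$ by $\psi_t(y)=y$ for $|y-y_0|\le t$ and $\psi_t(y)=y_0+t\frac{y-y_0}{|y-y_0|}$ otherwise. It is Lipschitz, so $h_t:=\psi_t\circ f\in W^{1,n}_\loc(\R^n,\R^n)$ and $h_t$ is bounded. On $\{|f-y_0|<t\}$ we have $Dh_t=Df$ a.e. On $\{|f-y_0|>t\}$ the map $h_t$ takes values in the sphere $\partial\B^n(y_0,t)$, so $J_{h_t}=0$ there. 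On the level set $\{|f-y_0|=t\}$ we have $\nabla|f-y_0|=0$ a.e., and there $Dh_t$ has rank at most $n-1$, so $J_{h_t}=0$ a.e. as well. Therefore
\[
J_{h_t}=J_f\,\mathcal{X}_{\{|f-y_0|<t\}}\quad\text{a.e.},
\]
and it suffices to show $\int_{\R^n}J_{h_t}=0$.

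To get this from Lemma \ref{lem: J_f=0} (or Lemma \ref{lem: J_f L^1 J_f=0}), I would verify that $h_t$ has a value of finite distortion at $y_0$ with data $(K,\Sigma_t)$, where $\Sigma_t/K\in L^1(\R^n)$. On $\{|f-y_0|<t\}$ this is inherited from $f$ with $\Sigma_t=\Sigma$. On the complement $J_{h_t}=0$, so I need $|Dh_t|^n\le \Sigma_t\, t^n$. Using $|Dh_t|\le t|Df|/|f-y_0|\le|Df|$, I would set $\Sigma_t:=t^{-n}|Df|^n\mathcal{X}_{\{|f-y_0|\ge t\}}$. Then $\Sigma_t/K=t^{-n}\frac{|Df|^n}{K}\mathcal{X}_{\{|f-y_0|\ge t\}}$, which lies in $L^1(\R^n)$ because Lemma \ref{lem: J_f L^1 J_f=0} applied to $f$ already gives $|Df|^n/K\in L^1(\R^n)$. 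Since $h_t$ is bounded and $\mathcal K_{n-1}<\infty$, Lemma \ref{lem: J_f L^1 J_f=0} applied to $h_t$ yields $\int_{\R^n}J_{h_t}=0$, which is the claim.

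Alternatively, one can apply Lemma \ref{lem: J_f=0} directly. Here $J_{h_t}^-\le J_f^-\in L^1$, and $|Dh_t|^n/K\le|Df|^n/K\in L^1$, both by Lemma \ref{lem: J_f L^1 J_f=0}. This route is cleaner and avoids needing the distortion inequality for $h_t$ at all.

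The main obstacle I expect is the rigorous chain rule for $\psi_t\circ f$, in particular establishing $J_{h_t}=0$ a.e. on $\{|f-y_0|\ge t\}$, including the level set. I would handle this by writing $\psi_t=y_0+\rho_t\cdot\pi$, with $\pi$ the radial projection away from $y_0$. Then on $\{|f-y_0|>t\}$ the differential $Dh_t$ factors through the tangent space of the sphere and has rank at most $n-1$. On the level set, the standard fact that $\nabla|f-y_0|=0$ a.e. shows that $Df$ maps into the tangent space there, so $Dh_t=Df$ restricted to that space, and again $J_{h_t}=0=J_f$ a.e. on the level set. Either way the identity $J_{h_t}=J_f\mathcal{X}_{\{|f-y_0|<t\}}$ holds, and the conclusion follows.
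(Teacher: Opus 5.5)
Your proposal is correct, and the route you call the alternative is exactly the paper's proof. The paper composes $f-y_0$ with the $1$-Lipschitz radial retraction onto $\overline{\B}(0,t)$, uses $|Df_t|\le|Df|$ and $J_{f_t}=J_f\mathcal{X}_{\{|f-y_0|<t\}}$ a.e., and feeds the integrability from Lemma \ref{lem: J_f L^1 J_f=0} into Lemma \ref{lem: J_f=0}. Your first route, via the modified defect datum $\Sigma\mathcal{X}_{\{|f-y_0|<t\}}+t^{-n}|Df|^n\mathcal{X}_{\{|f-y_0|\ge t\}}$, also works but is redundant, and your explicit argument that $J_{h_t}=0$ a.e.\ on $\{|f-y_0|\ge t\}$ fills in a step the paper only asserts.
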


\begin{proof}
    Let $t>0$, and let $\psi=\psi_{t}:[0,\infty)\to[0,\infty)$ be the non-decreasing 1-Lipschitz function given by $\psi|_{[0,t]}={\rm id}$ and $\psi|_{[t,\infty)}\equiv t$. 
    Let $h_{t}:\R^n\to\R^n$ be the radial function defined by
    \begin{equation*}
        h_{t}(x)=\psi_{t}(|x|)\frac{x}{|x|},
    \end{equation*}
    noting that $h_{t}$ is 1-Lipschitz. 
    
    Now, let $f_{t}=h_{t}\circ (f-y_0)$. By the chain rule of Lipschitz and Sobolev maps, we have $f_{t}\in W^{1,n}_{\rm loc}(\R^n,\R^n)$, along with $\abs{Df_{t}(x)} \le \abs{Df(x)}$ and $\abs{J_{f_{t}}(x)} \le \abs{J_{f}(x)}$ for a.e.\ $x \in \R^n$; see e.g. \cite[Corollary 3.2]{Ambrosio-dalMaso}. Consequently, since $\abs{Df}^n / K \in L^1(\R^n)$ and $J_f \in L^1(\R^n)$ by Lemma \ref{lem: J_f L^1 J_f=0}, we also have $|Df_t|^n/K\in L^1(\mathbb{R}^n)$ and $J_{f_t} \in L^1(\R^n)$. Since also $\|f_t\|_{L^\infty(\mathbb{R}^n)}\le t < \infty$, we may thus use Lemma \ref{lem: J_f=0} on $f_t$, obtaining that
    \begin{equation*}
        \int_{\{x\in\mathbb{R}^n:\,|f-y_0|<t\}}J_f
        =\int_{\mathbb{R}^n}J_{f_t}=0.
    \end{equation*}
    Thus, the proof is complete.
\end{proof}

We are now ready to prove Theorem \ref{thm:liouville}. The argument we use for this last step is reminiscent of the one used in \cite[Proof of Proposition 1.4]{heikkila2025quasiregular}.

\begin{proof}[Proof of Theorem \ref{thm:liouville}] Suppose  towards contradiction that $f\not\equiv y_0$ and $y_0=f(x_0)$ for some $x_0\in \R^n$. By Theorem \ref{open and discrete thm}, we obtain that $f^{-1}\{y_0\}$ is discrete, and $i(x,f)$ is a positive integer for all $x\in f^{-1}\{y_0\}$. Hence, we may select an open bounded neighborhood $U_0$ of $x_0$ such that $\overline{U_0}\cap f^{-1}\{y_0\}=\{x_0\}$. Next, there exists some small $r_0>0$ such that for all $0<r<r_0$, $\B(y_0,r)\cap f(\partial U_0)=\emptyset$.
By the definitions of the degree and local index that we recalled in Section \ref{sec: degree and index}, we have 
\begin{equation}
    \int_{U_r}J_f=\deg(f,U_r)m_n(\B(y_0,r))=i(x_0,f)\, \omega_n r^n,
\end{equation}
where $U_r:=U_0\cap\{x\in\R^n: |f-y_0|<r\}$ and $\omega_n$ is the volume of the unit ball in $\R^n$. On the other hand, by Lemma \ref{lem: J_f=0 on sub level set}, we obtain that for every $r>0$, the integral of $J_f$ over the sub-level set $\{x\in\R^n: |f-y_0|<r\}$ vanishes. It follows that we have
\begin{equation}
    \int_{U_r}J_f=-\int_{V_r}J_f\le\int_{V_r}J_f^-,
\end{equation}
where $V_r:=\{x\in\R^n: |f-y_0|<r\}\setminus U_0$ and $J_f^-$ again denotes the negative part of the Jacobian determinant $J_f$.

To further estimate this integral, we note that the value of finite distortion at $y_0$ implies that $J_f^-\le |f-y_0|^n\Sigma/K$ a.e. in $\R^n$. Hence, we obtain
\begin{equation}
    \int_{V_r}J_f^{-}\le\int_{V_r}\frac{|f-y_0|^n\Sigma}{K}\le r^n\int_{\{x\in\R^n: |f-y_0|<r\}}\frac{\Sigma}{K}
\end{equation}
since $V_r\subset\{x\in\R^n: |f-y_0|<r\}$. In conclusion, we have that
\begin{equation}
    i(x_0,f)\,\omega_n\le \int_{\{x\in\R^n: |f-y_0|<r\}}\frac{\Sigma}{K}.
\end{equation}

But now, since $\Sigma/K \in L^1(\R^n)$, the convergence properties of the Lebesgue integral for decreasing sequences of sets yield that
\begin{equation}
    i(x_0,f)\,\omega_n
    \le \limsup_{r \to 0} \int_{\{x\in\R^n: |f-y_0|<r\}}\frac{\Sigma}{K}
    = \int_{f^{-1}\{y_0\}} \frac{\Sigma}{K}.
\end{equation}
Since $f^{-1}\{y_0\}$ is discrete, it is a null-set. Thus, we have $i(x_0,f)\,\omega_n\le 0$. This contradicts the fact that $i(x_0,f) >0$, and the proof is hence complete.
\end{proof}

\section{Counterexamples}

We begin by outlining a counterexample to Theorem \ref{open and discrete thm} in the case $K \in L^{n-1}_\loc(\Omega)$ when $n = 2$.

\begin{ex}\label{ex:planar_counterexample}
    We recall that in \cite[Theorem 1.5]{dolevzalova2024mappings}, for some value of $r > 0$, a mapping $h \colon \B^2(0,r)\setminus\{0\} \to \C$ was constructed with the following properties: $h \in W^{1,2}(\B^2(0, r), \C)$, $\abs{Dh}^2 \le K J_h + \Sigma$ with
    \[
        K \in L^1(\B^2(0,r), [1, \infty)) \qquad \text{and} \qquad \Sigma \in L^\infty(\B^2(0, r)), 
    \]
    and $h$ is of the form
    \[
        h(z) = \abs{z} + i \varphi(z)
    \]
    where $\varphi \colon \B^2(0, r) \setminus \{0\} \to [0, \infty)$ is continuous with $\lim_{z \to 0} \varphi(z) = \infty$. We may assume that $r < \pi$ by restricting $h$ if necessary.

    We consider the mapping $f \colon \B^2(0,r) \to \C$ defined by $f(0) = 0$ and
    \[
        f(z) = e^{ih(z)}, \qquad \text{ for } z \in \B^2(0, r) \setminus \{0\}.
    \]
    Since the complex exponential is $1$-Lipschitz on $(-\infty, 0] \times \R$, which contains the image of $ih$, it follows that $f \in W^{1,2}(\B^2(0, r), \C)$. Moreover, by the conformality of the complex exponential, we have $\abs{Df(z)} = \abs{f(z)} \abs{Dh(z)}$ and $J_{f}(z) = \abs{f(z)}^2 J_h(z)$; thus, we have
    \[
        \abs{Df(z)}^2 \le K(z) J_f(z) + \abs{f(z)}^2 \Sigma(z)
    \]
    for a.e.\ $z \in \B^2(0, r)$, where $K \in L^1(\B^2(0, r))$ and $\Sigma \in L^\infty(\B^2(0,r))$. However, since $0 \le \Im(ih) < \pi$ for all $z \in \B^2(0, r)$, the image of $f$ is contained in the closed upper half-plane; since $f(0) = 0$, the map $f$ hence cannot be sense-preserving at $0$, and does not map $\B^2(0, r)$ into a neighborhood of $0$.
\end{ex}

We then provide a counterexample to Theorem \ref{thm:liouville} when the assumption $\mathcal{K}_{n-1} < \infty$ is weakened to $\mathcal{K}_{\beta} < \infty$ for some $\beta < n-1$. This counterexample in fact has $\Sigma \equiv 0$, i.e.\ it is a mapping of finite distortion. The philosophy behind the counterexample bears some resemblance to Ball's counterexample to the openness and discreteness of mappings of finite distortion, see \cite{ball1976convexity, ball1981global} and e.g.\ \cite[Example 3.3]{HenclKoskela_book}.

\begin{example}\label{exam: K_p is finite}
    We begin by constructing an example $f \colon \R^n \to \R^n$ whose first coordinate map $f_1$ is bounded from one side. For this, given $0 < \beta < n-1$, we fix an $\alpha$ with
    \[
        \frac{1}{2} < \alpha < \min \left( 1, \frac{n-1}{2\beta} \right),
    \]
    and define $f \colon \R^n \to \R^n$ by
    \begin{equation}\label{defn: counterexample f}
        f(x_1,...,x_n)=
        \left(|x|\left(1+\frac{x_1}{|x|}
            \right)^\alpha,x_2,...,x_n\right).
    \end{equation}
    A direct computation yields that
    \begin{equation}\label{defn: counterexample Df}
        Df(x)=\left[\left(1+\frac{x_1}{|x|}\right)^{\alpha-1} 
        \left(\alpha e_1+\left(1+(1-\alpha)
            \frac{x_1}{|x|}\right)\frac{x}{|x|}\right),
        e_2,...,e_n\right].
    \end{equation}
    Consequently, we have 
    \begin{equation*}
        J_f=\left(\alpha+\left(1+(1-\alpha)\frac{x_1}{|x|}\right)\frac{x_1}{|x|}\right)\left(1+\frac{x_1}{|x|}\right)^{\alpha-1},
    \end{equation*}
    and the Hilbert-Schmidt norm of $Df$ satisfies
    \begin{equation*}
        \begin{aligned}
        \norm{Df(x)}^2 = (n-1)
        &+\left(\alpha^2+\left(1+(1-\alpha)\frac{x_1}{|x|}\right)^2\right)\left(1+\frac{x_1}{|x|}\right)^{2\alpha-2}\\
        &+2\alpha\frac{x_1}{|x|}\left(1+(1-\alpha)\frac{x_1}{|x|}\right)\left(1+\frac{x_1}{|x|}\right)^{2\alpha-2}.
        \end{aligned}
    \end{equation*}
    We then adopt a coordinate system where $x_1 = r \cos \varphi$ and $(x_2, \dots, x_n) = \sin(\varphi) \theta$, where $(r, \varphi, \theta) \in [0, \infty) \times [0, 2\pi) \times \S^{n-2}$. We then observe that $J_f$ and $\norm{Df}$ only depend on $\varphi$. More precisely,
    \begin{equation*}
        J_f(r,\varphi,\theta)=\left(\alpha+\cos\varphi+(1-\alpha)\cos^2\varphi\right)\left(1+\cos\varphi\right)^{\alpha-1}
    \end{equation*}
    and 
     \begin{equation*}
        \begin{aligned}
        \norm{Df(r, \varphi, \theta)}^2&=(n-1)
        +\left((1+\cos\varphi)^2
        +\alpha^2(1-\cos^2\varphi)\right)\left(1+\cos\varphi\right)^{2\alpha-2}\\
        &=(n-1)+\left((1+\cos\varphi)
        +\alpha^2(1-\cos\varphi)\right)\left(1+\cos\varphi\right)^{2\alpha-1}.
        \end{aligned}
    \end{equation*}
    
    Next, we estimate $J_f(r,\varphi,\theta)$ and $\norm{Df(r,\varphi,\theta)}^2$. For $J_f(r,\varphi,\theta)$, we consider the function $F \colon [0, 2\pi] \to \R$ given by
    \begin{equation*}
        F(\varphi) = \alpha+\cos\varphi+(1-\alpha)\cos^2\varphi = (1+\cos\varphi)+(1-\alpha)\sin^2\varphi.
    \end{equation*}
    From the latter expression, we observe that $F(\varphi) \ge 0$, and $F(\varphi) = 0$ if and only $\varphi=\pi$. Consequently,  $J_f(r,\varphi,\theta) \ne 0$ when $\varphi \ne \pi$. Moreover, using the elementary estimates
    \begin{equation}\label{eq:sine_cosine_asymptotics}
        0 \le \sin^2\varphi \le C |\theta-\pi|^2
        \quad{\rm and}\quad
        C^{-1} |\theta-\pi|^2 \le 1+\cos\varphi
        \le C |\theta-\pi|^2
    \end{equation}
    for $\varphi \in [0, 2\pi]$, we have 
    \begin{equation*}
        J_f(r,\varphi,\theta) \approx |\varphi-\pi|^{2\alpha}, \quad\text{for }  \varphi\in [0, 2\pi],
    \end{equation*}
    with the comparison constants depending only on $\alpha$.
    
    For $\norm{Df(r,\varphi,\theta)}^2$, observe that
    \begin{equation*}
        2\alpha^2 \le (1+\cos\varphi)
        +\alpha^2(1-\cos\varphi) = 1 + \alpha^2 + (1 - \alpha^2) \cos \varphi \le 2.
    \end{equation*}
    Thus, this term is bounded from above and below by positive constants dependent only on $\alpha$. Therefore, by again using \eqref{eq:sine_cosine_asymptotics}, we obtain 
    \begin{equation*}
        \norm{Df(r,\varphi,\theta)}^2 \approx (n-1)+|\varphi-\pi|^{4\alpha-2},\quad\text{for }  \varphi\in [0, 2\pi],
    \end{equation*}
    with the comparison constants depending only on $\alpha$. Since $\alpha > 1/2$, we have $4\alpha - 2 > 0$; thus, we in fact have
    \[
        C^{-1}(n, \alpha) \le \norm{Df(r,\varphi,\theta)}^2 \le C(n, \alpha),
    \]
    which proves that $f$ is Lipschitz, and hence in $W^{1,n}_\loc(\R^n, \R^n)$.
    
    Note that $\abs{Df}$ and $\norm{Df}$ are uniformly comparable with a dimensional constant. Thus, in view of the above estimates for $J_f(\varphi)$ and $\norm{Df(\varphi)}^2$, the distortion $K_f:=|Df|^n/J_f$ satisfies
    \begin{equation}\label{ineq: K varphi}
        K_f(r,\varphi,\theta) \le C(n, \alpha) |\varphi-\pi|^{-2\alpha}
        \quad \text{for all } \varphi \in [0, 2\pi].
    \end{equation}
    
    We now claim that the average integral of $K_f^\beta$ on  balls is finite. Indeed, in our coordinate system $x = (r\cos \varphi, \sin(\varphi)\theta)$, the volume element is $dm_n(x) = r^{n-1} \sin^{n-2} (\varphi) \, dr \, d\varphi \, d\cH^{n-2}(\theta)$. Thus, using \eqref{ineq: K varphi}, we have
    \begin{align*}
        &\dashint_{\B^n(0,R)} K_f^\beta(x)\,dm_n(x)\\
        &\quad\le \frac{C(n, \alpha)}{m_n(\B^n(0,R))}
        \int_{\S^{n-2}} \int_0^R \int_0^{2\pi} \frac{r^{n-1} \sin^{n-2}(\varphi)}{|\varphi-\pi|^{2\alpha\beta}} \, d\varphi dr d\cH^{n-2}(\theta)\\
        &\quad\le C(n, \alpha) \int_0^{2\pi} \frac{\sin^{n-2}(\varphi)}{|\varphi-\pi|^{2\alpha\beta}} \, d\varphi.
    \end{align*}
    Combining this with the upper bound in \eqref{eq:sine_cosine_asymptotics}, it suffices to show that
    \begin{equation}\label{ineq: final estimate of K^p}
        \int_0^{2\pi} |\varphi-\pi|^{n-2-2\beta\alpha} \,d\varphi<\infty,
    \end{equation}
    which holds if and only if $\alpha < (n-1)/2\beta$. As this was one of our assumptions, the integral in \eqref{ineq: final estimate of K^p} is indeed finite. Thus, our $f$ is a continuous mapping of finite distortion in $W^{1,n}_\loc(\R^n, \R^n)$ with $\mathcal{K}_\beta < \infty$, but $f_1(x) \ge 0$ for all $x \in \R^n$.
    
    Finally, using $f$, we construct a non-constant bounded mapping $h\in W^{1,n}_{\rm loc}(\mathbb{R}^n,\mathbb{R}^n)$ of finite distortion such that $\mathcal{K}_\beta < \infty$. For this, let $\tau$ be a M\"obius transformation that maps $\mathbb{H}^n$ onto $\mathbb{B}^n$, noting that $\tau$ is conformal. Then the composition $h:=\tau\circ f:\mathbb{R}^n\to\mathbb{R}^n$ is bounded. Moreover, $f$ is Lipschitz, and $\tau$ is Lipschitz on $\H^n$. Thus, by the chain rule for Lipschitz maps, we have that $h \in W^{1,n}_\loc(\R^n,\R^n)$ and  $Dh(x)=D\tau(f(x))Df(x)$ for a.e. $x \in \R^n$; see again e.g. \cite[Corollary 3.2]{Ambrosio-dalMaso}. In particular,
    \begin{equation*}
        \begin{aligned}
        |Dh(x)|^n&\le |D\tau(f(x))|^n|Df(x)|^n\\
        &\le J_\tau(f(x)) K_f(x) J_f(x)=K_f(x)J_{h}(x)
        \end{aligned}
    \end{equation*}
    for a.e. $x\in\R^n$. Therefore, $h$ is a mapping of finite distortion with distortion $K_h=K_f$, and hence $\mathcal{K}_\beta < \infty$ for $h$, completing the example.
 \end{example}

\subsection*{Acknowledgments}

We thank Kai Rajala for a useful comment regarding the history of the Liouville theorem for mappings of finite distortion.

\bibliographystyle{plain}
\bibliography{Reshetnyak.bib}
\end{document}